\definecolor{winered}{rgb}{0.5,0,0}
\title{Entropic transfer operators}
\author{Oliver Junge$^\dagger$, Daniel Matthes$^\dagger$}
\address{$^\dagger$Department of Mathematics, TUM School of Computation, Information and Technology, Technical University of Munich, Germany}
\author{Bernhard Schmitzer$^\ddagger$}
\address{$^\ddagger$Institute of Computer Science, University of Göttingen, Germany}
\email{oliver.junge@tum.de, matthes@ma.tum.de, schmitzer@cs.uni-goettingen.de}
\date{\today}
\newcommand{\Z}{{\mathbb Z}}
\newcommand{\R}{{\mathbb R}}
\newcommand{\C}{{\mathbb C}}
\newcommand{\Rnn}{{\mathbb R}_{\ge0}}
\newcommand{\eps}{\varepsilon}
\newcommand{\cX}{\mathcal{X}}
\newcommand{\cY}{\mathcal{Y}}
\newcommand{\cA}{\mathcal{A}}
\newcommand{\setN}{\mathcal{N}}
\newcommand{\setM}{\mathcal{M}}
\newcommand{\bP}{\mathbf{P}}
\newcommand{\dd}{\;\mathrm{d}}
\newcommand{\dn}{\mathrm{d}}
\newcommand{\prb}{\mathcal{P}}
\newcommand{\opt}{\text{opt}}
\newcommand{\ent}{\mathbf{H}}
\newcommand{\id}{\operatorname{id}}
\newcommand{\COT}{\mathcal{C}}
\newcommand{\rightweaks}{\stackrel{\ast}{\rightarrow}}
\newcommand{\cub}{{\mathbf C}}
\newcommand{\dst}{\mathrm{d}}
\newcommand{\opn}[1]{\left\|#1\right\|_\text{op}}
\newtheorem{theorem}{Theorem}
\newtheorem{lemma}{Lemma}
\newtheorem{proposition}{Proposition}
\newtheorem{remark}{Remark}
\begin{document}

\begin{abstract}
We propose a new concept for the regularization and discretization of transfer and Koopman operators in dynamical systems.  Our approach is based on the entropically regularized optimal transport between two probability measures.  In particular, we use optimal transport plans in order to construct a finite-dimensional approximation of some transfer or Koopman operator which can be analysed computationally.  We prove that the spectrum of the discretized operator converges to the one of the regularized original operator, give a detailed analysis of the relation between the discretized and the original peripheral spectrum for a rotation map on the $n$-torus and provide code for three numerical experiments, including one based on the raw trajectory data of a small biomolecule from which its dominant conformations are recovered.  
\end{abstract}

\maketitle

\section{Introduction}
\subsection{Context and motivation}

A time-discrete dynamical system can be described by a state space $\cX$ and an evolution map $F:\cX \to \cX$ that specifies how points move from one time instance $t$ to the next: $x_{t+1}=F(x_t)$. In the case of an underlying (ordinary) differential equation, $F=F^{\tau}$ can be chosen to be the flow map for some fixed time step $\tau>0$. A classical goal in dynamical systems theory is then to characterize how the trajectory $(x_t)_t$ of some initial state $x_0$ behaves, e.g.~whether it is (almost) periodic or remains in some subset of $\cX$ for a long time.

However, in complicated systems (i.e.~chaotic or high-dimensional) typical individual trajectories can often not be described in simple geometric terms.  In chaotic systems, a trajectory exhibits \emph{sensitive dependence} on its initial point, and thus the trajectory of a single initial point is of little descriptive value.  In this case, it is more appropriate to describe the system in a statistical way: Consider an $\cX$-valued random variable $X$ with distribution $\mu$, then the distribution of $F(X)$ is given by the \emph{push forward}\footnote{with $F^{-1}(A)=\{x\in\cX:F(x)\in A\}$ denoting the \emph{preimage} of some (measurable) set $A$} $\mu\circ F^{-1}$ of  $\mu$ under the map $F$.  So instead of the nonlinear map $F$ on $\cX$ we use the linear map $F_\#:\mu\mapsto\mu\circ F^{-1}$ on probability measures over $\cX$, called the \emph{transfer} (or \emph{Frobenius-Perron}) \emph{operator} in order to describe the dynamics.  Relatedly, one may consider how some continuous \emph{observable} $\varphi:\cX\to\R$ is evolved through $F$, i.e.~$\varphi(x_{t+1})=\varphi(F(x_t))$, which yields the linear operator $K:\varphi \mapsto \varphi \circ F$, the \emph{Koopman operator}. This statistical point of view has been developed in ergodic theory, cf. \cite{Ko:31,vN:32,lasota2013chaos}.

Since these two operators are linear, tools from linear functional analysis can in principle be applied in order to analyse them. However, the infinite dimensionality of the underlying function spaces and the non-compactness of the operators make a numerical approximation non-trivial.
As a remedy, regularized versions of the operators can be considered which, e.g., result from (small) random perturbations of $F$ \cite{Ki:88, lasota2013chaos}.  In a suitable functional analytic setting, these regularized operators are compact, therefore have a discrete spectrum, and can be approximated numerically by a Galerkin method \cite{ding1999projection,DeJu99}. 

The resulting matrix gives a simple means for a forward prediction of the dynamics by mere matrix-vector multiplication \cite{lasota2013chaos,BuMoMe:12}.
It can also be used in order to construct a reduced predictive model of the system \cite{crommelin2006reconstruction,nuske2019coarse,klus2020data}.
Finally, the nodal domains of eigenvectors at eigenvalues of large magnitude can be used in order to decompose the state space into macroscopic (almost-)cyclic or almost-invariant (i.e.~metastable) components, yielding a coarse-grained model of the system  \cite{DeJu99}.

This approach works well if the (Hausdorff) dimension of $\cX$ is small, i.e.\ $\leq 3$.  Then, standard spaces can be used for the Galerkin approximation, e.g., piecewise constant functions, resulting in \emph{Ulam's method} \cite{Ulam60,li1976finite}.  In the case of a high dimensional $\cX$, but a low dimensional attractor/invariant set, set oriented methods \cite{DeHo97,DeJu99} can be employed  in order to restrict the computations to the invariant set.  Recently, approximation methods have been proposed which are suited for problems with dynamics on a high dimensional invariant set.  \emph{(Extended) dynamic mode decomposition} \cite{rowley2009spectral,williams2015data,williams2016kernel} is a Petrov-Galerkin approximation of the Koopman operator with different types of ansatz functions and point evaluations as test functions \cite{klus2016numerical}.

\subsection{Contribution and outline}

The contribution of this paper is the construction of an approximation method for the transfer and via its adjoint also for the Koopman operator which also works for dynamics on high dimensional invariant sets and only uses (trajectory) data. It combines a regularization of the transfer operator through an entropically regularized optimal transport plan \cite{Villani-OptimalTransport-09,Santambrogio-OTAM,PeyreCuturiCompOT} with a discretization through the restriction to point masses.  It bears similarities with the construction using Gaussian kernels in \cite{KoltaiRenger2018,froyland2021spectral}. In contrast to this and (extended) dynamic mode decomposition, though, our approach inherits important structural properties of the original operator: Our continuous and our discrete regularized operators are Markov, and they preserve the invariant measure (in the discrete setting: the uniform measure on the sample points). 

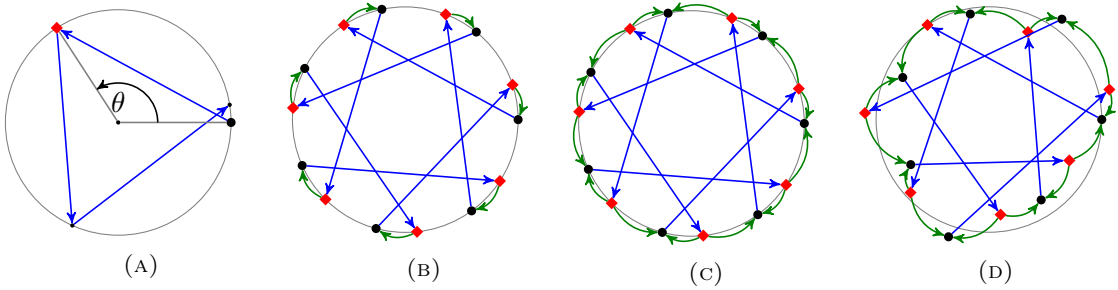
\begin{figure}[ht]
\centering
\begin{subfigure}[c]{0.24\textwidth}
    \begin{tikzpicture}[
    	scale=0.75, ->, 
    	>=stealth', 
    	auto, 
    	point/.style = {draw, circle,  fill = black, inner sep = 1.0pt},
    	imgpoint/.style = {draw=red, diamond,  fill = red, inner sep = 1.0pt},
    	dot/.style = {draw, circle,  fill = black, inner sep = 0.1pt},
    	semithick]
    
    	\def\rad{2}
    
    	\draw[gray,thin] (0,0) circle (\rad); 
    	\node (0) at +(0:0) [dot] {};
    	\node (1) at +(0:\rad) [point] {};
    	\node (2) at +(123:\rad) [imgpoint] {};
    	\node (3) at +(246:\rad) [dot] {};
    	\node (4) at +(369:\rad) [dot] {};
    	\path (1) edge [blue] (2)
	          (2) edge [blue] (3)
	          (3) edge [blue] (4);
	    \draw[gray,-] (0) -- (2);      
	    \draw[gray,-] (0) -- (1); 
	    \draw (0,0) +(0:0.7) arc (0:123:.7);  
	    \node [above] at (0) {$\theta$};   
     
    \end{tikzpicture}
    \caption{}
    \end{subfigure}
\begin{subfigure}[c]{0.24\textwidth}
    \begin{tikzpicture}[
    	scale=0.75, ->, 
    	>=stealth', 
    	auto, 
    	point/.style = {draw, circle,  fill = black, inner sep = 1.0pt},
    	imgpoint/.style = {draw=red, diamond,  fill = red, inner sep = 1.0pt},
    	dot/.style = {draw, circle,  fill = black, inner sep = 0.1pt},
    	semithick]
    
    	\def\rad{2}
    
   	    \draw[gray,thin] (0,0) circle (\rad); 
    	\foreach \i in {0,...,6}
		{
    		\node[point] (\i) at (\i*51:\rad) {};
	   		\node[imgpoint] (\i+7) at (\i*51+123:\rad) {};
		}
   		\path (0) edge [blue] (0+7);
   		\path (0+7) edge [green!50!black, bend left] (2);
   		\path (1) edge [blue] (1+7);
   		\path (1+7) edge [green!50!black, bend left] (3);
   		\path (2) edge [blue] (2+7);
   		\path (2+7) edge [green!50!black, bend left] (4);
   		\path (3) edge [blue] (3+7);
   		\path (3+7) edge [green!50!black, bend left] (5);
   		\path (4) edge [blue] (4+7);
   		\path (4+7) edge [green!50!black, bend left] (6);
   		\path (5) edge [blue] (5+7);
   		\path (5+7) edge [green!50!black, bend left] (0);
   		\path (6) edge [blue] (6+7);
   		\path (6+7) edge [green!50!black, bend left] (1);

    \end{tikzpicture}
    \caption{}
\end{subfigure}
\begin{subfigure}[c]{0.24\textwidth}
    \begin{tikzpicture}[
    	scale=0.75, ->, 
    	>=stealth', 
    	auto, 
    	point/.style = {draw, circle,  fill = black, inner sep = 1.0pt},
    	imgpoint/.style = {draw=red, diamond,  fill = red, inner sep = 1.0pt},
    	dot/.style = {draw, circle,  fill = black, inner sep = 0.1pt},
    	semithick]
    
    	\def\rad{2}
    
   	    \draw[gray,thin] (0,0) circle (\rad); 
    	\foreach \i in {0,...,6}
		{
    		\node[point] (\i) at (\i*51:\rad) {};
	   		\node[imgpoint] (\i+7) at (\i*51+123:\rad) {};
		}

   		\path (0) edge [blue] (0+7);
   		\path (1) edge [blue] (1+7);
   		\path (2) edge [blue] (2+7);
   		\path (3) edge [blue] (3+7);
   		\path (4) edge [blue] (4+7);
   		\path (5) edge [blue] (5+7);
   		\path (6) edge [blue] (6+7);

   		\path (0+7) edge [green!50!black, bend left] (2);
   		\path (0+7) edge [green!50!black, bend right] (3);
   		\path (1+7) edge [green!50!black, bend left] (3);
   		\path (1+7) edge [green!50!black, bend right] (4);
   		\path (2+7) edge [green!50!black, bend left] (4);
   		\path (2+7) edge [green!50!black, bend right] (5);
   		\path (3+7) edge [green!50!black, bend left] (5);
   		\path (3+7) edge [green!50!black, bend right] (6);
   		\path (4+7) edge [green!50!black, bend left] (6);
   		\path (4+7) edge [green!50!black, bend right] (0);
   		\path (5+7) edge [green!50!black, bend left] (0);
   		\path (5+7) edge [green!50!black, bend right] (1);
   		\path (6+7) edge [green!50!black, bend left] (1);
   		\path (6+7) edge [green!50!black, bend right] (2);
    \end{tikzpicture}
    \caption{}
\end{subfigure}
\hfill
\begin{subfigure}[c]{0.24\textwidth}
    \quad 
    \begin{tikzpicture}[
    	scale=0.75, ->, 
    	>=stealth', 
    	auto, 
    	point/.style = {draw, circle,  fill = black, inner sep = 1.0pt},
    	imgpoint/.style = {draw=red, diamond,  fill = red, inner sep = 1.0pt},
    	dot/.style = {draw, circle,  fill = black, inner sep = 0.1pt},
    	semithick]
    
    	\def\rad{2}
    
   	    \draw[gray,thin] (0,0) circle (\rad); 
    	\node[point] (0) at +(0:\rad) {};
    	\node[point] (1) at +(51+3:\rad+0.2) {};
    	\node[point] (2) at +(102-2:\rad-0.1) {};
    	\node[point] (3) at +(153+1:\rad-0.3) {};
    	\node[point] (4) at +(204+6:\rad-0.4) {};
    	\node[point] (5) at +(255-4:\rad+0.2) {};
    	\node[point] (6) at +(306-3:\rad-0.3) {};

    	\node[imgpoint] (7) at +(0+123:\rad) {};
    	\node[imgpoint] (8) at +(51+3+123:\rad+0.2) {};
    	\node[imgpoint] (9) at +(102-2+123:\rad-0.1) {};
    	\node[imgpoint] (10) at +(153+1+123:\rad-0.3) {};
    	\node[imgpoint] (11) at +(204+6+123:\rad-0.4) {};
    	\node[imgpoint] (12) at +(255-4+123:\rad+0.2) {};
    	\node[imgpoint] (13) at +(306-3+123:\rad-0.3) {};

   		\path (0) edge [blue] (7);
   		\path (1) edge [blue] (8);
   		\path (2) edge [blue] (9);
   		\path (3) edge [blue] (10);
   		\path (4) edge [blue] (11);
   		\path (5) edge [blue] (12);
   		\path (6) edge [blue] (13);

    	\path (7) edge [green!50!black, bend left] (2);
    	\path (7) edge [green!50!black, bend right] (3);
    	\path (8) edge [green!50!black, bend left] (3);
    	\path (8) edge [green!50!black, bend right] (4);
    	\path (9) edge [green!50!black, bend left] (4);
    	\path (9) edge [green!50!black, bend right] (5);
    	\path (10) edge [green!50!black, bend left] (5);
    	\path (10) edge [green!50!black, bend right] (6);
    	\path (11) edge [green!50!black, bend left] (6);
    	\path (11) edge [green!50!black, bend right] (0);
    	\path (12) edge [green!50!black, bend left] (0);
    	\path (12) edge [green!50!black, bend right] (1);
    	\path (13) edge [green!50!black, bend left] (1);
    	\path (13) edge [green!50!black, bend right] (2);

    \end{tikzpicture}
    \caption{}
    \end{subfigure}
\caption{(A) The unit circle $\cX=S^1$ with the shift map $F: \varphi \mapsto \varphi + \theta$ (blue) with $\theta \approx 2\pi/3$ is a simple dynamical system with an approximate 3-cycle.
(B) A discretization $\cX^N$ with 7 equidistant points (black) composed with a deterministic map (green) of $F(\cX^N)$ (red) back to $\cX^N$ yields a single cycle of length 7 that does not reflect the approximate 3-cycle.
(C) A composition with an entropic optimal transport map (green) yields a Markov chain on $\cX^N$  which inherits the almost 3-cycle (this fact is not deducible from this visualization).
(D) This still holds when $\cX^N$ does not exactly lie on the attractor/invariant set and when it is distributed at random.}
\label{fig:Intro}
\end{figure}

The approach presented here approximates the deterministic map  $F$ on $\cX$ by a stochastic model on a discrete set $\cX^N$, which is a finite collection of points from $\cX$.  By continuity of the  map $F$, it is reasonable to expect that macroscopic features are still observed by ``probing'' the dynamics only in $\cX^N$ (if this set represents $\cX$ sufficiently well).  However, it is not enough to simply consider the deterministic map $F$ restricted to $\cX^N$ since $\cX^N$ is usually not invariant under $F$.  A deterministic identification of points in $F(\cX^N)$ and $\cX^N$ bears the danger that the resulting dynamics is dominated by combinatorial discretization artifacts (see Figure \ref{fig:Intro} for an illustration and Section \ref{sec:TorusSpectrumDiscussion} for discussion).

Instead, we propose a stochastic identification of $F(\cX^N)$ and $\cX^N$ by means of an optimal transport plan with entropic regularization between distributions on $F(\cX^N)$ and $\cX^N$.  Under this plan, the masses of the image points in $F(\cX^N)$ are distributed such that each point in $\cX^N$ receives precisely a unit amount of mass.
The optimal entropic transport plan reflects spatial proximity, i.e., the mass of a point in $F(\cX^N)$ goes preferably to points in $\cX^N$ that are nearby, but it also introduces some stochastic blur on a length scale that depends on some regularization parameter $\eps$.

Our approximation of the transfer operator has several attractive properties. First, as already mentioned, it is a Markovian approximation, and it preserves the invariant measure. Second, the blur introduced by the entropic regularization masks dynamic phenomena below the length scale of $\eps^{1/2}$, and in particular can be used to remove discretization artefacts. By varying $\eps$, dynamical phenomena on specific scales can be studied. The blur further leads to compactness of the approximated transfer operator, and thus to a separation of its spectrum into a few ``large'' eigenvalues of modulus close to one, and the remaining ``small'' eigenvalues close to zero; the large eigenvalues relate to macroscopic dynamical features like cyclic or metastable behaviour, cf.\ \cite{DeJu99}. Finally, the Sinkhorn algorithm provides a tool to numerically solve the underlying regularized optimal transport problems efficiently.

In \cite{OTCoherentSet2021}, an approach for the detection of coherent sets in non-autonomous dynamical systems has been proposed which also uses the concept of optimal transport.  There, it is assumed that two measurements of $\mu$ and $F_{\#}\mu$ are given, $F$ is unknown and $\mu$ is not assumed to be an invariant measure of $F$. The transfer operator $F_\#$ is estimated by the optimal transport between $\mu$ and $F_{\#}\mu$, and approximately coherent sets are subsequently estimated. Entropic smoothing provides a simple numerical algorithm, compactness of $F_\#$ and validity of the clustering step.
In contrast, we assume that an approximation $\mu^N$ of $\mu$ and the behaviour of $F$ on $\mu^N$ are known where now $\mu$ is assumed to be an invariant measure of $F$.  Again, entropic smoothing provides a simple numerical algorithm and compactness of the transfer operator. 

We recall basic properties on transfer operators and (entropic) optimal transport in Sections \ref{sec:transfer_operators} and \ref{sec:ot}.
In Section \ref{sec:Method}, we introduce the regularized version $T^\eps$ of the original transfer operator $T=F_\#$ and the discrete transfer operator $T^{N,\eps}$ and establish the convergence of (an extension of) $T^{N,\eps}$ to $T^{\eps}$ in norm. 
As a proof of concept, in Section \ref{sec:shift map} we analytically study the application to the shift map on the $d$-torus to provide intuition for the interplay of the length scales associated to eigenfunctions of $T$, discretization index $N$, and entropic regularization $\eps$.
Several numerical examples are given in Section \ref{sec:Examples}.

\subsection{General hypotheses and notations}
Throughout this manuscript, $(\cX,\dst)$ is a compact metric space, and $F:\cX\to\cX$ is a continuous map. We shall occasionally assume that~$\cX$ is a compact subset of $\R^d$, but only when explicitly mentioned. Unless stated otherwise, the cost function $c:\cX\times\cX\to\Rnn$ for optimal transportation is the canonical one, $c(x,y)=\dst(x,y)^2$. We denote the set of probability measures on $X$ by $\prb(X)$ and use the notation $h\mu$ in order to denote a measure which is absolutely continuous with density $h$ with respect to the measure $\mu$.  Weak convergence of a sequence $(\mu^N)_N$ of measures to a limiting measure $\mu$ is denoted by $\mu^N \rightweaks \mu$.

\section{Background on transfer operators}
\label{sec:transfer_operators}

\subsection{Push-forward and invariant measures}
Choose some points $x_1,\ldots,x_N\in \cX$ independently according to some probability measure $\mu$ on $\cX$, i.e.\ such that the probability $\bP(x_j\in B)$ to find $x_j$ in some Borel set $B$ is $\mu(B)$.  Then
\begin{align*}
    \bP(F(x_j)\in A) = \bP(x_j\in F^{-1}(A)) = \mu(F^{-1}(A)), 
\end{align*}
i.e.\ the probability to find some image point $F(x_j)$ in some Borel set $A$ is $\mu(F^{-1}(A))$ where we denote by $F^{-1}(A)=\{x\in \cX\mid F(x)\in A\}$ the \emph{preimage} of the set $A\subset \cX$. Phrasing this differently: If some points $x_1,\ldots,x_N$ are distributed according to the probability measure $\mu$ then their image points $F(x_1),\ldots,F(x_N)$ are distributed according to the probability measure
\begin{equation}\label{eq:push_fwd}
F_\#\mu := \mu(F^{-1}(\cdot)),
\end{equation}
called the \emph{push-forward measure} of $\mu$ (under $F$).
The mapping $F_\#:\mu\mapsto F_\#\mu$ defines a linear operator on the space of bounded signed measures, called \emph{transfer operator} (or \emph{Frobenius-Perron operator}).  Clearly, if $\mu$ is a linear combination of point measures at some points $x_1,\ldots,x_N\in\cX$, i.e.\ $\mu=\sum_j a_j\delta_{x_j}$, $a_j\in\R$, then
\begin{equation}\label{eq:point_push_fwd}
F_\# \mu = \sum_j a_j\delta_{x_j}(F^{-1}(\cdot)) = \sum_j a_j\delta_{F(x_j)}.
\end{equation}
Additionally, from (\ref{eq:push_fwd}) we directly get that $F_\# \mu \geq 0$ and $\int \dd F_\# \mu = \int \dd \mu$ for any non-negative measure $\mu$, i.e.\ $F_\#$ is a \emph{Markov operator}. 

A probability measure which is a fixed point of $F_\#$ is called an \emph{invariant measure}. Invariant measures provide information about, e.g., recurrent behaviour of points:

\begin{theorem}[Poincaré]
Let $\mu$ be an invariant measure for $F$. For some Borel set $A\subset \cX$ let $A_0$ be the subset of points from $A$ which return infinitely often to $A$ under iteration with~$F$. Then $\mu(A_0)=\mu(A)$.
\end{theorem}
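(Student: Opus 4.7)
The plan is to follow the classical two-step argument: first prove the weaker statement that $\mu$-almost every point of $A$ returns to $A$ at least once, and then bootstrap this to the full recurrence claim by a countable-union argument.

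For the first step, I would introduce the ``bad'' set $B = \{x \in A : F^n(x) \notin A \text{ for all } n \geq 1\}$ consisting of the points in $A$ that never return. The key observation is that the preimages $F^{-n}(B)$ for $n=0,1,2,\ldots$ are pairwise disjoint: if $x$ lay in both $F^{-n}(B)$ and $F^{-m}(B)$ with $n<m$, then $F^n(x)\in B\subset A$ would simultaneously satisfy $F^{m-n}(F^n(x))=F^m(x)\in A$ (since it lies in $B$) and $F^{m-n}(F^n(x))\notin A$ (since no iterate of a point of $B$ returns to $A$), a contradiction. By invariance of $\mu$, $\mu(F^{-n}(B))=\mu(B)$ for each $n$, and since the preimages are disjoint and contained in a probability space, $\sum_{n\geq 0}\mu(B)\leq 1$, forcing $\mu(B)=0$.

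For the second step, I would argue that every $x\in A\setminus A_0$ — that is, every point in $A$ that returns to $A$ only finitely often — produces, via its last return time $N(x):=\sup\{n\geq 0 : F^n(x)\in A\}$, an iterate $F^{N(x)}(x)\in B$. Hence
\begin{equation*}
A\setminus A_0 \subset \bigcup_{N\geq 0} F^{-N}(B).
\end{equation*}
Countable subadditivity together with $\mu(F^{-N}(B))=\mu(B)=0$ then yields $\mu(A\setminus A_0)=0$, and since $A_0\subset A$ we conclude $\mu(A_0)=\mu(A)$.

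The only delicate point is verifying the disjointness of the preimages $F^{-n}(B)$; everything else is straightforward from the definitions, invariance, and countable subadditivity. I do not anticipate a real obstacle beyond being careful with the indexing of iterates in that step.
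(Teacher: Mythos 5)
Your argument is the standard (and correct) proof of Poincar\'e recurrence: the disjointness of the preimages $F^{-n}(B)$ of the never-returning set $B$, combined with invariance and finiteness of $\mu$, forces $\mu(B)=0$, and the last-return-time argument reduces $A\setminus A_0$ to a countable union of null sets. The paper states this theorem as classical background without giving a proof, so there is nothing to compare against; your write-up is complete up to the routine remark that $B$ and $A_0$ are Borel, since $F$ is continuous and $A$ is Borel.
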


\subsection{Ulam's method}

Invariant measures can be approximated by eigenvectors of some matrix approximation of $F_\#$. A particularly popular approximation results from \emph{Ulam's method} \cite{Ulam60}:
Fix a non-negative reference measure $m$ (e.g.~the Lebesgue measure or a measure concentrated on some invariant set $\cA \subset \cX$, e.g.~some attractor) and an $m$-essentially disjoint covering $A_1,\ldots,A_n$ of the support of $\mu$. Instead of solving the eigenproblem $F_\# \mu = \mu$ over all non-negative measures and for all measurable sets, we parametrize
$$\mu= \sum_{j=1}^n h_j \frac{m(A_j \cap \cdot)}{m(A_j)}
\quad \text{for non-negative coefficients } h_1,\ldots,h_n,$$
and only enforce the condition $F_\# \mu = \mu$ on the sets $A_j$,
$$h_j = \mu(A_j) = (F_\# \mu)(A_j) = \sum_{k=1}^n P_{jk} \cdot h_k
\quad \text{with coefficients } P_{jk} := \frac{m(A_k \cap F^{-1}(A_j))}{m(A_k)},$$
resulting in the discrete eigenproblem $Ph = h$ on $\R^{n}$.
$P$ is a Markov matrix, i.e.\ its columns sum to 1 and its entries give the conditional probability to map from element $A_k$ into $A_j$ of the covering. 

A simple way to implement Ulam's method is to choose a suitable set of sample points in each $A_j$ (e.g.\ on a grid or randomly) and to compute the location of their images in order to approximate the transition probabilities $P_{jk}$ \cite{Hsu81,Hunt98}. Convergence (as the number $n$ of covering sets goes to infinity) is slow \cite{Mu01} since the ansatz is of low regularity.

If one covers the entire state space $\cX$ by the $A_j$, the numerical effort for constructing the covering scales exponentially in the dimension of $\cX$.  In the case that the support of the invariant measure under consideration is low dimensional, this can be avoided by computing a covering of the support first \cite{DeHo97,DeJu99}.   

\subsection{Other eigenvalues}

As mentioned, invariant measures are fixed points of $F_\#$, i.e.\ eigenmeasures at the eigenvalue $1$. But other eigenpairs might also be of interest. Roots of unity reveal (macroscopic) cycling behaviour, while real eigenvalues close to $1$ can be used to detect almost invariant sets, i.e.\ metastable dynamics \cite{DeJu99}: Let $F_\# \mu=\mu$. If $F_\# \nu = \lambda \nu$, $\nu=h\mu$, $h \in L^1(\mu)$, for some real $\lambda < 1$, $\lambda\approx 1$, then the sets $A^-:=\{ h < 0 \}$ and $A^+:=\{ h > 0\}$ have the property that the ``internal'' transition probability
\[
p(A^\pm) := \frac{\nu(A^\pm \cap F^{-1}(A^\pm))}{\nu(A^\pm)}
\]
to stay in $A^-$ resp.\ $A^+$ under one iteration of $F$ is close to $1$.  We will demonstrate this kind of macroscopic behaviour in the experiments in Section~\ref{sec:experiments}.
This can immediately be applied to other eigenpairs of the matrix $P$ from Ulam's method.
In the general case this spectral analysis may be more convenient on densities, see below.

\subsection{Push-forward for densities}
\label{sec:PshfwdDens}

Sometimes it is more convenient to restrict oneself to measures that are absolutely continuous with respect to some reference measure $\mu$. The push-forward under $F$ induces a linear map $T : L^1(\mu) \to L^1(F_\# \mu)$
where for $h\in L^1(\mu)$, $T h$ is characterized by
\begin{align} 
\label{eq:Top}
    \int_{\cX}T h(y) \; \varphi(y)  \dd\big(F_\# \mu)(y) = \int_{\cX} h(x) \; \varphi(F(x))  \dd\mu(x),
\end{align}
for continuous test functions $\varphi : \cX \to \R$, or, more concisely,
\begin{align}
    \label{eq:dfnT} 
    T h:=\frac{\dn(F_\# (h\mu))}{\dn(F_\# \mu)}. 
\end{align}
If $F$ is a homeomorphism this simplifies to $T h=h\circ F^{-1}$ and becomes independent of $\mu$.
For any convex function $J$ one can show with Jensen's inequality \cite[Lemma 3.15]{LinHK2021} that
\begin{align}
	\label{eq:Jensen}
    \int_{\cX} J\circ T h\,\dd (F_\# \mu) \le \int_{\cX}J\circ h\dd\mu,
\end{align}
hence $T h$ indeed lies in $L^1(F_\# \mu)$ and in fact the appropriate restrictions of $T$ are continuous linear operators from $L^p(\mu)$ to $L^p(F_\#\mu)$ for any $p\geq 1$.

If $\mu$ happens to be invariant under $F$, i.e., $F_\# \mu=\mu$, then $T$ is actually a continuous linear self-mapping on each $L^p(\mu)$, and in particular on the Hilbert space $L^2(\mu)$. 
The definition of $T$ above, however, makes sense even if $\mu$ is not invariant. 
The necessity to consider transfer operators with respect to non-invariant measures
arises e.g.~when $\mu$ is an approximation to an invariant measure by point masses,
in which case $F_\#\mu$ is not only different from $\mu$, but typically not even 
absolutely continuous with respect to $\mu$, making spectral analysis impossible.
The method we introduce below modifies $T$ by means of optimal transport in such
a way that it is always a self-mapping on $L^p(\mu)$, thus making spectral analysis
feasible again.

\subsection{The Koopman operator}

As alluded to in the introduction, there is a dual view on the description of dynamics via operators originally conceived in \cite{Ko:31}, which has received a lot of interest in the past two decades, see \cite{Me:05,rowley2009spectral,BuMoMe:12,klus2016numerical} and references therein: given some continuous scalar field $\varphi:\cX\to\R$, often called \emph{observable}, define the \emph{Koopman operator} $K:C(\cX)\to C(\cX)$ by
\[
K\varphi := \varphi \circ F.
\]
The characterization \eqref{eq:Top} shows that $K$ is the formal adjoint to the transfer operator. Indeed, for $p=2$ we have
\begin{equation}
    \label{eq:duality}
\langle Th,\varphi \rangle_{L^2(F_\#\mu)}
= \langle h, K\varphi \rangle_{L^2(\mu)}
\end{equation}
for $h\in L^2(\mu)$, $\varphi\in C(\cX)$. By duality, spectral properties of one operator directly carry over to the adjoint one. In particular, the spectra of $T$ and $K$ are identical.


\section{Background on plain and entropic optimal transport}
\label{sec:ot}
Extensive introductions to optimal transport can be found in \cite{Villani-OptimalTransport-09,Santambrogio-OTAM}, computational aspects are treated in \cite{PeyreCuturiCompOT}. Here we briefly summarize the ingredients required for our method.
\subsection{The original optimal transport problem}
The basic problem in the mathematical theory of optimal transport (OT) is the following: given two mass distributions, represented by probability measures $\mu,\nu\in\prb(\cX)$, and a function $c:\cX\times\cX\to\R$ that assigns the cost $c(x,y)$ to the transfer of one unit of mass from $x\in\cX$ to $y\in\cX$, find the most cost efficient way to ``re-distribute'' $\mu$ into $\nu$. The first mathematically sound definition of to ``re-distribute'' was due to Monge \cite{MongeOT1781} in terms of maps $\Phi:\cX\to\cX$ that transport $\mu$ to $\nu$, i.e.
\begin{align}
    \label{eq:mapsto}
    \Phi_\#\mu=\nu.
\end{align}
In that class of transport maps, one seeks to minimize the total cost associated with $\Phi$, given by
\begin{align}
    \label{eq:monge}
    \int_{\cX}c(x,\Phi(x))\dd\mu(x). 
\end{align}
This formulation is very intuitive, since $\Phi$ assigns to each original mass position $x$ a target position $\Phi(x)$. An optimal map $\Phi$ need not exist in general. A prototypical obstacle is that if $\mu$ charges some single point $x$ with positive mass (which is the situation of interest here), then this mass may need to be split into several parts going to different target points $y$ charged by $\nu$. 

\subsection{Kantorovich relaxation and Wasserstein distances}
\label{sec:OTKantorovich}
Additional flexibility is provided by Kantorovich's relaxation. It is formulated with so-called transport plans, which are probability measures $\gamma\in\prb(\cX\times\cX)$ with marginals $\mu$ and $\nu$ respectively. We denote the set of transport plans by
\begin{align}
    \label{eq:constraint}
    \Gamma(\mu,\nu) := \left\{ \gamma \in \prb(\cX \times \cY) \,|\, \pi^1_\#\gamma=\mu,\, \pi^2_\#\gamma=\nu \right\},
\end{align}
where $\pi^i : \cX \times \cX \to \cX$ denotes the map $(x_1,x_2) \mapsto x_i$ for $i=1,2$.
The constraints correspond to the condition \eqref{eq:mapsto}.
For measurable $A, B \subset \cX$, one interprets $\gamma(A \times B)$ as the mass that is transported from $A$ to $B$.
Accordingly, the minimization problem \eqref{eq:monge} becomes
\begin{align}
    \label{eq:kantorovich}
    \COT(\mu,\nu) := \inf_{\gamma \in \Gamma(\mu,\nu)} \int_{\cX\times\cX}c(x,y)\dd\gamma(x,y).
\end{align}
Unlike for \eqref{eq:monge}, admissible $\gamma$ always exist, e.g.~the product measure $\mu \otimes \nu \in \Gamma(\mu,\nu)$.
Existence of an optimal plan $\gamma_\opt$ follows from mild assumptions, e.g.~when $c$ is continuous \cite{Villani-OptimalTransport-09}.
For $\cX \subset \R^d$, $c(x,y)=\dst(x,y)^2=\|x-y\|^2$, Brenier's fundamental theorem \cite{MonotoneRerrangement-91} says that $\gamma_\opt$ is actually not far from being a map. Namely, its support lies in the graph of the subdifferential of a convex function.  Efficient numerical solvers for the optimal transport problem all use the sparsity of the support of $\gamma_\opt$.

In our case, i.e.\ when $c(x,y) := \dst(x,y)^2$, the map $(\mu,\nu) \mapsto W_2(\mu,\nu):=\sqrt{\COT(\mu,\nu)}$ is called the 2-Wasserstein distance. It is a metric on $\prb(\cX)$ that metrizes weak* convergence. For non-compact metric $\cX$ additional conditions on the moments of $\mu$, $\nu$ apply. The construction works analogously for arbitrary exponents $p \in [1,\infty)$.

\subsection{Entropic regularization}

The basic optimal transport problem \eqref{eq:kantorovich} is convex but not strictly convex. That ``flatness'' is often an obstacle, both in the analysis and for an efficient numerical treatment. A way to overcome it is \emph{entropic regularization}. It amounts to augmenting the variational functional in \eqref{eq:kantorovich} by a regularizing term: For fixed $\eps>$, let
\begin{align}
    \label{eq:entropic}
    \COT^\eps(\mu,\nu) := \inf_{\gamma \in \Gamma(\mu,\nu)} \int_{\cX\times\cX}c(x,y)\dd\gamma(x,y) + \eps\ent(\gamma\mid\theta),
\end{align}
where $\ent(\cdot\mid\theta)$ is the probabilistic entropy with respect to some non-negative reference measure $\theta$ on $\cX\times\cX$,
\[ 
\ent(\gamma\mid\theta) := \int_{\cX\times\cX} \left(\log\frac{\dn\gamma}{\dn\theta}\right)\dd\gamma
\quad \text{if } \gamma \ll \theta,\,\gamma \geq 0 
\quad\text{and} \quad\ent(\gamma\mid\theta)=\infty \text{ otherwise.} 
\]
The appropriate choice of $\theta$ depends on the context at hand; in this article we shall always assume that 
\[ \theta = \mu\otimes\nu \]
when solving for $\COT^\eps(\mu,\nu)$.
For that problem, the unique minimizer $\gamma^\eps_\opt$ can be written in the form
\begin{align}
    \label{eq:entropicsolution}
    \gamma^\eps_\opt = g^\eps\theta, \quad\text{where}\quad
    g^\eps(x,y) = \exp\left(\frac{-c(x,y)+\alpha(x)+\beta(y)}{\eps}\right),
\end{align}
for some Lagrange multipliers $\alpha,\beta:\cX\to\R$ that realize the marginal constraints~\eqref{eq:constraint}. The multipliers $\alpha$~and $\beta$ are unique up to a constant additive shift.
The marginal constraints imply that $g^\eps$ is bistochastic in the following sense:
\begin{equation}
\label{eq:gBistochastic}
\int_{\cX} g^\eps(x,y')\dd \nu(y') = 1, \quad
\int_{\cX} g^\eps(x',y)\dd \mu(x') = 1 \quad
\text{for $\mu \otimes \nu$-almost all $(x,y)$.}
\end{equation}
This implies the following condition on $\alpha$:
\begin{equation}
\label{eq:EntropicOptimality}
\alpha(x) = -\eps \cdot \log 
\int_{\cX} \exp\left(\frac{-c(x,y)+\beta(y)}{\eps}\right) \dd \nu(y) 
\end{equation}
for $\mu$-almost every $x$ and the analogous condition for $\beta$.
These conditions holding $\mu$-almost everywhere for $\alpha$ and $\nu$-almost everywhere for $\beta$ are sufficient and necessary for optimality of $\gamma^\eps$ as given in~\eqref{eq:entropicsolution}.
After discretization, these multipliers can be calculated very efficiently via the Sinkhorn algorithm, see below.
Another consequence is the loss of sparsity: while minimizers of the unregularized problem tend to be sparse (possibly even concentrated on a graph, see \cite{MonotoneRerrangement-91}) the support of $\gamma^\eps_\opt$, minimizing the regularized problem, is that of $\theta=\mu \otimes \nu$. When a sparse, deterministic transport is sought, the diffusivity would be considered a nuisance.
But it can also be helpful in reducing discretization artifacts, which is beneficial in our application, the minimizing $\gamma^\eps_\opt$ is always unique (unlike for $\eps=0$), and regularization improves the regularity of the map $\COT^\eps$, which becomes differentiable with respect to $\mu$ and $\nu$, see for instance \cite{FeydyDissertation}.

\subsection{Discretization and Sinkhorn algorithm}
\label{sec:Sinkhorn}
When $\mu$ and $\nu$ are supported on finite subsets $\setM=\{x_1,\ldots,x_M\}$ and $\setN=\{y_1,\ldots,y_N\}$ of $\cX$, then $\mu$ and $\nu$ are identified with tuples $(\mu_1,\ldots,\mu_M)$ and $(\nu_1,\ldots,\nu_N)$ of non-negative numbers that sum up to one, and plans $\gamma$ can be identified with non-negative matrices $(\gamma_{ij})$. The admissibility condition in \eqref{eq:constraint} becomes
\begin{align}
    \label{eq:d-constraint}
    \sum_{j=1}^N\gamma_{ij}=\mu_i \text{ for all } i, \qquad \sum_{i=1}^M\gamma_{ij} = \nu_j \text{ for all } j.
\end{align}
The reference measure $\theta$ can also be represented by a discrete matrix of coefficients.
With the short-hand notation $c_{ij}=c(x_i,y_j)$, the entropic problem \eqref{eq:entropic} becomes
\begin{align}
\label{eq:d-entropic}
\COT^\eps(\mu,\nu) = \inf_{\gamma \in \Gamma(\mu,\nu)} \sum_{i,j} \gamma_{ij}\left(c_{ij} + \eps\log(\gamma_{ij}/\theta_{ij})\right),
\end{align}
with $\theta_{ij}=\mu_i\,\nu_j$.
In analogy to \eqref{eq:entropicsolution}, its solution is given by
\begin{align}
	\label{eq:d-entropicsolution}
    \gamma_{ij}^\eps = \exp\left(\frac{-c_{ij}+\alpha_i +\beta_j}{\eps}\right) \cdot \theta_{ij}
\end{align}
with Lagrange multipliers $(\alpha_i)$ and $(\beta_j)$ that satisfy the constraint \eqref{eq:d-constraint}. As above, $(\alpha_i)$, $(\beta_j)$ are unique up to a constant additive shift.

Problem \eqref{eq:d-entropic} can be solved numerically via the Sinkhorn algorithm \cite{Cuturi13} which consists of alternatingly adjusting $(\alpha_i)$ and $(\beta_j)$ such that the constraints $\pi^1_\# \gamma=\mu$ and $\pi^2_\# \gamma=\nu$ are satisfied, leading to the discretized version of \eqref{eq:EntropicOptimality}. That is, for an initial $(\beta^{(0)}_j)$, e.g.~all zeros, one sets for $\ell=1,2,3,\ldots$,
\begin{equation}
\label{eq:Sinkhorn}
    \begin{split}
	\exp(\alpha^{(\ell)}_i/\eps) & = \left[\sum_{j=1}^N \exp\left(\frac{-c_{ij}+\beta^{(\ell-1)}_j}{\eps}\right) \cdot \nu_j \right]^{-1}, \\
	\exp(\beta^{(\ell)}_i/\eps) & = \left[\sum_{i=1}^M \exp\left(\frac{-c_{ij}+\alpha^{(\ell)}_i}{\eps}\right) \cdot \mu_i \right]^{-1}.
    \end{split}
\end{equation}

The Sinkhorn algorithm has reached great popularity due to its ``lightspeed'' performance \cite{Cuturi13}, at least on problems with a moderate number of points and a sufficiently large regularization $\eps>0$. More algorithmic details for small $\eps$ and larger pointclouds are explored in \cite{SchmitzerScaling2019,FeydyDissertation}. There are also efficient GPU implementations that work in high ambient dimensions and without memory issues, such as the KeOps and geomloss libraries \cite{KeOps,feydy2019interpolating}.
In our numerical experiments, solving the regularized transport problems has always been faster than the eigen-decomposition of the discrete Markov matrices and thus the former does not introduce a computational bottleneck into the method.

\section{Approximation method}
\label{sec:Method}

We now introduce our method for the approximation and spectral analysis of the transfer operator introduced in Section~\ref{sec:transfer_operators}.
First, we observe that an optimal transport plan also induces a transfer operator in Section \ref{sec:OTTransfer}. Then we introduce the entropically smoothed version $T^\eps: L^p(\mu) \to L^p(\mu)$ of the original operator $T$ in Section \ref{sec:TEps} and subsequently its approximate version $T^{N,\eps} : L^p(\mu^N) \to L^p(\mu^N)$ in Section \ref{sec:TNEps}.
Finally, we introduce an extension $\hat{T}^{N,\eps} : L^p(\mu) \to L^p(\mu)$ of $T^{N,\eps}$ and show convergence of the extension to $T^{\eps}$ in $L^2$-operator norm as $N \to \infty$ in Section \ref{sec:Convergence}. This implies convergence of the spectrum of $\hat{T}^{N,\eps}$ to that of $T^{\eps}$ (see Section \ref{sec:ConvergenceSpectra}). The relation between the spectra of $T^{N,\eps}$ and its extension $\hat{T}^{N,\eps}$ is elaborated upon in Section \ref{sec:SpectrumRelation}.

\subsection{Transfer operators induced by transport plans}

\label{sec:OTTransfer}
A transport plan $\gamma \in \Gamma(\mu,\nu)$, computed with or without entropic regularization, or possibly also non-optimal, induces a linear map $G : L^p(\mu) \to L^p(\nu)$ characterized by
\begin{equation}
\label{eq:GCouplingPairing}
	\int_{\cX} \varphi(y)\,\big(G h\big)(y) \dd \nu(y)
	= \int_{\cX \times \cX} \varphi(y)\,h(x) \dd \gamma(x,y).
\end{equation}
With entropic regularization, when $\gamma=g(\mu \otimes \nu)$ is of the form \eqref{eq:entropicsolution}, one obtains
\[\big(G h\big)(y) = \int_{\cX} h(x)\,g(x,y) \dd\mu(x).\]
Taking into account \eqref{eq:gBistochastic} and with Jensen's inequality, similar to \eqref{eq:Jensen} this yields
\[ \int_{\cX} J\big(G h(y)\big)\dd \nu(y) \le \int_{\cX\times\cX} J(h(x))\dd \gamma(x,y)
 = \int_{\cX} J(h(x))\dd \mu(x), \]
and therefore $G$ indeed maps $L^p(\mu) \to L^p(\nu)$. A similar representation and the same conclusion can be drawn for general $\gamma \in \Gamma(\mu,\nu)$ by using disintegration \cite[Theorem 5.3.1]{AmbrosioGradientFlows2008}.

\subsection{Entropic regularization of the transfer operator}
\label{sec:TEps}

For some $\mu$ let $T$ be the transfer operator $L^p(\mu) \to L^p(F_\# \mu)$ as introduced in Section \ref{sec:PshfwdDens}. We do not assume $F_\# \mu=\mu$.
Let $\gamma^{\eps}$ be the optimal $\eps$-entropic transport plan from $F_\# \mu$ to $\mu$, i.e.~the minimizer corresponding to \eqref{eq:entropic}, and let $g^{\eps}$ be its density, i.e.~$\gamma^{\eps}=g^{\eps} (F_\# \mu\otimes \mu)$, see \eqref{eq:entropicsolution}.
In this case the marginal conditions \eqref{eq:gBistochastic} become
\begin{align}
\label{eq:marginalRe}
\int_{\cX} g^{\eps}(x',x) \dd F_\# \mu(x') & = 1, &
\int_{\cX} g^{\eps}(y,y') \dd \mu(y') & = 1.
\end{align}
for $\mu$-almost all $x$ and $F_\#\mu$-almost all $y$.

As discussed in Section \ref{sec:OTTransfer}, $\gamma^{\eps}$ induces a map $L^p(F_\# \mu) \to L^p(\mu)$ which we will denote by $G^{\eps}$. It is given by
\begin{equation}
\label{eq:Gentropic}
(G^{\eps}h)(y) = \int_{\cX} h(x) g^{\eps}(x,y) \dd F_\# \mu(x).
\end{equation}

Now, set $T^{\eps} := G^{\eps} T$ as the composition of the two maps, which takes $L^p(\mu)$ into itself. This is illustrated in the left panel of Figure \ref{fig:Operators}. Using \eqref{eq:dfnT}, we obtain the representation
\begin{align}
	\label{eq:Tentropic}
	(T^{\eps} h)(y) = (G^{\eps} T h)(y) 
	& = \int_{\cX} (T h)(x)\,g^{\eps}(x,y) \dd F_\# \mu(x) \\
	& = \int_{\cX} g^{\eps}(x,y) \dd F_\# (h\mu)(x) \nonumber \\
    & = \int_{\cX} h(x) \; g^{\eps}(F(x),y)  \dd \mu(x), \nonumber
\end{align}
i.e.\ we obtain an integral operator with a smoothing kernel as also used in \cite{DeJu99} and later in \cite{Fr:13} in the context of characterizing coherent sets in non-autonomous systems.
Using \eqref{eq:marginalRe} one finds that the measure $g^{\eps}(F(\cdot),\cdot) (\mu \otimes \mu)$ lies in $\Gamma(\mu,\mu)$ and by the form of the density $g^{\eps}(F(\cdot),\cdot)$,
\begin{align}
	\label{eq:gFdens}
	g^{\eps}(F(x),y) = \exp\left(\frac{-c(F(x),y)+\alpha(F(x))+\beta(y)}{\eps}\right),
\end{align}
we deduce that it is the optimal $\eps$-entropic coupling between $\mu$ and itself with respect to the cost function $c(F(\cdot),\cdot)$ and $\theta=\mu\otimes \mu$, see \eqref{eq:entropicsolution}.
We will denote this density by
\begin{align}
	\label{eq:tdens}
	t^{\eps}(x,y) := g^{\eps}(F(x),y).
\end{align}

\subsection*{The regularized Koopman operator}

We can use duality in order to derive a regularized version of the Koopman operator from \eqref{eq:Tentropic}. The adjoint ${G^{\eps*}}:L^2(\mu)\to L^2(F_\#\mu)$ of $G^\eps:L^2(F_\#\mu)\to L^2(\mu)$ is
\[
({G^{\eps*}}f)(x) = \int_\cX f(y)\;g^\eps(x,y) \dd\mu(y).
\]
From \eqref{eq:duality}, we get for $h\in L^2(\mu),\varphi\in C(\cX)$:
\begin{align*}
\langle T^\eps h,\varphi \rangle_{L^2(\mu)}
& = \langle G^\eps Th,\varphi \rangle_{L^2(\mu)} 
 = \langle h, K{G^{\eps*}}\varphi \rangle_{L^2(\mu)},
\end{align*}
which leads us to define the \emph{entropically regularized Koopman operator} to be $K^\eps:C(\cX)\to C(\cX)$,
\begin{align*}
K^\eps & := K\circ {G^{\eps*}},
&
(K^\eps f)(x) & = \int_\cX f(y) \; g^\eps(F(x),y) \dd \mu(y).
\end{align*}

\subsection*{\texorpdfstring{Consistency for $\eps\to 0$}{Consistency for eps to 0}} We clarify the consistency of the representation of $T^\eps : L^p(\mu) \to L^p(\mu)$ in \eqref{eq:Tentropic} with the classical transfer operator $T : L^p(\mu) \to L^p(F_\# \mu)$ from \eqref{eq:Top} in the limit $\eps\downarrow0$. We assume for simplicity that the unregularized optimal transport $\gamma^0$ from $F_\#\mu$ to $\mu$ is given by an invertible transport map $\Psi:\cX\to\cX$, i.e.~$\gamma^0=(\id,\Psi)_\#F_\#\mu$. In that case, as $\eps\downarrow0$, the entropic optimal plan $\gamma^{\eps}$ converges weakly to $\gamma^0$, see for instance \cite{Carlier-EntropyJKO-2015}. Now let $\varphi\in C(\cX)$ be a test function, and assume that $h\in L^p(F_\#\mu)$ is continuous. The representation of $T^\eps$ from \eqref{eq:Tentropic} and the definition of $T$ from \eqref{eq:dfnT} imply:
\begin{align*}
    \int_{\cX}\varphi(y)\,\big(T^{\eps}h\big)(y)\dd\mu(y) 
    &= \int_{\cX\times\cX}\varphi(y)\,\big(T h\big)(x)\,g^{\eps}(x,y)\dd\big(F_\#\mu\otimes\mu\big)(x,y) \\
    &= \int_{\cX\times\cX}\varphi(y)\,\big(T h\big)(x)\dd \gamma^{\eps}(x,y) \\
    &\stackrel{\eps\to0}{\longrightarrow} \int_{\cX\times\cX}\varphi(y)\,\big(T h\big)(x)\,\dd \gamma^0(x,y) \\
    &=\int_{\cX}\varphi\circ\Psi(x)\,\big(Th\big)(x)\dd F_\#\mu(x),
\end{align*}
which in view of $\Psi_\#F_\#\mu=\mu$ implies that $\mu$-a.e.~the limit of $T^\eps h$ is $(Th)\circ\Psi^{-1}$. That is, the limit of the modified transfer operators $T^{\eps}$ coincides, at least formally, with the unregularized transfer operator $T$, composed with the optimal transport map from $\mu$ to $F_\#\mu$. In particular, if $\mu$ is invariant under $F$, then the optimal map $\Psi$ is the identity, and $T^\eps$ converges to $T$. In that case, for small positive $\eps>0$, the operator $T^{\eps}$ should be thought of as a regularized version of $T$, where $\big(T^{\eps}h\big)(x)$ is a weighted average of the values $h(y)$ for $y$ close to $F^{-1}(x)$.

\subsection{Approximate entropic transfer operator}
\label{sec:TNEps}
Let now $(\mu^N)_N$ be an approximating sequence of $\mu$, i.e.~$\mu^N \rightweaks \mu$ as $N \to \infty$. 
In our applications we consider $\mu^N$ that are concentrated on a finite number $N$ of points $x^N_1,\ldots,x^N_N\in\cX$,
\begin{equation}
\label{eq:MuDiscrete}
\mu^N := \sum_{k=1}^N m^N_k\delta_{x^N_k} \quad \text{with}\quad m^N_k \geq 0 \text{ and } \sum_{k=1}^Nm^N_k=1.
\end{equation}
The push forward $F_\# \mu^N$ of $\mu^N$ is then given by $\sum_{k=1}^N m^N_k \delta_{F(x^N_k)}$.
But the construction in this section works for general $\mu^N \in \prb(\cX)$.
The measure $\mu^N$ induces the discrete transfer operator $T^{N} : L^p(\mu^N) \to L^p(F_\# \mu^N)$ via \eqref{eq:Top}.
We can then perform the same construction as in the previous section for the measure $\mu^N$, introducing an operator $G^{N,\eps} : L^p(F_\# \mu^N) \to L^p(\mu^N)$ via the entropic optimal transport plan $\gamma^{N,\eps}$ from $F_\# \mu^N$ to $\mu^N$ and composing $G^{N,\eps}$ with $T^{N}$ to obtain the operator $T^{N,\eps}$ from $L^p(\mu^N)$ onto itself. This is part of the illustration in the right panel of Figure \ref{fig:Operators} .
Analogously to \eqref{eq:Tentropic}, \eqref{eq:tdens} we find that
\begin{align}
\label{eq:TNepsAction}
	\big(T^{N,\eps} h\big)(y) = \int_{\cX} h(x)\,t^{N,\eps}(x,y) \dd \mu^N(x)
\end{align}
$\mu^N$-almost everywhere, where $t^{N,\eps}$ is the density of the optimal $\eps$-entropic transport plan between $\mu^N$ and itself for the cost $c(F(\cdot),\cdot)$.
For the discrete case \eqref{eq:MuDiscrete}, this becomes
\begin{align}
	\label{eq:AlmostGamma}
	\big(T^{N,\eps} h\big)(x^N_k) = \sum_{j=1}^N h(x^N_j)\,t^{N,\eps}(x^N_j,x^N_k)\,m^N_j.
\end{align}
In this case, $t^{N,\eps}$ can be identified with a bistochastic $N \times N$ matrix $\big(t^{N,\eps}(x^N_j,x^N_k)\big)_{j,k=1}^N$ where bistochastic has to be understood relative to the weights $(m^N_k)$, i.e.,
\begin{align*}
    \sum_{j=1}^N m^N_j\, t^{N,\eps}(x^N_j,x^N_k) =1, 
    \quad
    \sum_{k=1}^N m^N_k\,t^{N,\eps}(x^N_j,x^N_k) =1. 
\end{align*}
This matrix can be approximated numerically efficiently by the Sinkhorn algorithm, see Section \ref{sec:Sinkhorn}.

\subsection{Approximate entropic Koopman operator}

The same discretization strategy applied to the Koopman operator yields
\begin{align}
	\big(K^{N,\eps} f\big)(x) = \int_{\cX} f(y)\,t^{N,\eps}(x,y) \dd \mu^N(y)
\end{align}
or, more explicitely,
\begin{align}
	\big(K^{N,\eps} f\big)(x^N_k) = \sum_{j=1}^N f(x^N_j)\,t^{N,\eps}(x^N_k,x^N_j)\,m^N_j,
\end{align}
which is the transpose of $\big(t^{N,\eps}(x^N_j,x^N_k)\big)_{j,k=1}^N$ as a representation of the discretized regularized Koopman operator.

\subsection{\texorpdfstring{Convergence as $N \to \infty$}{Convergence as N to Infty}}

\label{sec:Convergence}
\begin{figure}[ht]
\centering
\includegraphics[]{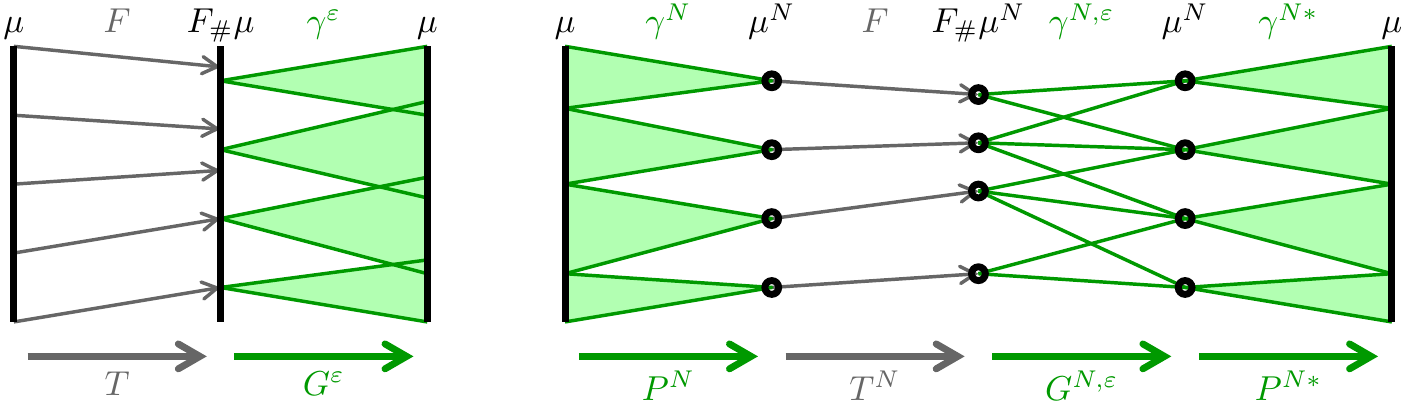}
\caption{Schematic of the construction of the maps $T^\eps$ (left) and $\hat{T}^{N,\eps}$ (right). In this figure $\gamma^\eps$ is a non-deterministic (entropic) continuous to continuous transport plan, $\gamma^N$ is a deterministic unregularized continuous to discrete transport plan and $\gamma^{N,\eps}$ is a non-deterministic discrete to discrete transport plan.}
\label{fig:Operators}
\end{figure}
For a better comparison of the transfer operators $T^{\eps}$ and $T^{N,\eps}$ related to a measure $\mu$ and its approximation $\mu^N$, we will now extend $T^{N,\eps}$ to the space $L^p(\mu)$.
Let $\gamma^N$ be the optimal (non-entropic) transport plan from  $\mu$ and $\mu^N$. As discussed in Section \ref{sec:OTTransfer}, $\gamma^N$ induces an operator $P^N: L^p(\mu) \to L^p(\mu^N)$. We now set
\begin{align*}
	\hat{T}^{N,\eps} := P^{N\ast}\,T^{N,\eps}\,P^{N},
\end{align*}
where $P^{N\ast} : L^p(\mu^N) \to L^p(\mu)$ is the adjoint of $P^N$ in the case $p=2$, which is equal to the transfer operator induced by the `transpose' $\gamma^{N*}:=(\pi^2,\pi^1)_\# \gamma^N$ of the transport plan $\gamma^N$.
The structure of the operator $\hat{T}^{N,\eps}$ is illustrated in the right panel of Figure \ref{fig:Operators}.
One finds with \eqref{eq:GCouplingPairing} and \eqref{eq:TNepsAction}
\begin{align*}
	\int_{\cX} (\hat{T}^{N,\eps}h\big)(y)\,\varphi(y) \dd\mu(y)
	& = \int_{\cX^2} \varphi(y)\,\big(T^{N,\eps}\,P^N h\big)(w)\,\dd \gamma^{N*}(w,y) \\
	& = \int_{\cX^3} \varphi(y)\,t^{N,\eps}(v,w)\, \big(P^N h\big)(v)\,\dd \gamma^{N*}(w,y) \dd \mu^N(v) \\
	& = \int_{\cX^4} \varphi(y)\,t^{N,\eps}(v,w)\,h(x) \dd \gamma^N(x,v) \dd \gamma^N(y,w)
\end{align*}
for test functions $\varphi \in C(\cX)$. Equivalently we can write
\begin{align}
\label{eq:THatExplicit}
	(\hat{T}^{N,\eps}h\big)(y) & = \int_{\cX} h(x)\,\hat{t}^{N,\eps}(x,y) \dd \mu(x) \\
	\text{with} \quad \hat{t}^{N,\eps}(x,y) & := \int_{\cX \times \cX} t^{N,\eps}(v,w) \dd \gamma^N_x(v) \dd \gamma^N_y(w)
	\nonumber
\end{align}
where $(\gamma^N_x)_x$ is the disintegration of $\gamma^N$ with respect to its first marginal, i.e.
\begin{align*}
	\int_{\cX \times \cX} \varphi(x,y)\dd \gamma^N(x,y) = \int_{\cX} \left[ \int_{\cX} \varphi(x,y)\dd \gamma^N_x(y) \right] \dd \mu(x)
\end{align*}
for $\varphi \in C(X \times Y)$.
Note that since $\mu$-almost all $(\gamma^N_x)_x$ are probability measures and since $t^{N,\eps} \in L^\infty(\mu^N \otimes \mu^N)$, one finds that $\hat{t}^{N,\eps} \in L^p(\mu \otimes \mu)$ for any $p \in [1,\infty]$.

\begin{proposition}
\label{prop:Conv}
If $\mu^N \rightweaks \mu$ as $N \to \infty$ then $\|\hat{t}^{N,\eps}-t^{\eps}\|_{L^2(\mu \otimes \mu)} \to 0$ and $\hat{T}^{N,\eps} \to T^{\eps}$ in the $L^2(\mu)\to L^2(\mu)$ operator norm.
\end{proposition}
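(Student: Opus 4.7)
The plan is to reduce operator-norm convergence to $L^2$-convergence of integral kernels, and then to prove the latter by combining uniform convergence of dual potentials with concentration of $\gamma^N$ on the diagonal. For any integral operator $Sh(y)=\int h(x)\,s(x,y)\dd\mu(x)$ on $L^2(\mu)$ one has the Hilbert--Schmidt bound $\opn{S}\le\|s\|_{L^2(\mu\otimes\mu)}$. Applying this to $\hat T^{N,\eps}-T^\eps$, which is an integral operator with kernel $\hat t^{N,\eps}-t^\eps$, the operator-norm statement reduces to the claimed $L^2$-convergence of kernels. Since all kernels in sight are uniformly bounded in $N$ (after properly normalizing the additive constant in the dual potentials, the standard a priori bound gives $\|\alpha^{N,\eps}\|_\infty,\|\beta^{N,\eps}\|_\infty\le\|c\|_\infty$, and \eqref{eq:gFdens} yields $\|t^{N,\eps}\|_\infty\le\exp(2\|c\|_\infty/\eps)$, uniformly in $N$, and analogously for $t^\eps$ and $\hat t^{N,\eps}$), dominated convergence further reduces the task to pointwise $\mu\otimes\mu$-almost everywhere convergence of $\hat t^{N,\eps}$ to $t^\eps$.

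The central step is to show that the dual potentials converge. Extending $\alpha^{N,\eps}$ and $\beta^{N,\eps}$ to all of $\cX$ by means of the Sinkhorn/Schrödinger formula \eqref{eq:EntropicOptimality} (with $\mu^N$ in place of $\nu$) yields continuous functions $\tilde\alpha^{N,\eps},\tilde\beta^{N,\eps}$. A soft-max estimate applied to \eqref{eq:EntropicOptimality} shows that these extensions inherit the modulus of continuity of the cost $(x,y)\mapsto c(F(x),y)$, which is uniform on the compact $\cX\times\cX$. Normalizing the free additive constant and combining with the uniform sup-norm bound, the sequences are equicontinuous and uniformly bounded; by Arzelà--Ascoli, every subsequence has a further subsequence converging uniformly. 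Passing to the limit in \eqref{eq:EntropicOptimality} using $\mu^N\rightweaks\mu$ together with uniform convergence of integrands, the limits solve the Schrödinger system for the continuous problem and, by uniqueness up to additive constants, coincide with $(\alpha^\eps,\beta^\eps)$. Hence the continuous function $\tilde t^{N,\eps}(x,y):=\exp((-c(F(x),y)+\tilde\alpha^{N,\eps}(x)+\tilde\beta^{N,\eps}(y))/\eps)$ converges to $t^\eps$ uniformly on $\cX\times\cX$ and agrees with $t^{N,\eps}$ on $\mathrm{supp}(\mu^N)\times\mathrm{supp}(\mu^N)$.

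Since the disintegrations $\gamma^N_x$ are supported in $\mathrm{supp}(\mu^N)$, we may replace $t^{N,\eps}$ by $\tilde t^{N,\eps}$ in \eqref{eq:THatExplicit} and decompose
\[
\hat t^{N,\eps}(x,y)-t^\eps(x,y)=\iint\bigl(\tilde t^{N,\eps}-t^\eps\bigr)(v,w)\dd\gamma^N_x(v)\dd\gamma^N_y(w) + \iint\bigl(t^\eps(v,w)-t^\eps(x,y)\bigr)\dd\gamma^N_x(v)\dd\gamma^N_y(w).
\]
The first term is bounded uniformly by $\|\tilde t^{N,\eps}-t^\eps\|_\infty\to0$. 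For the second, weak* convergence $\mu^N\rightweaks\mu$ on the compact space $\cX$ entails $W_2(\mu,\mu^N)\to 0$, so $\int\dst(x,v)^2\dd\gamma^N(x,v)\to 0$, i.e.\ $\gamma^N$ concentrates on the diagonal; combined with uniform continuity of $t^\eps$ on $\cX\times\cX$, the second term tends to $0$ in $L^1(\mu\otimes\mu)$. Together with the uniform $L^\infty$ bound and dominated convergence this upgrades to $\|\hat t^{N,\eps}-t^\eps\|_{L^2(\mu\otimes\mu)}\to 0$, which by the Hilbert--Schmidt bound implies operator-norm convergence.

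The main technical obstacle is the stability of the dual potentials under weak* perturbation of the marginal. The qualitative Arzelà--Ascoli route above suffices for this qualitative proposition, but a quantitative argument via contraction of the Sinkhorn map in the Hilbert projective metric is also available in the literature and would yield explicit rates.
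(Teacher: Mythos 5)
Your proposal is correct and follows essentially the same route as the paper's proof: the Hilbert--Schmidt reduction to $L^2(\mu\otimes\mu)$-convergence of kernels, the extension of the dual potentials via the Schrödinger optimality condition, inheritance of the cost's modulus of continuity, Arzelà--Ascoli plus uniqueness of the limit potentials to get uniform convergence of the (extended) $t^{N,\eps}$ to $t^\eps$, and finally a splitting that combines this uniform convergence with the concentration of $\gamma^N$ on the diagonal. The only difference is cosmetic bookkeeping in the final decomposition (you compare the inner integrand to $t^\eps$ rather than to $t^{N,\eps}$), which does not change the substance of the argument.
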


\begin{proof}
In order to estimate the difference of $\hat{T}^{N,\eps}$ and $T^{\eps}$ in the operator norm, let $h\in L^2(\mu)$ be given and estimate as follows:
\begin{align*}
    \|\hat{T}^{N,\eps}h-T^{\eps}h\|_{L^2(\mu)}^2
    &= \int_{\cX} \left|\int_{\cX}\big(\hat{t}^{N,\eps}(y,x)-t^{\eps}(y,x)\big)h(y)\dd\mu(y)\right|^2\dd\mu(x) \\
    &\le \int_{\cX}\left[\int_{\cX}\big(\hat{t}^{N,\eps}(y,x)-t^{\eps}(y,x)\big)^2\dd\mu(y)\int_{\cX}h(y)^2\dd\mu(y)\right]\dd\mu(x) \\
    &= \|\hat{t}^{N,\eps}-t^{\eps}\|_{L^2(\mu \otimes \mu)}^2\|h\|_{L^2(\mu)}^2.
\end{align*}
That is, we have
\begin{align*}
    \|T^{N,\eps}-T^{\eps}\|_{L^2(\mu)\to L^2(\mu)}
    &\le \|\hat{t}^{N,\eps}-t^{\eps}\|_{L^2(\mu \otimes \mu)}.
\end{align*}
Therefore we now proceed to prove that the latter tends to zero.
For each $N$, let $\alpha^N$ and $\beta^N$ be the optimal scaling factors, as specified in \eqref{eq:d-entropicsolution}, for the $\eps$-entropic optimal transport between $\mu^N$ and itself for the cost function $\hat{c} := c(F(\cdot),\cdot)$ and $\theta=\mu^N \otimes \mu^N$. By the optimality condition for $\alpha^N$ \eqref{eq:d-constraint} one has $\mu^N(x)$-almost everywhere
\begin{align}
	\label{eq:AlphaCondition}
	\alpha^N(x) = -\eps \cdot \log\left(\int_{\cX} \exp\left(\frac{-\hat{c}(x,y)+\beta^N(y)}{\eps}\right) \dd \mu^N(y) \right),
\end{align}
cf.~\eqref{eq:EntropicOptimality}. Note that the right-hand side can be evaluated for any $x \in \cX$, not just $\mu^N$-almost everywhere, and thus we can extend $\alpha^N$ to a function $\cX \to \R$.

By compactness of $\cX$ and continuity of $F$, $\hat{c}$ is uniformly continuous on $\cX \times \cX$.
This means there exists a modulus of continuity $\omega:\R_+ \to \R_+$ with $\lim_{w\downarrow0}\omega(w)=0$, such that
\begin{align*}
|\hat{c}(x,y)-\hat{c}(x',y')| \leq \omega(\sqrt{\dst(x,x')^2 + \dst(y,y')^2}).
\end{align*}
One quickly verifies that $\alpha^N$ inherits the modulus of continuity from $\hat{c}$ by estimates of the form
\begin{align*}
	\alpha^N(x') 
	& \leq -\eps \cdot \log\int_{\cX}\exp\left(\frac{-\hat{c}(x,y)-\omega(\dst(x,x'))+\beta^N(y)}{\eps}\right) \dd \mu^N(y) \\
	& = \alpha^N(x) + \omega(\dst(x,x')).
\end{align*}
In the same way we extend the scaling factor $\beta^N$ to $\cX$, also inheriting the modulus of continuity from $\hat{c}$.

Therefore, by fixing the additive shift invariance, e.g.~by fixing $\alpha^N(x_0)=0$ for some fixed $x_0 \in \cX$, the sequences $(\alpha^N)$ and $(\beta^N)$ are uniformly bounded and equicontinuous. By the Arzelà–Ascoli theorem, there exists a uniformly convergent subsequence with limits $\alpha$ and $\beta$. A posteriori, we have uniform convergences $\alpha^N\to\alpha$ and $\beta^N\to\beta$, not just for a subsequence, but for the entire sequence. The reason is that for any chosen subsequences of $(\alpha^N)$ and $(\beta^N)$, the Arzelà–Ascoli theorem still applies and provides uniformly convergent subsubsequences with the same limits $\alpha$ and $\beta$: indeed, by going to the weak-*-limit $\mu$ with $\mu^N$ and to the uniform limits $\alpha$ and $\beta$ with $\alpha^N$ and $\beta^N$ in \eqref{eq:AlphaCondition} and its counterpart for the second marginal, we find that $\alpha$ and $\beta$ are optimal scaling factors for the limit problem between $\mu$ and itself. Therefore, $\alpha$ and $\beta$ are unique, up to global additive constants, and the latter are eliminated by the condition $\alpha(x_0) := \lim_{N \to \infty} \alpha^N(x_0)=0$. 

The extensions of $\alpha^N$ and $\beta^N$ to functions on $\cX$ carries over to 
\[
t^{N,\eps}(x,y)=\exp\left(\frac{-\hat{c}(x,y)+\alpha^{N}(x)+\beta^{N}(y)}{\eps}\right),
\]
i.e.\ we can evaluate $t^{N,\eps}$ on $\cX\times \cX$.
Consequently, the sequence $(t^{N,\eps})_N$ converges uniformly to $t^{\eps}$.
We can now estimate
\begin{align*}
\|\hat{t}^{N,\eps}-t^{\eps}\|_{L^2(\mu \otimes \mu)} & \leq
\|\hat{t}^{N,\eps}-t^{N,\eps}\|_{L^2(\mu \otimes \mu)} + \|t^{N,\eps}-t^{\eps}\|_{L^2(\mu \otimes \mu)}.
\end{align*}
The second term tends to zero as $N \to \infty$ by the uniform convergence of $t^{N,\eps} \to t^{\eps}$ on $\cX \times \cX$.
For the first term we estimate, by using the representation \eqref{eq:THatExplicit} and Jensen's inequality, 
\begin{align}
    \nonumber
	\|\hat{t}^{N,\eps}-t^{N,\eps}\|_{L^2}^2
	& = \int_{\cX \times \cX}  \left| \int_{\cX \times \cX} [t^{N,\eps}(v,w)-t^{N,\eps}(x,y)]
	\dd \gamma^N_x(v) \dd \gamma^N_y(w) \right|^2 \dd \mu(x) \dd \mu(y) \\
	\label{eq:tdiff}
	& \leq \int_{\cX^4} \left| t^{N,\eps}(v,w)-t^{N,\eps}(x,y) \right|^2 \dd \gamma^N(x,v) \dd \gamma^N(y,w) .
\end{align}
Since $\mu^N \rightweaks \mu$ by hypothesis, the plans $\gamma^N$ converge weakly-* to the optimal plan $\gamma$ for the transport of $\mu$ to itself, which is concentrated on the diagonal of $\cX$, see Section \ref{sec:OTKantorovich}. On the other hand, uniform convergence of $t^{N,\eps}$ to $t^\eps$ on $\cX\times\cX$ implies \begin{align*}
    \left| t^{N,\eps}(v,w)-t^{N,\eps}(x,y) \right|^2 \to \left| t^{\eps}(v,w)-t^{\eps}(x,y) \right|^2
\end{align*}
uniformly with respect to $(x,y,v,w)\in\cX^4$. Since the limit expression vanishes for $v=x$ and $w=y$, the bound in \eqref{eq:tdiff} tends to zero, implying the convergence of $\hat t^{N,\eps}-t^\eps$ to zero in $L^2(\mu\otimes\mu)$.
\end{proof}

\subsection{Compactness and convergence of spectrum}
\label{sec:ConvergenceSpectra}

\begin{proposition}
For fixed $\eps > 0$, the operator $T^\eps:L^2(\mu)\to L^2(\mu)$ is compact.
\end{proposition}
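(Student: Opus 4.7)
The strategy is to realize $T^\eps$ as a Hilbert--Schmidt integral operator on $L^2(\mu)$ by showing its kernel $t^\eps$ is continuous (hence bounded) on $\cX\times\cX$; this immediately gives compactness.

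First, I would establish that the Lagrange multipliers $\alpha,\beta:\cX\to\R$ associated with the optimal $\eps$-entropic plan between $F_\#\mu$ and $\mu$ admit bounded, continuous representatives on all of $\cX$. This is the only non-trivial step, but the argument is exactly the one already carried out in the proof of Proposition~\ref{prop:Conv} (for the sequence $\alpha^N,\beta^N$), now applied directly to the limit problem: the Schrödinger-type fixed-point equation \eqref{eq:EntropicOptimality},
\begin{equation*}
\alpha(x) = -\eps\log\int_{\cX}\exp\!\left(\frac{-c(x,y)+\beta(y)}{\eps}\right)\dd\mu(y),
\end{equation*}
and its counterpart for $\beta$, make sense for every $x\in\cX$ (not only $F_\#\mu$- or $\mu$-a.e.), and the standard log-sum-exp estimate shows that $\alpha$ and $\beta$ inherit the modulus of continuity of $c$. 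Since $(\cX,\dst)$ is compact and $c$ is continuous, $c$ is uniformly continuous, so $\alpha$ and $\beta$ are continuous on $\cX$, and after fixing the additive normalization they are bounded.

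Second, by composition with the continuous map $F$, the kernel
\begin{equation*}
t^\eps(x,y)=\exp\!\left(\frac{-c(F(x),y)+\alpha(F(x))+\beta(y)}{\eps}\right)
\end{equation*}
from \eqref{eq:gFdens}--\eqref{eq:tdens} is continuous on the compact space $\cX\times\cX$, hence $t^\eps\in L^\infty(\mu\otimes\mu)\subset L^2(\mu\otimes\mu)$ because $\mu\otimes\mu$ is a finite measure.

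Finally, since $T^\eps$ acts as $(T^\eps h)(y)=\int_\cX h(x)\,t^\eps(x,y)\dd\mu(x)$ by \eqref{eq:Tentropic}, and its kernel is square-integrable on $\cX\times\cX$ with respect to $\mu\otimes\mu$, $T^\eps$ is a Hilbert--Schmidt operator on $L^2(\mu)$ with
$\|T^\eps\|_{\mathrm{HS}}=\|t^\eps\|_{L^2(\mu\otimes\mu)}$. Every Hilbert--Schmidt operator on a separable Hilbert space is compact, which proves the claim. The only conceptual obstacle is the boundedness/continuity of $\alpha,\beta$; everything else is a standard integral-kernel argument.
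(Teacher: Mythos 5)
Your proposal is correct and follows essentially the same route as the paper: the paper likewise observes that $T^\eps$ is an integral operator whose kernel $g^\eps(F(\cdot),\cdot)$ is bounded, hence square-integrable on the compact set $\cX\times\cX$, and concludes compactness via the Hilbert--Schmidt property. The only difference is that you spell out the boundedness of the kernel through the continuity of the multipliers $\alpha,\beta$ (reusing the argument from Proposition~\ref{prop:Conv}), whereas the paper simply asserts that $g^\eps$ is bounded.
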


\begin{proof}
As shown in \eqref{eq:Tentropic}, $T^\eps$ is given by
\begin{align*}
	(T^{\eps} h)(y) & = \int_{\cX} g^{\eps}(F(x),y) \; h(x) \dd \mu(x).
\end{align*}
Note  that $g^\eps$ is bounded and thus square integrable on the compact set $\cX\times\cX$.  Therefore, $T^\eps$ is compact \cite[X.2]{Yosida:80}. 
\end{proof}

Compactness of $T^\eps$ together with convergence of $\hat T^{N,\eps}$ to $T^\eps$ in the $L^2$-operator norm yields that the eigenpairs of $\hat T^{N,\eps}$ converge to eigenpairs of $T^\eps$ as $N\to\infty$:

\begin{lemma}\cite[Lemma 2.2]{bramble1973rate}    
Let $\lambda^\eps$ be a nonzero eigenvalue of $T^\eps$ with algebraic multiplicity $m$ and let $\Gamma$ be a circle centered at $\lambda^\eps$ which lies in the resolvent set of $T^\eps$ and contains no other points of the spectrum of $T^\eps$. Then, there is an $N_0>0$ such that for all $N>N_0$, there are exactly $m$ eigenvalues (counting algebraic multiplicities) of $\hat T^{N,\eps}$ inside $\Gamma$ and no eigenvalue on $\Gamma$. 
\end{lemma}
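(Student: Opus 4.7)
The plan is to use the Riesz spectral projection formalism, which is the standard tool for perturbation theory of compact operators and is exactly the approach used in \cite{bramble1973rate}. Define the spectral projection onto the generalized eigenspace associated with $\lambda^\eps$ by
\begin{align*}
P^\eps := \frac{1}{2\pi i} \oint_\Gamma (z - T^\eps)^{-1} \, \dst z.
\end{align*}
Since $T^\eps$ is compact (by the preceding proposition) and $\lambda^\eps$ is isolated inside $\Gamma$ with algebraic multiplicity $m$, the projection $P^\eps$ has rank $m$. The analogous projection for the approximation,
\begin{align*}
P^{N,\eps} := \frac{1}{2\pi i} \oint_\Gamma (z - \hat T^{N,\eps})^{-1} \, \dst z,
\end{align*}
will only be well-defined once we know that $\Gamma \subset \rho(\hat T^{N,\eps})$.

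The first step is to show precisely that $\Gamma \subset \rho(\hat T^{N,\eps})$ for all sufficiently large $N$. Since $\Gamma$ is compact and contained in the resolvent set $\rho(T^\eps)$, the map $z \mapsto (z - T^\eps)^{-1}$ is continuous on $\Gamma$ and hence uniformly bounded there by some constant $M$. By Proposition~\ref{prop:Conv}, $\opn{\hat T^{N,\eps} - T^\eps} \to 0$, so for $N$ large enough $\opn{\hat T^{N,\eps} - T^\eps} \le 1/(2M)$, and the Neumann series
\begin{align*}
(z - \hat T^{N,\eps})^{-1} = (z - T^\eps)^{-1} \sum_{k=0}^\infty \bigl[(\hat T^{N,\eps} - T^\eps)(z - T^\eps)^{-1}\bigr]^k
\end{align*}
converges uniformly in $z \in \Gamma$. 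This simultaneously yields invertibility of $z - \hat T^{N,\eps}$ on all of $\Gamma$ (proving the ``no eigenvalue on $\Gamma$'' part of the claim) and uniform norm-convergence of the resolvents. Integrating over $\Gamma$ immediately gives $\opn{P^{N,\eps} - P^\eps} \to 0$.

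The final step is to conclude that $\operatorname{rank}(P^{N,\eps}) = m$ for large $N$, which is exactly the count of eigenvalues inside $\Gamma$ weighted by algebraic multiplicity, by the standard characterization of range of a Riesz projection. This rests on the classical fact that two bounded projections $P, Q$ with $\opn{P - Q} < 1$ necessarily have the same rank; one concrete proof constructs the operator $U := QP + (I - Q)(I - P)$, notes that $U - I = (2Q - I)(P - Q)$ has norm $<1$ so $U$ is invertible, and checks that $U$ maps $\operatorname{Ran}(P)$ bijectively onto $\operatorname{Ran}(Q)$. Applying this with $P = P^\eps$ and $Q = P^{N,\eps}$ for $N$ large enough that $\opn{P^{N,\eps} - P^\eps} < 1$ gives $\operatorname{rank}(P^{N,\eps}) = m$, completing the proof.

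The main obstacle to watch for is the rank-invariance step: one must be careful that the Riesz projection of $\hat T^{N,\eps}$ really does count algebraic multiplicities of $\hat T^{N,\eps}$ inside $\Gamma$. This is automatic once $\hat T^{N,\eps}$ is compact (or finite-rank, which is the case here since $\hat T^{N,\eps} = P^{N*} T^{N,\eps} P^N$ factors through the $N$-dimensional space $L^2(\mu^N)$), so no additional genericity hypothesis is needed.
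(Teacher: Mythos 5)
Your proposal is correct and is the standard resolvent/Riesz-projection argument; the paper does not prove this lemma itself but cites it from Bramble--Osborn, where essentially this same proof (uniform resolvent bounds on $\Gamma$, Neumann series, norm convergence of the spectral projections, and rank invariance of nearby projections) is given. One cosmetic point: with your particular factorization $U-I=(2Q-I)(P-Q)$, the hypothesis $\opn{P^{N,\eps}-P^\eps}<1$ alone does not force $\opn{U-I}<1$ (one needs $\opn{2Q-I}\,\opn{P-Q}<1$, or the cleaner identity $UV=VU=I-(P-Q)^2$), but since $\opn{P^{N,\eps}-P^\eps}\to0$ the conclusion holds for all sufficiently large $N$ regardless.
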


\begin{theorem}\cite[Theorem 5]{Osborn75}
Let $\hat\lambda^{N,\eps}$ be an eigenvalue of $\hat T^{N,\eps}$ such that $\hat\lambda^{N,\eps}\to\lambda^\eps$ for $N\to\infty$.  For each $N$, let $\hat\varphi^{N,\eps}$ be an eigenvector of $\hat T^{N,\eps}$ at $\hat\lambda^{N,\eps}$.  Then there is a generalized eigenvector $\varphi^\eps_N$ of $T^\eps$ at $\lambda^\eps$ and a constant $C>0$ such that
\[
\|\hat\varphi^{N,\eps}-\varphi^\eps_N\|_{L^2} \leq C \|\hat T^{N,\eps}-T^\eps\|_{L^2}.
\]
\end{theorem}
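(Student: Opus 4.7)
The plan is to deduce the estimate from a classical perturbation argument for the spectral (Riesz) projection, using that $T^\eps$ is compact, that $\Gamma$ separates $\lambda^\eps$ from the rest of $\sigma(T^\eps)$ (previous lemma), and that $\hat{T}^{N,\eps}\to T^\eps$ in operator norm (Proposition \ref{prop:Conv}). Throughout, we may normalize so that $\|\hat\varphi^{N,\eps}\|_{L^2(\mu)}=1$.

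First, I would introduce the Riesz projections
\begin{equation*}
E^\eps := \frac{1}{2\pi i}\oint_\Gamma (zI-T^\eps)^{-1}\,dz, \qquad
\hat E^{N,\eps} := \frac{1}{2\pi i}\oint_\Gamma (zI-\hat T^{N,\eps})^{-1}\,dz,
\end{equation*}
with the same contour $\Gamma$ used in the previous lemma. The range of $E^\eps$ is precisely the generalized eigenspace of $T^\eps$ at $\lambda^\eps$, and by the previous lemma the range of $\hat E^{N,\eps}$ (for $N$ large enough) is the span of the generalized eigenvectors of $\hat T^{N,\eps}$ whose eigenvalues lie inside $\Gamma$; in particular $\hat\varphi^{N,\eps}$ lies in this range, so $\hat E^{N,\eps}\hat\varphi^{N,\eps}=\hat\varphi^{N,\eps}$.

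Second, the projection $\varphi^\eps_N := E^\eps\hat\varphi^{N,\eps}$ is a generalized eigenvector of $T^\eps$ at $\lambda^\eps$ (or zero, which is a trivial generalized eigenvector). Then
\begin{equation*}
\hat\varphi^{N,\eps}-\varphi^\eps_N = \bigl(\hat E^{N,\eps}-E^\eps\bigr)\hat\varphi^{N,\eps},
\end{equation*}
so it suffices to establish the operator-norm bound
\begin{equation*}
\bigl\|\hat E^{N,\eps}-E^\eps\bigr\|_{L^2\to L^2} \;\le\; C\,\bigl\|\hat T^{N,\eps}-T^\eps\bigr\|_{L^2\to L^2}.
\end{equation*}
This is obtained from the second resolvent identity
\begin{equation*}
(zI-\hat T^{N,\eps})^{-1} - (zI-T^\eps)^{-1}
= (zI-\hat T^{N,\eps})^{-1}\bigl(\hat T^{N,\eps}-T^\eps\bigr)(zI-T^\eps)^{-1},
\end{equation*}
integrated over $\Gamma$ and combined with uniform bounds on both resolvents along~$\Gamma$. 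Since $\Gamma\subset\rho(T^\eps)$ is compact, $M:=\sup_{z\in\Gamma}\|(zI-T^\eps)^{-1}\|$ is finite. For $N$ large enough that $\|\hat T^{N,\eps}-T^\eps\|\le 1/(2M)$, a Neumann-series expansion
\begin{equation*}
(zI-\hat T^{N,\eps})^{-1} = (zI-T^\eps)^{-1}\bigl(I-(\hat T^{N,\eps}-T^\eps)(zI-T^\eps)^{-1}\bigr)^{-1}
\end{equation*}
gives the uniform bound $\sup_{z\in\Gamma}\|(zI-\hat T^{N,\eps})^{-1}\|\le 2M$. Plugging this into the contour-integral representation of $\hat E^{N,\eps}-E^\eps$ yields the desired estimate with $C = (2M)^2\,|\Gamma|/(2\pi)$, and combining with $\|\hat\varphi^{N,\eps}\|=1$ finishes the argument.

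The main obstacle is precisely the uniform-resolvent step: it is the only place where one must exploit that the perturbation is small in operator norm and that $\Gamma$ avoids the spectrum of the limit operator. Everything else --- defining $\varphi^\eps_N$, showing it is a generalized eigenvector, and extracting the final estimate --- is then essentially bookkeeping with the spectral projection $E^\eps$. Note that $\varphi^\eps_N$ depends on $N$ (through $\hat\varphi^{N,\eps}$) and may vanish if $\hat\varphi^{N,\eps}$ happens to lie in $\ker E^\eps$, so to guarantee a nontrivial conclusion one would further argue that $\|\varphi^\eps_N\|\to 1$, which again follows from $\|\hat E^{N,\eps}-E^\eps\|\to 0$.
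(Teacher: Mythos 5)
Your argument is correct and is essentially the classical spectral-projection proof that the paper invokes by citing Osborn's Theorem 5: set $\varphi^\eps_N=E^\eps\hat\varphi^{N,\eps}$, use $\hat E^{N,\eps}\hat\varphi^{N,\eps}=\hat\varphi^{N,\eps}$, and bound $\|\hat E^{N,\eps}-E^\eps\|$ by the second resolvent identity together with the uniform resolvent bound on $\Gamma$ obtained from the Neumann series once $\|\hat T^{N,\eps}-T^\eps\|$ is small. Your closing remarks (normalizing $\|\hat\varphi^{N,\eps}\|=1$, and noting $\|\varphi^\eps_N\|\to1$ so the generalized eigenvector is eventually nontrivial) address exactly the points that need care, so nothing is missing.
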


Note that the construction of $T^{N,\eps}$ and $\hat{T}^{N,\eps}$ can also be performed in the  case $\eps=0$.  In that case, however, $T^0=T$ is in general not compact and the convergence statement is not applicable. In special settings, $T=T^0$  can be shown to be quasi compact \cite{LasotaYorke:74} and there are convergence results, e.g. for Ulam's method for expanding intervals maps~\cite{li1976finite}.

\subsection{\texorpdfstring{Relation between the spectra of $T^{N,\eps}$ and $\hat{T}^{N,\eps}$}{Relation between the spectra of TNeps and hatTNeps}}

\label{sec:SpectrumRelation}

Recall the optimal transport plan $\gamma^N$ from $\mu$ to $\mu^N$ and the induced operator $P^N : L^p(\mu) \to L^p(\mu^N)$ as introduced in Section \ref{sec:Convergence}. Consider the case where $\gamma^N_x = \delta_{\Phi(x)}$ for $\mu$-a.e.~$x$, that is, for almost all $x$, all the mass of $\mu$ at $x$ is transported to $\Phi(x)$ for some measurable $\Phi : \cX \to \cX$. This map $\Phi$ then solves the Monge problem \eqref{eq:monge}. It exists, for instance, when $\cX \subset \R^d$ and $\mu$ is dominated by the Lebesgue measure.
In this case $(P^{N\ast}h)(x)=h(\Phi(x))$ $\mu$-a.e., i.e.~the function $P^{N\ast} h$ is piecewise constant on preimages of points under $\Phi$ (these preimages may be individual points, in which case piecewise constancy is irrelevant). Conversely, for a function $h \in L^2(\mu)$ that is piecewise constant on preimages of single points under $\Phi$, we find $(P^Nh)(x)=h(\Phi^{-1}(x))$. The operator $P^N$ is zero on the orthogonal complement of the space of such functions.
Therefore, for each eigenpair $(h,\lambda) \in L^2(\mu^N) \otimes \C$ of $T^{N,\eps}$, $(h \circ \Phi,\lambda)$ is an eigenpair of $\hat{T}^{N,\eps}$ and all other eigenvalues of $\hat{T}^{N,\eps}$ are zero.
In this case, when $\mu^N$ is supported on a finite number of points, the spectrum of $\hat{T}^{N,\eps}$ (which converges to that of $T^\eps$), can be studied via the discrete matrix representation of $T^{N,\eps}$.

It might happen however that $\gamma^N$ is not induced by a map $\Phi$, in which case the direct correspondence between the spectra of $T^{N,\eps}$ and $\hat{T}^{N,\eps}$ fails. When $\cX \subset \R^d$ and $\mu$ is atomless this can be remedied as follows: By \cite[Theorem 1.32]{Santambrogio-OTAM} for each $N$ there exists a transport plan $\tilde{\gamma}^N$ that is induced by a map $\Phi^N$ such that $\int_{\cX^2} c(x,y) \dd \tilde{\gamma}^N(x,y)=\int_{\cX} c(x,\Phi^N(x))\dd \mu(x) \to 0$ as $N \to \infty$. Let $\tilde{P}^N$ be the corresponding transfer operator, which can be used to define an alternative extension $\tilde{T}^{N,\eps}$ of $T^{N,\eps}$. As argued above, these two operators will have the same non-zero eigenvalues and the corresponding eigenvectors are connected via $\Phi^N$. By the same arguments as in Proposition \ref{prop:Conv} one will find $\tilde{T}^{N,\eps} \to T^{\eps}$ in operator norm. Hence, the spectrum of $T^{N,\eps}$ still converges to that of $T^{\eps}$.

\section{Proof of concept --- analysis of the shift map on the torus}
\label{sec:shift map}
Below, we use the following short-hand notation for vectors $v=(v_1,\ldots,v_d)\in\C^d$:
\[ [v]^2= v^Tv = \sum_{\alpha=1}^d v_\alpha^2. \]
If $v\in\R^d\subset\C^d$, then $[v]^2=|v|^2$, but in general, $[v]^2$ is a complex number.

\subsection{\texorpdfstring{The shift map on the $d$-torus}{The shift map on the d-torus}}
On the $d$-dimensional torus $\cX=\R^d/\Z^d$, we consider the shift map $F(x)=x+\theta$ with a fixed vector $\theta\in(-1/2,+1/2)^d$. The uniform measure $\mu$ on $\cX$ is clearly invariant under $F$. 
\begin{remark}
    If $\theta$ happens to be \emph{irrational}, then $\mu$ is the \emph{unique} invariant $F$-measure, and $F$ is quasi-periodic on $\cX$. For the discussion below, these properties are not important.
\end{remark}
The transfer operator is given by
\[(T h)(x) = h(x-\theta).\]
A complete basis of eigenfunctions is formed by $\varphi_k(x)=e^{2\pi ik\cdot x}$ for $k\in\Z^d$, with respective eigenvalues
\begin{align}
    \label{eq:origlambda}
    \lambda_k=e^{-2\pi ik\cdot\theta}. 
\end{align}
Indeed,
\begin{align*}
    \big(T \varphi_k\big)(x)
    = \varphi_k(x-\theta) 
    = e^{2\pi ik\cdot(x-\theta)} 
    = e^{-2\pi ik\cdot\theta}\varphi_k(x).
\end{align*}
For the distance of points $x,x'\in\cX$, we use
\[\dst(x,x') := \min_{k\in\Z^d}|\hat x'-\hat x+k|,\]
where $\hat x,\hat x'\in\R^d$ are any representatives of $x,x'$, and $|\cdot|$ is the Euclidean distance on $\R^d$.
Equivalently, define the fundamental domain \[\cub:=(-1/2,+1/2]^d. \]
Choose representatives $\hat x,\hat x'\in\R^d$ of $x,x'\in\cX$ such that $\hat x'\in\hat x+\cub$. Then $\dst(x,x') = |\hat x-\hat x'|$.

\subsection{Spectrum of the regularized transfer operator}
In the example under consideration, the kernel $t^{\eps}$ can be calculated almost explicitly, due to the homogeneity and periodicity of the base space. It follows that the scaling factors $\alpha$ and $\beta$ are constant in space\footnote{Because $\alpha$ and $\beta$ are (up to constant shifts) the unique maximizers of a dual problem of \eqref{eq:entropic}, and the latter is invariant under translations, $\alpha$ and $\beta$ must be constant.}, and thus by \eqref{eq:gFdens} and \eqref{eq:tdens}:
\begin{align}
	\label{eq:tTorus}
    t^{\eps}(x,y) = g^{\eps}(F(x),y)) = \frac1{Z_\eps}e^{-\dst^2(x+\theta,y)/\eps},
    \quad
    Z_\eps := \eps^{d/2}\int_{\eps^{-1/2}\cub}e^{-|\zeta|^2}\dd\zeta.
\end{align}
Provided $y\in x+\theta+\cub$, this can be written more explicitly:
\begin{align*}
    t^{\eps}(x,y) = \frac1{Z_\eps}e^{-|x+\theta-y|^2/\eps}.
\end{align*}
\begin{proposition}\label{prop:eigenvalue asymptotics}
    For any $\eps>0$, a complete system of eigenfunctions of $T^{\eps}$ is given by the $\varphi_k$. The respective eigenvalues $\lambda^\eps_k$ satisfy
    \begin{align}
        \label{eq:ev1}
        \big|\lambda^\eps_k - e^{-\pi^2\eps|k|^2}\lambda_k\big| \le 2^{d+1}e^{-\frac1{8\eps}},
    \end{align}
    uniformly for $0<\eps<1/(8(d+2)\ln 2)$. Above, the $\lambda_k$ are given by \eqref{eq:origlambda}.
\end{proposition}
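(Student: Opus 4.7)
The plan is to exploit translation invariance on $\cX=\R^d/\Z^d$ to diagonalise $T^{\eps}$ explicitly in the Fourier basis $\{\varphi_k\}_{k\in\Z^d}$, and then compare the resulting eigenvalue formula to the Gaussian Fourier transform on all of $\R^d$. The tail of the Gaussian outside the fundamental domain $\cub$ will produce the error term, which is exponentially small in $1/\eps$.

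First I would substitute the explicit kernel $t^{\eps}(x,y)=Z_\eps^{-1}\exp(-\dst^2(x+\theta,y)/\eps)$ from \eqref{eq:tTorus} into \eqref{eq:Tentropic} applied to $\varphi_k(x)=e^{2\pi i k\cdot x}$. After the change of variable $z=y-\theta-x\in\cub$, the kernel becomes $Z_\eps^{-1} e^{-|z|^2/\eps}$ and the phase factorises as $e^{2\pi ik\cdot(y-\theta-z)}=\varphi_k(y)\lambda_k e^{-2\pi ik\cdot z}$, yielding
\[
T^{\eps}\varphi_k=\lambda^\eps_k\varphi_k,\qquad \lambda^\eps_k=\lambda_k\cdot M_\eps(k),\qquad M_\eps(k):=\frac{1}{Z_\eps}\int_{\cub}e^{-2\pi ik\cdot z}e^{-|z|^2/\eps}\dd z.
\]
Since $\{\varphi_k\}_{k\in\Z^d}$ is an orthonormal basis of $L^2(\mu)$, this gives a complete system of eigenfunctions and so the full spectrum.

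Second, I would compare $M_\eps(k)$ to $e^{-\pi^2\eps|k|^2}$ by writing the cube integral as a full-space Fourier transform of a Gaussian minus a tail: since $\int_{\R^d}e^{-2\pi ik\cdot z}e^{-|z|^2/\eps}\dd z=(\pi\eps)^{d/2}e^{-\pi^2\eps|k|^2}$, one obtains
\[
Z_\eps=(\pi\eps)^{d/2}-A,\qquad \int_{\cub}e^{-2\pi ik\cdot z}e^{-|z|^2/\eps}\dd z=(\pi\eps)^{d/2}e^{-\pi^2\eps|k|^2}-E_k,
\]
with $A:=\int_{\R^d\setminus\cub}e^{-|z|^2/\eps}\dd z$ and $|E_k|\le A$. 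A direct algebraic rearrangement then gives $|M_\eps(k)-e^{-\pi^2\eps|k|^2}|\le 2A/[(\pi\eps)^{d/2}-A]$, so the whole problem reduces to an estimate of the Gaussian tail mass $A$ outside the cube.

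Third, I would bound $A$ using the union bound over the $d$ slabs $\{|z_\alpha|>1/2\}$ together with the elementary one-dimensional estimate $\int_{1/2}^{\infty}e^{-s^2/\eps}\dd s\le \eps e^{-1/(4\eps)}$ (obtained via $1/s\le 2$ on the integration range), which produces an inequality of the form $A\le 2d\,\eps\,(\pi\eps)^{(d-1)/2}e^{-1/(4\eps)}$. Dividing by $(\pi\eps)^{d/2}$ shows $A/(\pi\eps)^{d/2}=O_d(\sqrt\eps\,e^{-1/(4\eps)})$, hence $A\le \tfrac12(\pi\eps)^{d/2}$ for all sufficiently small $\eps$, and the ratio $2A/[(\pi\eps)^{d/2}-A]$ is controlled by a constant times $\sqrt\eps\,e^{-1/(4\eps)}$.

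The only real obstacle is bookkeeping of constants: the qualitative bound $|\lambda^\eps_k-e^{-\pi^2\eps|k|^2}\lambda_k|=O_d(e^{-1/(4\eps)})$ is essentially immediate from the tail estimate, but extracting the stated clean form $2^{d+1}e^{-1/(8\eps)}$ uniformly in $k$ requires that the $d$-dependent prefactor $C_d\sqrt\eps$ be absorbed into the gain from replacing $e^{-1/(4\eps)}$ by $e^{-1/(8\eps)}$. The precise range $\eps<1/(8(d+2)\ln 2)$ is chosen so that $e^{-1/(8\eps)}<2^{-(d+2)}$, which both ensures $A\ll(\pi\eps)^{d/2}$ and lets the remaining constant be swallowed; I would verify this by a direct elementary comparison in the final step.
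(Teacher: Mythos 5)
Your proposal is correct and follows essentially the same route as the paper: diagonalise $T^\eps$ in the Fourier basis via translation invariance, express $\lambda^\eps_k$ as a ratio of a Gaussian integral over the fundamental domain to the normalization $Z_\eps$, and control both by the Gaussian tail outside the cube, which is exponentially small in $1/\eps$. The only (minor) technical difference is that you keep the oscillatory factor $e^{-2\pi ik\cdot z}$ and bound the tail by its modulus, obtaining a $k$-independent tail $A$ directly, whereas the paper first completes the square to $e^{-[\zeta-i\sqrt\eps\pi k]^2}$ and then cancels the resulting $e^{\eps\pi^2|k|^2}$ factor against the prefactor $e^{-\eps\pi^2|k|^2}$; both variants yield the claimed $k$-uniform bound.
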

This result shows the interplay of two length scales. Based on \eqref{eq:tTorus} we see that $T^\eps$ acts locally like a translation by $\theta$ (given by $F$ or $T$) and a subsequent convolution with a Gaussian kernel of width $\sqrt{\eps}$, which is the length scale of the blur introduced by entropic optimal transport. On the other side, $1/|k|$ is the periodicity length scale of the eigenfunction $\varphi_k$. Proposition \ref{prop:eigenvalue asymptotics} implies that for small $\eps>0$, the corresponding eigenvalues of $T^\eps$ and of $T$ are similar for eigenfunctions with length scale above the blur scale, i.e.~$\sqrt{\eps}\ll 1/|k|$.
\begin{proof}
    Clearly, each $\varphi_k$ is an eigenfunction since
    \begin{align*}
        \big(T^{\eps}\varphi_k\big)(y)
        & = \int_{\cX}e^{2\pi ik\cdot x}\,t^{\eps}(x,y)\dd x \\
        &= \frac1{Z_\eps}\int_{y-\theta+\cub}e^{-|x-y+\theta|^2/\eps+2\pi ik\cdot x}\dd x \\
        &= \left(\frac1{Z_\eps}\int_{y-\theta+\cub}\exp
        \left(-\frac1{\eps}|x-y+\theta|^2+2\pi ik\cdot(x-y+\theta)\right)
        \dd x\right)e^{2\pi ik\cdot(y-\theta)} \\
        &= \left(\frac1{Z_\eps}\int_\cub\exp\left(-\left[\frac z{\sqrt{\eps}}-\sqrt{\eps}\pi ik\right]^2\right) e^{-\eps\pi^2|k|^2}\dd z\right)\lambda_k\varphi_k(y) \\
        &= \left(\frac{\sqrt{\eps^d}}{Z_\eps}\int_{\eps^{-1/2}\cub} e^{-[\zeta-i\sqrt\eps\pi k]^2}\dd\zeta\right)e^{-\eps\pi^2|k|^2}\lambda_k\varphi_k(y).
    \end{align*}
    To prove the claimed asymptotics, we start by observing that for each $\zeta,\tau\in\R^d$ with $\zeta\in\R^d\setminus\eps^{-1/2}\cub$:
    \begin{align}
    	\label{eq:ZetaTau}
        \big|e^{-[\zeta-i\tau]^2}\big|
        = e^{|\tau|^2-|\zeta|^2}
        \le e^{|\tau|^2}e^{-\frac1{8\eps}}e^{-\frac12|\zeta|^2},
    \end{align}
    since $|\zeta|^2\ge\frac1{4\eps}$.
    Using that, independently of $\tau$,
    \begin{align}
        \label{eq:complexmagic}
        \int_{\R^d}e^{-[\zeta-i\tau]^2}\dd\zeta = \pi^{d/2},
    \end{align}
    we thus obtain the following bound:
    \begin{equation}\label{eq:iii}
        \begin{split}
            \left|\pi^{d/2}-\int_{\eps^{-1/2}\cub} e^{-[\zeta-i\tau]^2}\dd \zeta\right|
            &=\left|\int_{\R^d\setminus\eps^{-1/2}\cub} e^{-[\zeta-i\tau]^2}\dd \zeta\right| \\
            &\le e^{|\tau|^2}e^{-\tfrac{1}{8\eps}}\int_{\R^d} e^{-\frac12[\zeta]^2}\dd\zeta
            = (2\pi)^{d/2}e^{|\tau|^2}e^{-\frac1{8\eps}}.
        \end{split}
    \end{equation}
    Apply this with $\tau=0$ to obtain
    \begin{align*}
        Z_\eps \ge (\pi\eps)^{d/2}-(2\pi\eps)^{d/2}e^{-\frac1{8\eps}} = (\pi\eps)^{d/2}\big(1-2^{d/2}e^{-\frac1{8\eps}}\big),
    \end{align*}
    and in particular we have $Z_\eps\ge(\pi\eps/2)^{d/2}>0$ for all $\eps>0$ in the range specified in the claim.
    Apply \eqref{eq:iii} again, now with $\tau=\sqrt\eps\pi k$, to obtain
    \begin{align*}
        \left|(\pi\eps)^{d/2}-{\eps}^{d/2}\int_{\eps^{-1/2}\cub} e^{-[\zeta-i\sqrt\eps\pi k]^2}\dd\zeta\right|
        &\le (2\pi\eps)^{d/2}e^{\eps\pi^2|k|^2}e^{-\frac1{8\eps}}.
    \end{align*}
    Divide this inequality by $Z_\eps$. Using that $Z_\eps^{-1}\le(\pi\eps/2)^{-d/2}$, and using \eqref{eq:iii}, it follows that for $\tau=0$,
    \begin{align*}
        \left|1-\frac{(\pi\eps)^{d/2}}{Z_\eps}\right| \le \frac{(2\pi\eps)^{d/2}e^{-1/(8\eps)}}{Z_\eps} \le 2^{d} e^{-\frac1{8\eps}},
    \end{align*}
    on the other hand, and finally that $e^{\pi^2\eps|k|}\ge1$, we conclude that
    \begin{align*}
        \left|1-\frac{\eps^{d/2}}{Z_\eps}\int_{\eps^{-1/2}\cub} e^{-[\zeta-i\sqrt\eps\pi k]^2}\dd\zeta\right|
        \le 2^{d+1}e^{\eps\pi^2|k|^2}e^{-\frac1{8\eps}}.
    \end{align*}
    Since $|\lambda_k|=1$, this yields the claim \eqref{eq:ev1}.
\end{proof}

\subsection{Spectrum of the discretized regularized transfer operator}
\label{subsec:spectrum_discretized}
For the discretization, we consider a regular lattice with $N=n^d$ points $x^N_j$ in the fundamental domain $\cub$. Specifically, let $J_N\subset\Z^d$ be the set of indices $j=(j_1,\ldots,j_d)$ such that
\[x^N_j=(j_1/n,\ldots,j_d/n)\in\cub.\]
All points $x^N_j$ are given equal weight $m^N_j=1/N$, thus 
\[\mu^N = \frac1N\sum_{j\in\Z_n^d}\delta_{x^N_j}.\]
Based on \eqref{eq:AlmostGamma} we introduce the matrix $\Gamma^{N,\eps}$ via
\begin{align*}
\Gamma^{N,\eps}_{\ell,m} := t^{N,\eps}(x^N_m,x^N_\ell)\,m^N_m,
\end{align*}
such that the matrix-vector product with $\Gamma^{N,\eps}$ corresponds to the application of $T^{N,\eps}$, i.e.,
\[
\big(\Gamma^{N,\eps} v\big)_\ell =  \big(T^{N,\eps} h\big)(x_\ell^N),
\]
where $v\in\R^{\Z_n^d}$ with $v_k=h(x^N_k)$.
Spectral analysis of $T^{N,\eps}$ then is equivalent to spectral analysis of $\Gamma^{N,\eps}$.
As before, the high symmetry of the problem implies that the weight factors $\alpha$ and $\beta$ in the representation \eqref{eq:gFdens} are actually global constants, and thus
\begin{align*}
    \Gamma^{N,\eps}_{\ell,m} = \frac1{N\,Z^{N,\eps}}\exp\left(-\frac1\eps\dst(x^N_m+\theta,x^N_\ell)^2\right),
\end{align*}
with a normalization constant 
\begin{align}
    \label{eq:Zxmp}
    Z^{N,\eps} = 
    \frac1N\sum_{m\in J_N} \exp\left(-\frac1\eps\dst(x^N_m+\theta,0)^2\right).
\end{align}
In order to see that $Z^{N,\eps}$ has this simple form, recall that the distance function $\dst$ is $d$-fold periodic in both entries, and that the points $(x^N_\ell)_{\ell\in J_N}$ are part of a $d$-fold periodic lattice. Therefore, the sum with respect to $m\in J_N$ of the exponential values depending on $\dst(x^N-m+\theta,x^N_\ell)$ is actually independent of the chosen index $\ell$, and in particular, one may choose $\ell=0$.
\begin{proposition}\label{prop:eigenvalue asymptotics2}
    A complete system of eigenvectors for $\Gamma^{N,\eps}$ is given by $v^N_k$ for $k\in J_N$, where
    \[ \big(v^N_k\big)_j = e^{2\pi ik\cdot j/n}.\]
    The associated eigenvalues $\lambda^{N,\eps}_k$ satisfy
    \begin{align}
        \label{eq:ev2}
        \big|\lambda^{N,\eps}_k-e^{-\pi^2\eps|k|^2}\lambda_k\big|
        &\le \frac{K_d}{n^2\eps}\big(1+\eps|k|^2+n^2\eps e^{-1/(8\eps)}\big) , \\
        \label{eq:ev3}
        \big|\lambda^{N,\eps}_k|&\le \frac{K_d}{\eps|k|^2}\big(1+n^2\eps\,e^{-1/(32\eps)}\big),
    \end{align}
    provided that $\eps\le\bar\eps_d$ and $(n^2\eps)^{-1}\le\bar h_d$, with positive constants $K_d$, $\bar\eps_d$ and $\bar h_d$ that depend only on the dimension $d$. Above, the $\lambda_k$ are given by \eqref{eq:origlambda}.
\end{proposition}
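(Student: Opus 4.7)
The plan is to first confirm that the $v^N_k$ are eigenvectors by exploiting the translation-invariance of $\Gamma^{N,\eps}$ on the periodic lattice: the entry $\Gamma^{N,\eps}_{\ell,m}$ depends on $\ell,m$ only through the torus distance between $x^N_m + \theta$ and $x^N_\ell$, hence only on $m-\ell$ modulo $n$. A short calculation then yields
\[
 \lambda^{N,\eps}_k = \frac{1}{N\,Z^{N,\eps}} \sum_{p \in J_N} e^{-\dst(x^N_p+\theta,0)^2/\eps}\, e^{2\pi i k \cdot x^N_p}.
\]
I then substitute $z_p := (x^N_p+\theta) \bmod \cub \in \cub$; since $x^N_p = z_p - \theta + m_p$ for some $m_p \in \Z^d$ with $e^{2\pi i k \cdot m_p}=1$, this factors out $\lambda_k = e^{-2\pi i k \cdot \theta}$ and rewrites the eigenvalue as $\lambda^{N,\eps}_k = \lambda_k\, B^N_k/A^N$, where $A^N = \tfrac1N\sum_p e^{-|z_p|^2/\eps}$ and $B^N_k = \tfrac1N\sum_p e^{-|z_p|^2/\eps}\, e^{2\pi i k \cdot z_p}$ are discrete versions of the integrals $A = \int_\cub e^{-|y|^2/\eps}\,\dn y$ and $B_k = \int_\cub e^{-|y|^2/\eps}\, e^{2\pi i k \cdot y}\,\dn y$ that give $\lambda^\eps_k = \lambda_k\, B_k/A$ (after the analogous substitution in the proof of Proposition~\ref{prop:eigenvalue asymptotics}).

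To prove \eqref{eq:ev2} I would estimate $|A^N - A|$ and $|B^N_k - B_k|$ by a midpoint-rule argument. Viewed on the torus, $\{z_p\}$ is a translated unit-cell lattice, so the cubes $C_p$ of side $1/n$ centred at $z_p$ tile the torus with $|C_p| = 1/N$. On every interior cube, $G_k(y) := e^{-|y|^2/\eps}\,e^{2\pi i k \cdot y}$ is smooth, and the second-order Taylor formula around $z_p$ (whose linear term averages to zero by central symmetry of $C_p$) yields a per-cell error bounded by $(d/24)\, n^{-(d+2)}\,\|\partial^2 G_k\|_{L^\infty(C_p)}$. Using the pointwise bound $|\partial^2_{\alpha\beta} G_k(y)| \leq C_d(|y|^2/\eps^2 + |k|^2 + 1/\eps)\,e^{-|y|^2/\eps}$, one obtains $\|\partial^2 G_k\|_{L^1(\cub)} \leq C_d(1+\eps|k|^2)\, \eps^{d/2-1}$, hence $|B^N_k - B_k| \leq C_d (1+\eps|k|^2)\,\eps^{d/2-1}/n^2$ and similarly for $|A^N - A|$. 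Boundary cells straddling $\partial\cub$ are handled by the trivial bound $|G_k| \leq e^{-1/(4\eps)}$ there. Since $A \asymp \eps^{d/2}$ by \eqref{eq:iii}, it follows that $|\lambda^{N,\eps}_k - \lambda^\eps_k| \leq C_d(1+\eps|k|^2)/(n^2\eps)$; combining with Proposition~\ref{prop:eigenvalue asymptotics}, which gives $|\lambda^\eps_k - e^{-\pi^2\eps|k|^2}\lambda_k| \leq 2^{d+1}\,e^{-1/(8\eps)}$, and absorbing the exponential remainder as $n^2\eps\, e^{-1/(8\eps)}/(n^2\eps)$ produces \eqref{eq:ev2}.

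For \eqref{eq:ev3} I would switch to summation by parts to extract decay in $|k|$. Choose $\alpha$ with $|k_\alpha|$ maximal (so $|k_\alpha| \geq |k|/\sqrt d$) and set $\omega_\alpha := e^{2\pi i k_\alpha/n}$. Periodicity of the sum along the $\alpha$-axis kills all boundary terms, and two successive summations by parts give
\[
 \sum_p a_p\, \omega_\alpha^{p_\alpha} = (1-\omega_\alpha)^{-2}\, \sum_p \Delta^2_\alpha a(p)\, \omega_\alpha^{p_\alpha}, \qquad |1-\omega_\alpha|^{-2} \leq \tfrac{n^2}{16\,k_\alpha^2},
\]
for $|k_\alpha|\leq n/2$. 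To control $\sum_p|\Delta^2_\alpha a(p)|$, I split $a_p = \tilde h(z_p) = \tilde G(z_p) + (\tilde h-\tilde G)(z_p)$, where $\tilde h(y) := e^{-\dst(y,0)^2/\eps}$ and $\tilde G(y) := \sum_{m \in \Z^d} e^{-|y-m|^2/\eps}$ is the periodised Gaussian (smooth on the torus and satisfying $\|\tilde h - \tilde G\|_\infty \leq C_d\, e^{-1/(4\eps)}$). The smooth piece contributes $\sum_p |\Delta^2_\alpha \tilde G(z_p)| \leq C_d\, n^{d-2}\,\eps^{d/2-1}$ via a Riemann-sum estimate of $\int_\cub |\partial^2_\alpha \tilde G|\,\dn y$; the rough piece contributes at most $4\sum_p |(\tilde h-\tilde G)(z_p)| \leq C_d\, n^d\, e^{-1/(4\eps)}$ by the triangle inequality. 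Dividing by $N\,Z^{N,\eps} \asymp n^d\, \eps^{d/2}$ yields $|\lambda^{N,\eps}_k| \leq C_d/(\eps|k|^2) + C_d\, n^2\, e^{-1/(4\eps)}/(\eps^{d/2}|k|^2)$, and for $\eps$ small enough the remainder is dominated by $C_d\, n^2\,\eps\, e^{-1/(32\eps)}/(\eps|k|^2)$, producing \eqref{eq:ev3}.

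The main obstacle I expect is the fact that $\tilde h$ is only $C^0$ on the torus, with $C^1$ jumps of order $\eps^{-1}\,e^{-1/(4\eps)}$ across~$\partial\cub$, so a direct smooth Riemann-sum or summation-by-parts estimate is unavailable. The remedy used throughout is to work on the torus with the Voronoi-cube partition around $\{z_p\}$, to pass to the fully smooth periodised Gaussian $\tilde G$ whenever second-derivative estimates are needed, and to relegate the $\tilde h - \tilde G$ discrepancy to exponentially small remainders absorbed into the two bounds. The thresholds $\bar\eps_d$ and $\bar h_d$ are there precisely to guarantee that these boundary and polynomial-vs-exponential absorption steps go through with a universal constant $K_d$.
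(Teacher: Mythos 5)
Your proposal is correct and follows the same overall strategy as the paper: translation invariance gives the explicit eigenvectors and the representation of $\lambda^{N,\eps}_k$ as a normalized Gaussian lattice sum (cf.\ \eqref{eq:lambdarep}); \eqref{eq:ev2} comes from a second-order Taylor/midpoint comparison of lattice sums with integrals plus exponentially small corrections near $\partial\cub$; and \eqref{eq:ev3} comes from a double summation by parts along the coordinate where $|k_\alpha|$ is largest, with $|1-e^{2\pi i k_\alpha/n}|^{-2}\lesssim n^2/k_\alpha^2$. The points where you genuinely diverge are worth noting. For \eqref{eq:ev2}, the paper completes the square first and reduces everything to the single identity \eqref{eq:complexmagic} for complex-shifted Gaussians over $\R^d$, re-deriving the continuous asymptotics along the way; you instead keep the oscillatory factor, compare the discrete quotient $B^N_k/A^N$ to the continuous $B_k/A$, and invoke Proposition~\ref{prop:eigenvalue asymptotics} as a black box --- equivalent content, slightly more modular. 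For \eqref{eq:ev3}, your periodization is the more substantive difference: by summing the genuinely periodic kernel $\tilde h(y)=e^{-\dst(y,0)^2/\eps}$ over a full period, the Abel-summation boundary terms vanish identically, and the lack of smoothness at the seam is quarantined in the exponentially small discrepancy $\tilde h-\tilde G$ with the periodized Gaussian. The paper instead keeps the non-periodic Gaussian on the window $J^1_{N,\theta}$, tracks the boundary terms explicitly (the quantity $B$, bounded by $4e^{-1/(16\eps)}$), and then uses the pointwise inequality $(4\eta^2+2)e^{-\eta^2}\le 16e^{-(\eta-\sigma)^2}$ together with an index shift to dominate the second-difference sum by the original partition sum $N\,Z^{N,\eps}$ itself, thereby avoiding a separate lattice-sum-versus-integral estimate for $\int|\partial^2_\alpha \tilde G|$. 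Your route needs that extra Riemann-sum estimate (which again uses $(n^2\eps)^{-1}\le\bar h_d$, in the spirit of \eqref{eq:seriesest}), but is arguably cleaner conceptually; both close the argument with the same final division by $N\,Z^{N,\eps}\asymp n^d\eps^{d/2}$.
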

Compared to Proposition \ref{prop:eigenvalue asymptotics} we see here the effect of a third length scale $1/n$, associated with the discretization. Proposition \ref{prop:eigenvalue asymptotics2} implies that for small $\eps$, eigenvalues of $T^{N,\eps}$ are similar to those of $T$, as long as the entropic blur is larger than the discretization scale, but smaller than the scale of the eigenfunction, i.e. 
\begin{align}
\label{eq:smallness_assumption}
	\frac{1}{n} \ll \sqrt{\eps} \ll \frac{1}{|k|}.
\end{align}
For the torus we will refine this discussion in Section \ref{sec:TorusSpectrumDiscussion}.
More generally, we expect that this observed relation between the three length scales also provides a good intuition for other dynamical systems $(\cX,F,\mu)$.
\begin{proof}
    To prove that $v^N_k$ for a given $k\in J_N$ is an eigenvector, and to calculate its eigenvalue $\lambda^{N,\eps}_k$, we need to evaluate the product $\Gamma^{N,\eps}v^N_k$ in every component $m\in J_N$:
    \begin{align}
    \label{eq:eigvalcalc}
        \big[\Gamma^{N,\eps}v^N_k\big]_m
        = \frac1{N\,Z^{N,\eps}}\sum_{\ell\in J_N}e^{-\frac1\eps\dst(\ell/n+\theta,m/n)^2}e^{2\pi ik\cdot\ell/n}.
    \end{align}
    By $n$-periodicity of both exponents with respect to each component of $\ell$, one can shift the domain of summation in $\Z^d$. In particular, we may replace the summation over $\ell\in J_N$ by 
    \begin{align*}
        \ell\in J_{N,\theta}+m\quad\text{where}\quad 
        J_{N,\theta} := \left\{ j\in\Z^d \,\middle|\,\frac{j}n+\theta\in\cub\right\}.
    \end{align*}
    Notice that $J_{N,\theta}$ and thus also $J_{N,\theta}+m$ are indeed translates of $J_N$. By definition of $\dst$, we can rewrite \eqref{eq:eigvalcalc} as
    \begin{align*}
        \big[\Gamma^{N,\eps}v^N_k\big]_m
        &= \frac1{N\,Z^{N,\eps}}\sum_{\ell\in J_{N,\theta}+m} e^{-\frac1\eps[(\ell-m)/n+\theta]^2}e^{2\pi ik\cdot\ell/n} \\
        &= \frac1{N\,Z^{N,\eps}}\left(\sum_{j\in J_{N,\theta}}e^{-\frac1\eps[j/n+\theta]^2}e^{2\pi ik\cdot j/n}\right)[v^N_k]_m,
    \end{align*}
    which proves the eigenvector property of $v^N_k$ with corresponding eigenvalue
    \begin{align}
        \label{eq:lambdarep}
        \lambda^{N,\eps}_k = \frac1{N\,Z^{N,\eps}}\sum_{j\in J_{N,\theta}}e^{-\frac1\eps[j/n+\theta]^2}e^{2\pi ik\cdot j/n}.
    \end{align}
    We start by proving \eqref{eq:ev2}. Directly from \eqref{eq:lambdarep}, recalling that $\lambda_k=e^{-2\pi i k\cdot\theta}$ by \eqref{eq:origlambda}, we obtain
    \begin{align*}
        \lambda^{N,\eps}_k 
        &= \frac{\lambda_k}{N\,Z^{N,\eps}} \sum_{j\in J_{N,\theta}} \exp\left(-\frac1\eps\left[\frac jn+\theta\right]^2+2\pi i k\cdot\left(\frac jn+\theta\right)\right) \\
        &= \frac{\lambda_k}{N\,Z^{N,\eps}} \sum_{j\in J_{N,\theta}} \exp\left(-\left[\frac1{\eps^{1/2}}\left(\frac jn+\theta\right)-\eps^{1/2}\pi ik\right]^2\right) e^{-\eps\pi^2|k|^2}.
    \end{align*}
    For brevity, let 
    \begin{align}
        \label{eq:xigrid}
        \xi_j = \frac{j+n\theta}{n\eps^{1/2}}
    \end{align}
    in the following; then
    \begin{align}
        \label{eq:tech000}
        \lambda^{N,\eps}_k = \frac{\lambda_k e^{-\eps\pi^2|k|^2}}{N\,Z_{N,\eps}}\sum_{j\in J_{N,\theta}}e^{-[\xi_j-\eps^{1/2}\pi k i]^2},
        \quad
        Z^{N,\eps} = \frac1N\sum_{j\in J_{N,\theta}}e^{-[\xi_j]^2}.
    \end{align}
    To further simplify these expressions, we estimate the errors induced by first replacing the sum over $j\in J_{N,\theta}$ above by a sum over all $j\in\Z^d$, and then by replacing that sum by an integral.
    
    For the passage from $J_{N,\theta}$ to $\Z^d$, we combine two elementary inequalities. First, analogous to \eqref{eq:ZetaTau} for $j\in\Z^d\setminus J_{N,\theta}$, and for any $\tau\in\R^d$,
    \begin{align}
        \label{eq:111}
        |e^{-[\xi_j-i\tau]^2}| = e^{|\tau|^2-|\xi_j|^2}
        \le e^{|\tau|^2}e^{-\frac1{8\eps}}e^{-\frac12|\xi_j|^2},
    \end{align}
    since $|\xi_j|^2\ge\frac1{4\eps}$.
    And second, for any $a,b\in\R$ with $0<b\le1$,
    \begin{align}
        \label{eq:seriesest}
        b\sum_{j\in\Z}e^{-\frac12(bj-a)^2} \le \Theta,
    \end{align}
    with a uniform constant $\Theta$; for an easy proof of \eqref{eq:seriesest}, observe that
    \begin{align*}
        (bx-a)^2\le 2(bj-a)^2+\frac12b^2 \quad\text{for all $x\in[j-1/2,j+1/2]$},
    \end{align*}
    and so
    \begin{align*}
        b\sum_{j\in\Z}e^{-\frac12(bj-a)^2} 
        \le b\int_{\R} e^{-\frac14(bx-a)^2+\frac18b^2} \dd x
        = e^{\frac18b^2}\int_{\R}e^{-\frac14 y^2}\dd x
        \le 2\sqrt\pi\,e^{\frac18}. 
    \end{align*}
    Now, Combining \eqref{eq:111} and \eqref{eq:seriesest} with $b:=(n^2\eps)^{-1/2}$ --- note that $b\le1$ for $n^2\eps\ge 1$ --- we obtain that
    \begin{align}
        \nonumber
        \frac1{N}\left|\sum_{j\in\Z^d} e^{-[\xi_j-i\tau]^2} - \sum_{j\in J_{N,\theta}} e^{-[\xi_j-i\tau]^2}\right|
        &\le \eps^{d/2}e^{|\tau|^2}e^{-\frac1{8\eps}}\prod_{\alpha=1}^d\left(\frac1{n\eps^{1/2}}\sum_{j_\alpha\in\Z}e^{-\frac12(\frac{j_\alpha}{n\eps^{1/2}}+\theta_\alpha)^2}\right) \\
        \label{eq:tech001}
        &\le \eps^{d/2}\Theta^de^{|\tau|^2}e^{-\frac1{8\eps}}.
    \end{align}
    For the passage from the sum to the integral, we use that \eqref{eq:xigrid} with $j\in\Z^d$ arbitrary defines a $d$-dimensional uniform lattice in $\R^d$ of mesh size $(n^2\eps)^{-1/2}$. We associate each $j\in\Z^d$ to the half-open cube
    \begin{align*}
        B_j:=\xi_j+(n^2\eps)^{-1/2}\cub.
    \end{align*}
    The $B_j$ form a tesselation of $\R^d$.         
    
    Consider a $C^2$-function $h:\R^d\to\C$, i.e., both real and imaginary part of $h$ are $C^2$. A second order Taylor expansion yields for arbitrary $\zeta\in B_j$ that
    \begin{align*}
        \big|h(\zeta)-\big[h(\xi_j)+\nabla h(\xi_j)\cdot(\zeta-\xi_j)\big]\big| \le \frac12\sup_{z\in B_j}\opn{\nabla^2h(z)}\,|\zeta-\xi_j|^2,
    \end{align*}
    where $\nabla h(\xi_j)\in\C^d$ is $h$'s complex valued gradient at $\xi_j$, and $\opn{\nabla^2h(z)}$ is the Euclidean operator norm of the complex valued Hessian at $z$, i.e.,
    \begin{align*}
        \opn{\nabla^2h(z)} = \sup\left\{ \big|v\cdot \nabla^2h(z)v\big|\,\middle|\,v\in\R^d,\,|v|\le1\right\}. 
    \end{align*}
    Therefore,
    \begin{align*}
        \left| \fint_{B_j} h(\zeta)\dd\zeta - h(\xi_j) \right| 
        &\le \left|\nabla h(\xi_j)\cdot \fint_{B_j}(\zeta-\xi_j)\dd\zeta \right|
        + \frac12\sup_{z\in B_j}\opn{\nabla^2h(z)}\fint_{B_j}|\zeta-\xi_j|^2\dd\zeta \\
        &= \frac d2\fint_{-\frac12(n^2\eps)^{-1/2}}^{\frac12(n^2\eps)^{-1/2}}w^2\dd w\ \sup_{z\in B_j}\opn{\nabla^2h(z)} \\
        &= \frac{d}{24n^2\eps}\sup_{z\in B_j}\opn{\nabla^2 h(z)}.
    \end{align*}
    Specifically for $h(z)=e^{-[z-i\tau]^2}$, we have
    \begin{align*}
        \nabla^2h(z) = h(z)\big((4zz^T-4\tau\tau^T-2\id)-4i(\tau z^T+z\tau^T)\big),
    \end{align*}
    and therefore, 
    \begin{align*}
        \opn{\nabla^2h(z)} &\le \big(\opn{4zz^T-4\tau\tau^T-2\id}+ 4\opn{\tau z^T+z\tau^T}\big)|h(z)| \\
        &\le \big(2+4|z|^2+4|\tau|^2+8|\tau||z|\big) \big|e^{-\sum_k(z_k^2-\tau_k^2-2iz_k\tau_k)}\big| \\
        &\le \big(2+8|z|^2+8|\tau|^2\big) e^{|\tau|^2-|z|^2} \\
        &\le 8(1+|\tau|^2)e^{|\tau|^2}(1+|z|^2)e^{-\frac12|z|^2} e^{-\frac12|z|^2} \\
        &\le 16(1+|\tau|^2)e^{|\tau|^2}e^{-\frac12|z|^2},
    \end{align*}
    where we have used the elementary inequality $(1+|z|^2)e^{-|z|/2}\le2$ to pass from the fourth line to the last. 
    Hence the difference between the integral $\int_{\R^d}h(\xi)\dn\xi$ and the approximation by the sum $(n^2\eps)^{-d/2}\sum_{j\in\Z^d}h(\xi_j)$ can be estimated as follows:
    \begin{align}
        \nonumber
        \left|\int_{\R^d}e^{-[\xi-i\tau]^2}\dd\zeta - (n^2\eps)^{-d/2}\sum_{j\in\Z^d}e^{-[\xi_j-i\tau]^2}\right|
        &\le \frac{16d(1+|\tau|^2)e^{|\tau|^2}}{24n^2\eps}\,(n^2\eps)^{-d/2}\sum_{j\in\Z^d}e^{-\frac12|\xi_j|^2} \\
        \label{eq:tech002}
        &\le\frac{2d\Theta^d}3 \cdot\frac{(1+|\tau|^2)e^{|\tau|^2}}{n^2\eps}.
    \end{align}
    Now recall that by \eqref{eq:complexmagic}, the value of the integral above is $\pi^{d/2}$, independently of $\tau$. We combine the estimates \eqref{eq:tech001} and \eqref{eq:tech002} and apply them to the representations of $\lambda^{N,\eps}_k$ and of $Z^{N,\eps}$ in \eqref{eq:tech000}, with $\tau=0$ and $\tau=\sqrt{\eps}\pi k$, respectively. This yields:
    \begin{align}
        \label{eq:techhelp002}
        \left|\pi^{d/2}-\eps^{-d/2}Z^{N,\eps}\right| &\le \Theta^d\left(e^{-\frac1{8\eps}}+\frac{2d}{3n^2\eps}\right), \\
        \label{eq:techhelp003}
        \left|\pi^{d/2}\lambda_k- e^{\eps\pi^2|k|^2}\eps^{-d/2}Z^{N,\eps}\lambda^{N,\eps}_k\right| &\le \Theta^d e^{\eps\pi^2|k|^2}\left(e^{-\frac1{8\eps}}+\frac{2d(1+\eps\pi^2|k|^2)}{3n^2\eps}\right).
    \end{align}
    Finally, we assume that $\eps>0$ is sufficiently small and $n\eps^{-1/2}$ is sufficiently large so that
    \begin{align*}
        \Theta^d\left(e^{-\frac1{8\eps}}+\frac{2d}{3n^2\eps}\right) \le \frac{\pi^{d/2}}2
    \end{align*}
    and consequently
    \begin{align}
        \label{eq:techhelp004}
        \eps^{-d/2}Z^{N,\eps} \ge \frac{\pi^{d/2}}2.
    \end{align}
    The claim \eqref{eq:ev2} is now proven by combining \eqref{eq:techhelp002}, \eqref{eq:techhelp003} and \eqref{eq:techhelp004}:
    \begin{align*}
        \big|\lambda^{N,\eps}_k - e^{-\pi^2\eps|k|^2}\lambda_k\big|
        &=\frac{\eps^{d/2}e^{-\pi^2\eps|k|^2}}{Z^{N,\eps}}
        \big(\big|e^{\pi^2\eps|k|^2}\eps^{-d/2}Z^{N,\eps}\lambda^{N,\eps}_k-\eps^{-d/2}Z^{N,\eps}\lambda_k\big|\big) \\
        &\le\frac{\eps^{d/2}e^{-\pi^2\eps|k|^2}}{Z_{N,\eps}}
        \big(\big|e^{\pi^2\eps|k|^2}\eps^{-d/2}Z_{N,\eps}\lambda^{N,\eps}_k-\lambda_k\pi^{d/2}\big| + \big|\pi^{d/2}-\eps^{-d/2}Z^{N,\eps}\big|\big) \\
        &\le\frac{2(1+e^{-\pi^2\eps|k|^2})\Theta^d}{\pi^{d/2}}\left(e^{-\frac1{8\eps}}+\frac{2d(1+\eps\pi^2|k|^2)}{3n^2\eps}\right).
    \end{align*}
    We turn to the proof of \eqref{eq:ev3}. Assume $k\in J_N$ is not zero. Without loss of generality, let $k_1$ be a component with largest modulus, i.e., $|k|^2\le d k_1^2$. We write $k=(k_1,k^*)$ and decompose $j\in J_{N,\theta}$ accordingly as $j=(j_1,j^*)\in J_{N,\theta}^1\times J_{N,\theta}^*$,
    as well as $\theta=(\theta_1,\theta^*)$. We then have, using that $[j/n+\theta]^2=(j_1/n+\theta_1)^2+[j^*/n+\theta^*]$ and $k\cdot j=k_1j_1+k^*\cdot j^*$,
    \begin{equation}
        \label{eq:tech004}
        \begin{split}
        \big|N\,Z^{N,\eps}\lambda^{N,\eps}_k\big|
        & = \left|\sum_{(j_1,j^*)\in J_{N,\theta}} e^{-\frac1\eps[j^*/n+\theta^*]^2} e^{2\pi i k^*\cdot j^*/n}e^{-\frac1\eps(j_1/n+\theta_1)^2}e^{2\pi i k_1j_1/n}\right| \\
        & \le \sum_{j^*\in J_{N,\theta}^*}e^{-\frac1\eps[j^*/n+\theta^*]^2}I,
        \end{split}
    \end{equation}
    where
    \begin{align*}
        \ I := \left|\sum_{j_1\in J_{N,\theta}^1}e^{-\frac1\eps(j_1/n+\theta_1)^2}e^{2\pi i k_1j_1/n}\right|.
    \end{align*}
    To estimate $I$ further, we perform a summation by parts: for any complex numbers $a_\ell,b_\ell$, one has that
    \begin{align}
    \label{eq:sumbyparts}
    \begin{split}
        \sum_{\ell=\ell_0}^{\ell_1} b_\ell(a_{\ell+1}-2a_\ell+a_{\ell-1})
        &= \sum_{\ell=\ell_0}^{\ell_1}(b_{\ell+1}-2b_\ell+b_{\ell-1})a_\ell \\
        &\quad + a_{\ell_1+1}b_{\ell_1}-a_{\ell_1}b_{\ell_1+1} + a_{\ell_0-1}b_{\ell_0}-a_{\ell_0}b_{\ell_0-1}.
    \end{split}
    \end{align}
    We substitute
    \begin{align*}
        e^{2\pi i k_1j_1/n}
        = -\frac{e^{2\pi i k_1(j_1+1)/n}-2e^{2\pi i k_1 j_1/n}+e^{2\pi i k_1(j_1-1)/n}}{2(1-\cos(2\pi k_1/n))},
    \end{align*}
    and use \eqref{eq:sumbyparts}:
    \begin{equation}
        \label{eq:tech005}
    \begin{split}
        &2(1-\cos(2\pi k_1/n))I\\
        &= \left|\sum_{j_1\in J_{N,\theta}^1}e^{-\frac1\eps(j_1/n+\theta_1)^2}\big(e^{2\pi i k_1(j_1+1)/n}-2e^{2\pi i k_1 j_1/n}+e^{2\pi i k_1(j_1-1)/n}\big)\right|\\
        &= \left|\sum_{j_1\in J_{N,\theta}^1}\big(e^{-\frac1\eps((j_1+1)/n+\theta_1)^2}-2e^{-\frac1\eps(j_1/n+\theta_1)^2}+e^{-\frac1\eps((j_1-1)/n+\theta_1)^2}\big)e^{2\pi i k_1 j_1/n}+B\right| \\
        &= \left|\frac1{n^2}\sum_{j_1\in J}\left(4\left(\frac{\nu_{j_1}+\theta_1}\eps\right)^2-\frac2{\eps}\right)e^{-\frac1\eps(\nu_{j_1}+\theta_1)^2} + B\right|\\
        &\le \frac1{n^2\eps}\sum_{j_1\in J_{N,\theta}^1}\left(4\left(\frac{\nu_{j_1}+\theta_1}{\sqrt{\eps}}\right)^2+2\right)\exp\left(-\left(\frac{\nu_{j_1}+\theta_1}{\sqrt{\eps}}\right)^2\right)+ |B|.
    \end{split}
    \end{equation}
    Here $\nu_{j_1}$ is a suitable intermediate value between $(j_1-1)/n$ and $(j_1+1)/n$, provided by Taylor's theorem,
    and $B$ summarizes the ``boundary terms''. According to \eqref{eq:sumbyparts}, $B$ is the sum of four products of the form 
    \[ \pm e^{-\frac1\eps(j_1^+/n+\theta_1)^2}e^{2\pi ik_1 j_1^-/n}, \]
    where $|j_1^+-j_1^-|=1$,
    and $|j_1^+/n+\theta_1|\ge1/2-1/n\ge1/4$. Thus,
    \[ |B|\le 4e^{-1/(16\eps)}. \]
    Next, decompose $J_{N,\theta}^1=J_+\cup J_-$, where $J_+$ and $J_-$ contain the indices $j_1$ with $j_1/n+\theta>0$ and with $j_1/n+\theta\le0$, respectively. That is, $J_+$ and $J_-$ are the upper and lower halves, respectively, of $J_{N,\theta}^1$. Fix $j_1\in J_+$ in the following. Assuming that $n\sqrt\eps>4$, and recalling that $|\nu_{j_1}-j_1/n|\le1/n$, it follows that
    \begin{align*}
        \left|\frac{\nu_{j_1}-j_1/n}{\sqrt\eps}\right| < \frac14 \quad \text{and so} \quad
        \eta_{j_1}:=\frac{\nu_{j_1}+\theta_1}{\sqrt\eps} > -\frac14.
    \end{align*}
    Let $n^*\in\Z$ be an index such that
    \begin{align}
        \label{eq:nrange}
        \frac34 n\sqrt{\eps} < n^* < n\sqrt\eps,
    \end{align}
    and define $\sigma_{j_1}:=(\nu_{j_1}-j_1/n+n^*/n)/\sqrt{\eps}\in[1/2,5/4]$. An elementary inequality, obtainable e.g. from $e^\eta\ge(1+\eta/2)^2$, provides on grounds of $\eta_{j_1}>-1/4$ and $1/2<\sigma_{j_1}<5/4$ that
    \begin{align*}
        (4\eta_{j_1}^2+2)e^{-\eta_{j_1}^2}\le 16e^{-(\eta_{j_1}-\sigma_{j_1})^2}.
    \end{align*}
    Substituting the expressions for $\eta_{j_1}$ and $\sigma_{j_1}$ yields
    \begin{align*}
        \left(4\left(\frac{\nu_{j_1}+\theta_1}{\sqrt{\eps}}\right)^2+2\right)\exp\left(-\left(\frac{\nu_{j_1}+\theta_1}{\sqrt{\eps}}\right)^2\right)
        \le 16 e^{-\frac1\eps\big((j_1-n^*)/n+\theta_1\big)^2}.
    \end{align*}
    Now since $n^*<n/2$ for $\eps<1/4$ by \eqref{eq:nrange}, we have that $j_1-n^*\in J_{N,\theta}^1$, and so, by a simple index shift,
    \begin{align*}
        \sum_{j_1\in J_+}\left(4\left(\frac{\nu_{j_1}+\theta_1}{\sqrt{\eps}}\right)^2+2\right)\exp\left(-\left(\frac{\nu_{j_1}+\theta_1}{\sqrt{\eps}}\right)^2\right)
        \le 16\sum_{j_1\in J_{N,\theta}^1}e^{-\frac1\eps(j_1/n+\theta_1)^2}.
    \end{align*}
    The reasoning for $J_-$ in place in $J_+$ in analogous. Recalling \eqref{eq:tech005}, we conclude that
    \begin{align*}
        I \le \frac 1{2n^2\eps(1-\cos(2\pi k_1/n))}\left[32\sum_{j_1\in J_{N,\theta}^1} e^{-\frac1\eps(j_1/n+\theta_1)^2} + 4n^2\eps e^{-1/(16\eps)}\right].
    \end{align*}
    The denominator is now estimated using that $\cos t\le 1-\frac18t^2$ for all $|t|\le\pi$. Further, the expression $e^{-1/(32\eps)}$ is clearly less or equal than the largest term in the sum if $n>4$. This leads to the following estimate:
    \begin{align*}
        I \le \frac4{\eps|k|^2}\big(32+4n^2\eps e^{-1/(32\eps)}\big) \sum_{j_1\in J_{N,\theta}^1} e^{-\frac1\eps(j_1/n+\theta_1)^2} .
    \end{align*}
    Eventually, substitution into \eqref{eq:tech004} provides
    \begin{align*}
        \big|N\,Z^{N,\eps}\lambda^{N,\eps}_k\big|
        &\le \frac{128+16n^2\eps e^{-1/(32\eps)}}{\eps|k_1|^2}\sum_{j^*\in J_{N,\theta}^*}e^{-\frac1\eps[j^*/n+\theta']^2}\sum_{j_1\in J_{N,\theta}^1}e^{-\frac1\eps(j_1/n+\theta_1)^2} \\
        &\le \frac{128 d N\,Z_N}{\eps|k|^2}\big(1+n^2\eps e^{-1/(32\eps)}\big),
    \end{align*}
    which in turn shows \eqref{eq:ev3}.
\end{proof}

\subsection{Consequence: accumulation of eigenvalues}
\label{sec:TorusSpectrumDiscussion}
In this section, we draw several conclusions from Propositions \ref{prop:eigenvalue asymptotics} and \ref{prop:eigenvalue asymptotics2} on the relation between the shift $\theta$, the length scales $\eps^{1/2}$ and $n^{-1}$, and the accumulation of eigenvalues $\lambda^{N,\eps}_k$ on or near the unit circle. The goal is to give a theoretically founded explanation for some of the results from numerical experiments that are presented in Section \ref{sec:ExamplesShift} further below and to provide some intuition for more general systems.

We start by discussing the smallness assumption $(n^2\eps)^{-1}\le\bar h_d$ of Proposition \ref{prop:eigenvalue asymptotics2}, i.e. $n^{-1}\ll \sqrt\eps$ on the left-hand side of~\eqref{eq:smallness_assumption}. This is far more than a technical hypothesis: we claim that, in general, the discretized dynamics does need a blur that stretches over several cells of the discretization in order to produce meaningful results. 

Let us consider the case $n^{-1} \geq \sqrt\eps$, i.e.~$n^2\eps \leq 1$.  Writing in the representation \eqref{eq:lambdarep} of $\lambda^{N,\eps}_k$ the first exponent as $[j+n\theta]^2/(n^2\eps)$, the entire sum reduces essentially to only one term, namely the one for the index $j=\bar m\in J_{N,\theta}$ closest to $-n\theta$. Consequently, $\lambda^{N,\eps}_k\approx e^{2\pi i k\cdot\bar m/n}$ for all $k$. That is, a large portion of the $N$ eigenvalues lies very close to the unit circle, in groups with uniform angular distance. The spectrum thus contains little information about the dynamics, and might change significantly with the discretization.

For illustration, consider the one-dimensional case $d=1$ with $\theta=1/11$: the spectrum of the pure transfer operator $T$ consists of eleven equally spaced groups of eigenvalues on the unit circle. For discretizations with $n=10$, $n=100$, $n=1000$ etc. points, the limiting spectrum always consists of $n$ groups of points (since the integer $\bar m$ nearest to $-n/11$ never has two or five as prime factor). Hence the coarsest discretization with $n=10$ produces the qualitatively best approximation and the additional structures that appear in the finer discretizations are only misleading.

Consider the case $n^{-1}\ll\sqrt\eps$ next. Under this hypothesis, the
estimate \eqref{eq:ev2} implies a good agreement between the genuine eigenvalues $\lambda^{N,\eps}_k$ and their $N$-independent approximations
\begin{align*}
    \Lambda^\eps_k:=e^{-\eps\pi^2|k|^2}e^{-2\pi i k\cdot\theta},
\end{align*}
at least for sufficiently small indices $k$, that is $|k|^{-1}\ll\sqrt\eps$. Actually, for larger $|k|$, the respective eigenvalues $\lambda^{N,\eps}_k$ lie close to the origin due to \eqref{eq:ev3}, and provide no significant information about the dynamics. Since we are interested in the spectrum close to the unit circle, it is sufficient to study the $\Lambda^\eps_k$ with small $k$ in the sense above.

For ease of presentation, let $d=1$, so that $\theta\in(-1/2,1/2)$ is a real number. Assume that $p,q\in\Z$ are coprime integers with $q>0$ such that
\begin{align}
    \label{eq:nmbr}
    \delta:=\theta-\frac pq\quad\text{satisfies}\quad c:=q^2|\delta|\in(0,1).
\end{align}
Notice that by Dirichlet's approximation theorem, there are infinitely many such pairs $(p,q)$ for each irrational $\theta$. The smaller the value of $c$, the more pronounced is the accumulation phenomenon described below.

Define the index set $L_q:=\{k\in\Z:-q<2k\le q\}$. The $q$ points $\Lambda^\eps_k$ for $k\in L_q$ have almost uniform angular distance of $2\pi/q$. Actually,
\begin{align*}
    \frac{\Lambda^\eps_k}{|\Lambda^\eps_k|} 
    = e^{-2\pi i k(p/q+\delta)}
    = e^{-2\pi i kp/q}e^{i\phi},
\end{align*}
with $|\phi|=|2\pi k\delta|\le \pi q\delta < \pi/q$ by \eqref{eq:nmbr}, so that the variation of the individual angular positions is less than half of the expected angular distance $2\pi/q$. An analogous argument applies also to the shifted index sets $L_q+mq$ with an arbitrary $m\in\Z$; the corresponding $\Lambda^\eps_k$ form a group of $q$ points with approximately uniform angular distance of $2\pi/q$. Notice that the smaller the value of $c\in(0,1)$, the more uniform the angles between the elements of $\{\Lambda^\eps_k : k\in L_q+mq\}$, and the better the alignment of two different such groups with similar $m$.

Consider the question of whether these groups of approximate eigenvalues are distinguishable in the spectrum of $T^{N,\eps}$. If $q\gg\eps^{-1/2}$, then estimate \eqref{eq:ev2} fails, and the positions of the approximations $\Lambda^\eps_k$ gives little information on the positions of the genuine eigenvalues $\lambda^{N,\eps}_k$. If, on the other hand, $q\ll\eps^{-1/2}$, then the modulus $|\Lambda^\eps_k|=e^{-\pi \eps k^2}$ is approximately one for $k$'s from several different groups $L_q+mq$. Thus, there will be significantly more than $q$ points close to the unit circle, and the groups will become more difficult to distinguish.

The most interesting regime is thus if $q$ is of the order of $\eps^{-1/2}$. Then the moduli of the $q$ points $\Lambda^\eps_k$ in one group $k\in L_q+mq$ are still comparable, but the difference between corresponding points in two consecutive groups with $k\in L_q+mq$ and with $k\in L_q+(m+1)q$, respectively, are notable --- there is at least a factor of $e^{-\pi \eps q^2}$. In particular, there is one distinguished group of eigenvalues very close to the unit circle, namely for $k\in L_q$, and that group is accompanied by pairs of groups of $\Lambda^{N,\eps}_k$ with $k\in L_q\pm mq$ for $m=1,2,3,\ldots$ that are moving successively closer to the origin as $m$ increases. 

For a specific example, let $n=1000$, $\theta=1/\pi$. The best three rational approximations $p/q$ of $\theta$ --- in the sense of smallest $c$ in \eqref{eq:nmbr} --- with $q<n$ are given by
\begin{equation*}
    (p,q) \in \{(1,3),(7,22),(113,355)\},
\end{equation*}
with corresponding values for $c$:
\[ c_{(1,3)} = 0.135\ldots,\ c_{(7,22)} = 0.062\ldots,\ c_{(113,355)} = 0.003\ldots \]
The constant $c$ attains exceptionally small values in this example; this is due to the number theoretic properties of $\pi$.

The corresponding approximate 355--, 22--, and 3--cycles should be visible approximately up to $\eps \lesssim (\pi \cdot 355)^{-2} \approx 8 \cdot 10^{-7}$, $\eps \lesssim 2 \cdot 10^{-4}$ and $\eps \lesssim 10^{-2}$ respectively. For $\eps \lesssim 1/n^2 = 10^{-6}$ we expect visible discretization artifacts in the spectrum, i.e.~the 355--cycle will already be hidden by discretization. The other two should appear as $\eps$ increases towards 1. This intuition will be confirmed in the numerical examples, see Figure \ref{fig:circle_map_irrational}.

\clearpage

\section{Numerical experiments}
\label{sec:experiments}
We perform three numerical experiments in order to demonstrate that our method
\begin{enumerate}
    \item indeed captures the spectrum of the regularized transfer operator correctly in the case of the shift map on the torus (section \ref{sec:ExamplesShift});
    \item correctly and robustly captures relevant macroscopic dynamical features in a well known low-dimensional example (section \ref{sec:ExamplesLorenz});
    \item is readily applicable to problems on high-dimensioanl state spaces (section \ref{sec:ExamplesAlanineDipeptide}).
\end{enumerate}
The following numerical experiments have been carried out in the Python programming language, partially using the KeOps package \cite{KeOps} which allows for an efficient and stable implementation of the Sinkhorn algorithm. The corresponding scripts are available at \href{https://github.com/gaioguy/EntropicTransferOperators}{\texttt{https://github.com/gaioguy/EntropicTransferOperators}}.

\label{sec:Examples}
\subsection{The shift map on the circle}
\label{sec:ExamplesShift}
As a first numerical test we consider the shift map from Section \ref{sec:shift map} on the circle $\cX=S^1=\R/\Z$.  We examine the case of rational $\theta=\frac13$ first, i.e.\ such that the circle decomposes into three subsets which are cyclically permuted by $F$. Correspondingly, we expect the spectrum of $\Gamma^{N,\eps}$ to exhibit the third roots of unity.  Indeed, the spectra shown in Figure~\ref{fig:circle_map_rational} clearly show these, except in the case where a regular lattic discretization is used and $\eps=10^{-6}$. In fact, in this case $\Gamma^{N,\eps}$ turns out to be a permutation matrix. This is in agreement with the discussion of Section \ref{sec:TorusSpectrumDiscussion} as $n=1000$ and $3$ are coprime.

\begin{figure}[ht]
    \centering
    \includegraphics[trim=28mm 7mm 35mm 12mm, clip, width=0.21\textwidth]{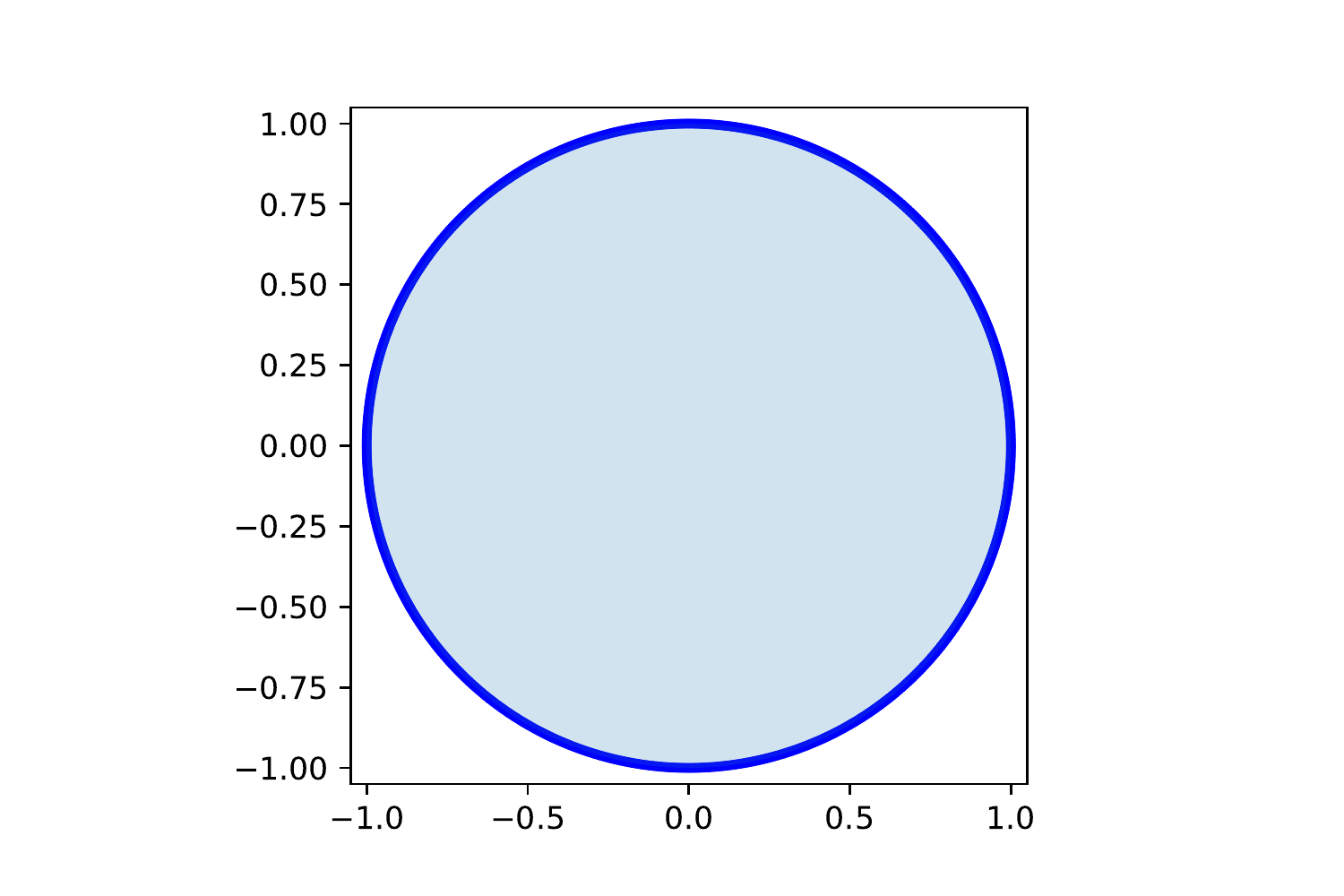}
    \includegraphics[trim=39mm 7mm 35mm 12mm, clip, width=0.185\textwidth]{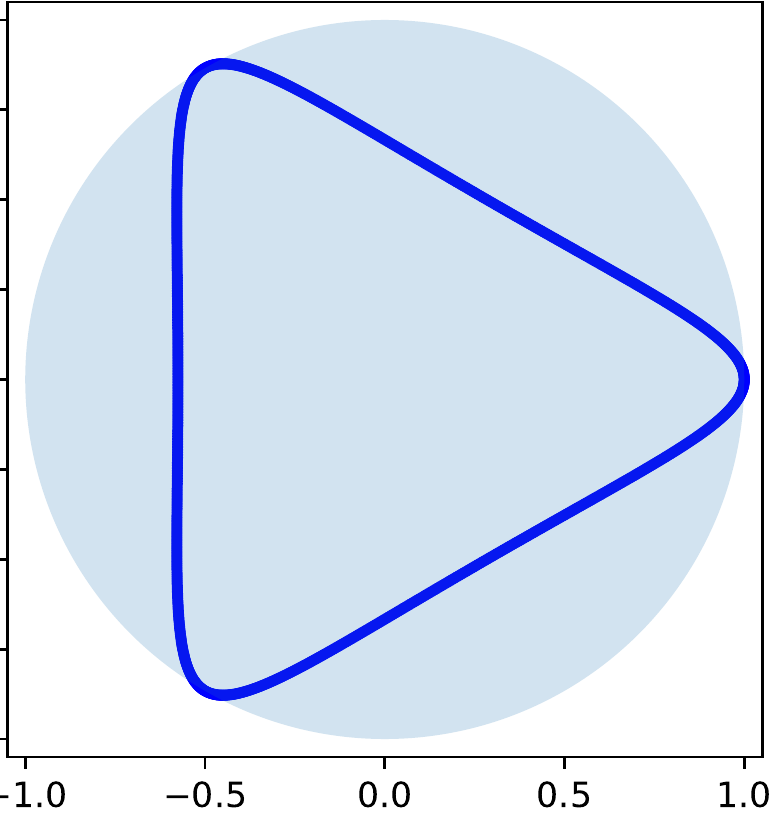}
    \includegraphics[trim=39mm 7mm 35mm 12mm, clip, width=0.185\textwidth]{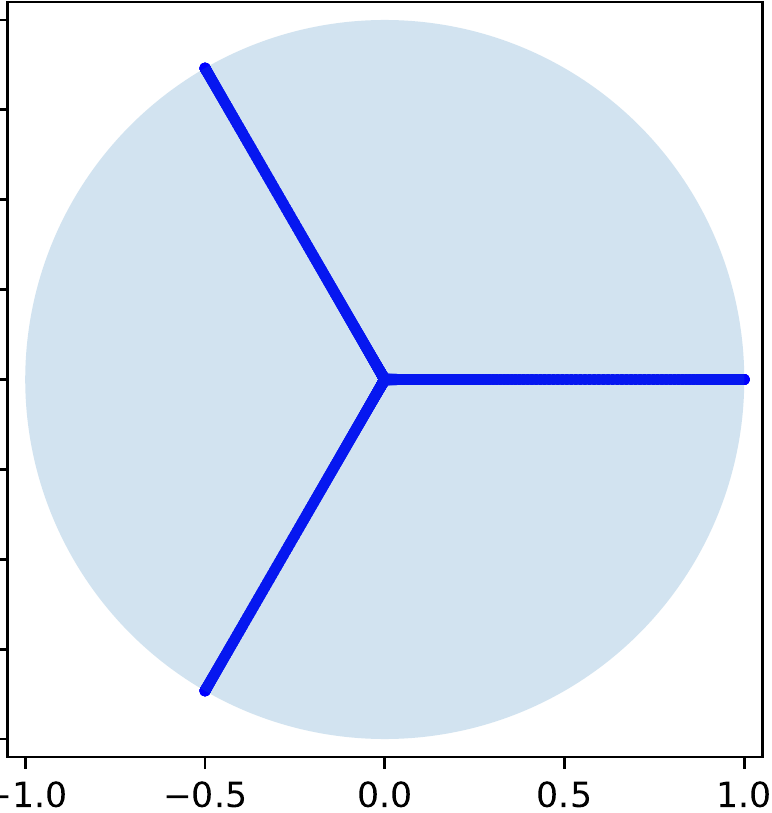}
    \includegraphics[trim=39mm 7mm 35mm 12mm, clip, width=0.185\textwidth]{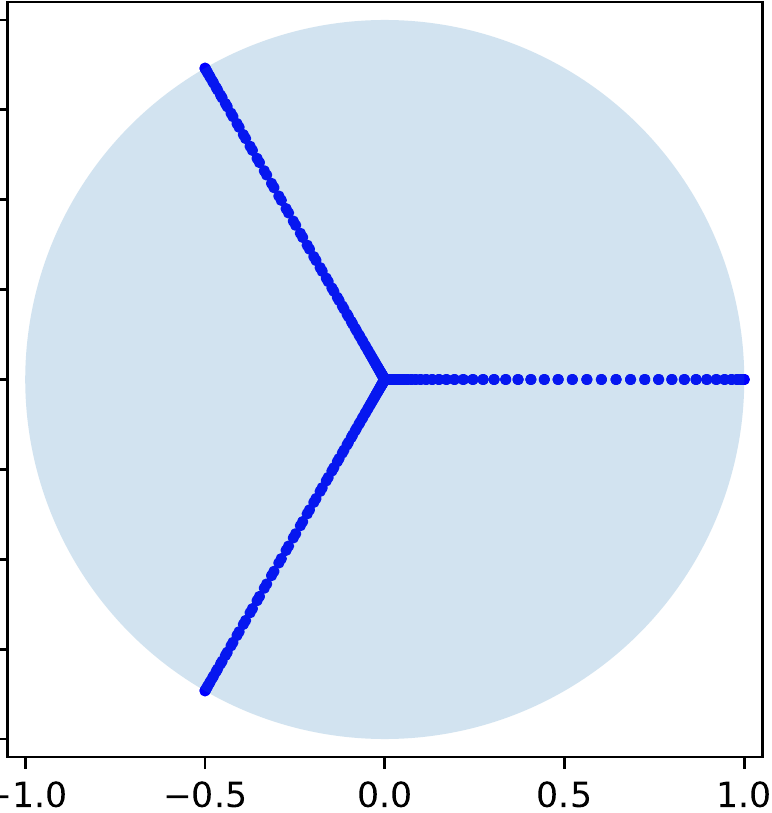}
    \includegraphics[trim=39mm 7mm 35mm 12mm, clip, width=0.185\textwidth]{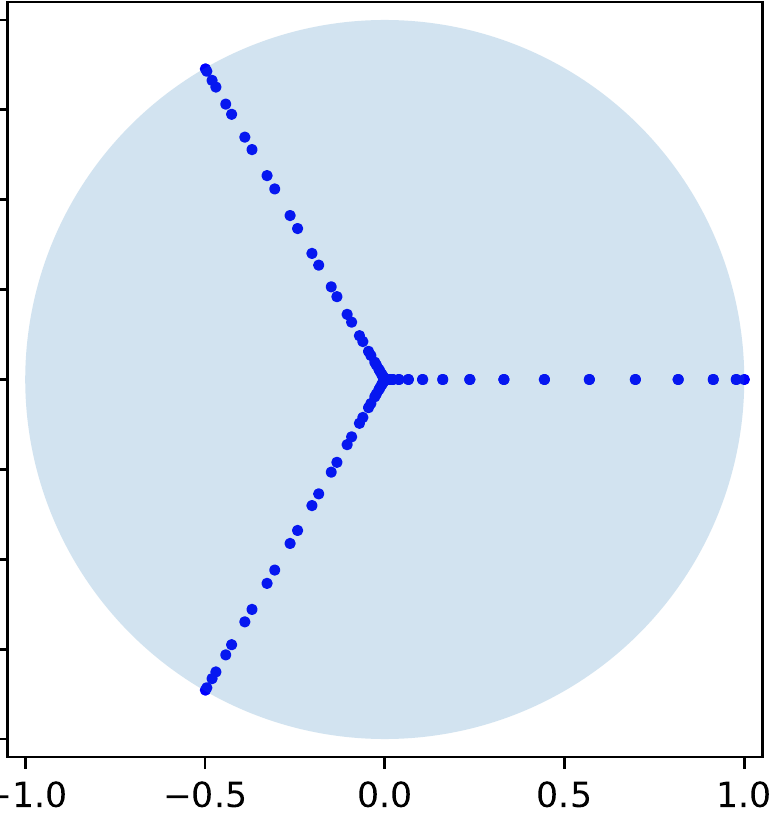}
    \\[5mm]  
    \includegraphics[trim=14mm 3mm 17mm 3mm, clip, width=0.21\textwidth]{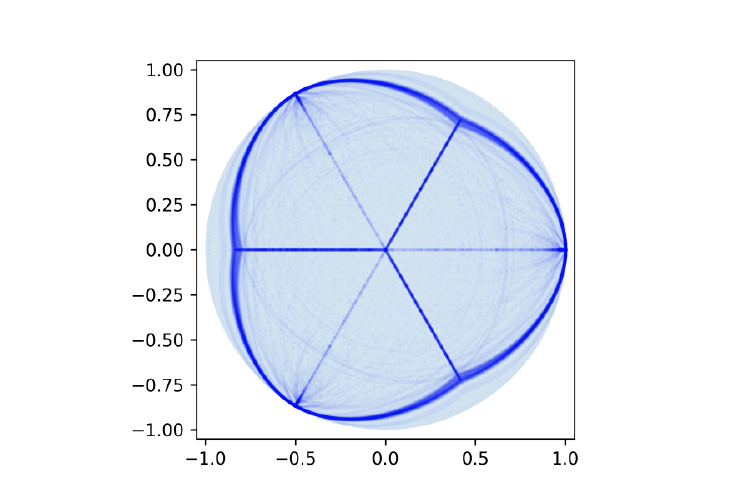}
    \includegraphics[trim=19.5mm 3mm 17mm 3mm, clip, width=0.185\textwidth]{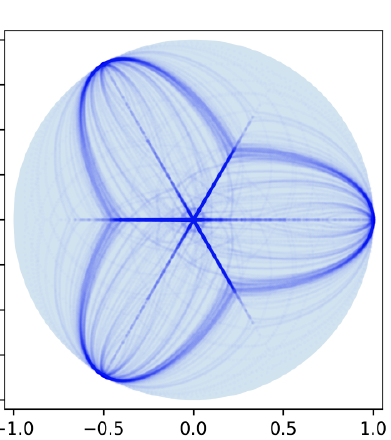}
    \includegraphics[trim=19.5mm 3mm 17mm 3mm, clip, width=0.185\textwidth]{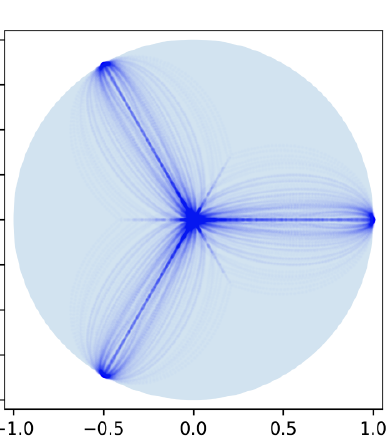}
    \includegraphics[trim=19.5mm 3mm 17mm 3mm, clip, width=0.185\textwidth]{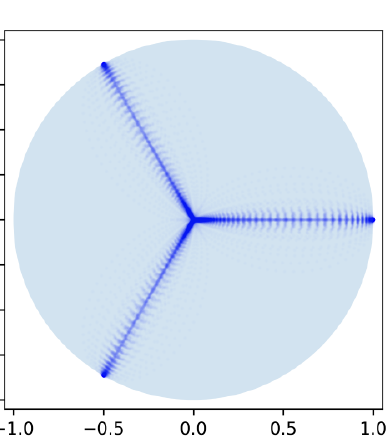}
    \includegraphics[trim=19.5mm 3mm 17mm 3mm, clip, width=0.185\textwidth]{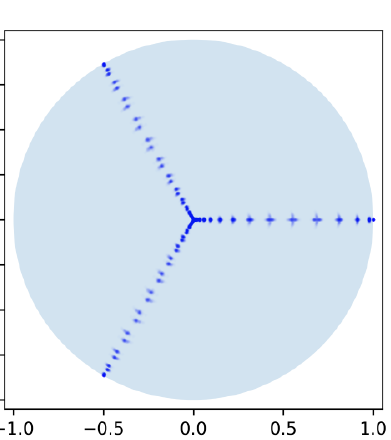}
    \label{fig:circle_map_rational-random}
    \caption{Spectra of $\Gamma^{N,\eps}$ for the circle shift map with $\theta=\frac13$ using $N=1000$ points, for $\eps\in\{10^{-6}, 10^{-5}, 10^{-4}, 10^{-3}, 10^{-2}\}$ (left to right) and two different choices of the discretization: Upper row: points on a regular lattice; lower row: points chosen randomly from a uniform distribution. In the lower row, each plot shows the spectra for 100 realisations of the discretization points. }
    \label{fig:circle_map_rational}
\end{figure}

For irrational $\theta=\frac1\pi\approx\frac13$ the spectra that we obtain experimentally are shown in Figure~\ref{fig:circle_map_irrational}. As expected, with increasing $\eps$ they are in excellent agreement with the asymptotics given by Proposition \ref{prop:eigenvalue asymptotics}.
The map does not exhibit an exact macroscopic 3-cycle.
However, as discussed in Section \ref{sec:TorusSpectrumDiscussion}, it exhibits approximate $q$-cycles for various $q$, in particular for $q=22$ and $q=3$ and these are visible in the corresponding regimes of $\eps$.

In Section \ref{sec:shift map}, the discretization of $\cX$ was a regular lattice as this allowed an analytic  treatment of the system by leveraging the high level of symmetry. The convergence results of Section \ref{sec:Method} do not require such regularity. This is confirmed by Figures \ref{fig:circle_map_rational} and \ref{fig:circle_map_irrational}, where histograms of the spectra of $\Gamma^{N,\eps}$ are shown for 100 realizations of a random choice of the underlying point cloud, i.e.\ each point $x_j$ is chosen independently from a uniform distribution on $S^1$. 

To summarize: The numerical results on the shift map are in excellent aggreement with the theorems on the spectrum in section \ref{sec:shift map}.

\begin{figure}[ht] 

\begin{subfigure}[b]{\textwidth}
    \centering
    \includegraphics[trim=28mm 7mm 35mm 12mm, clip, width=0.21\textwidth]{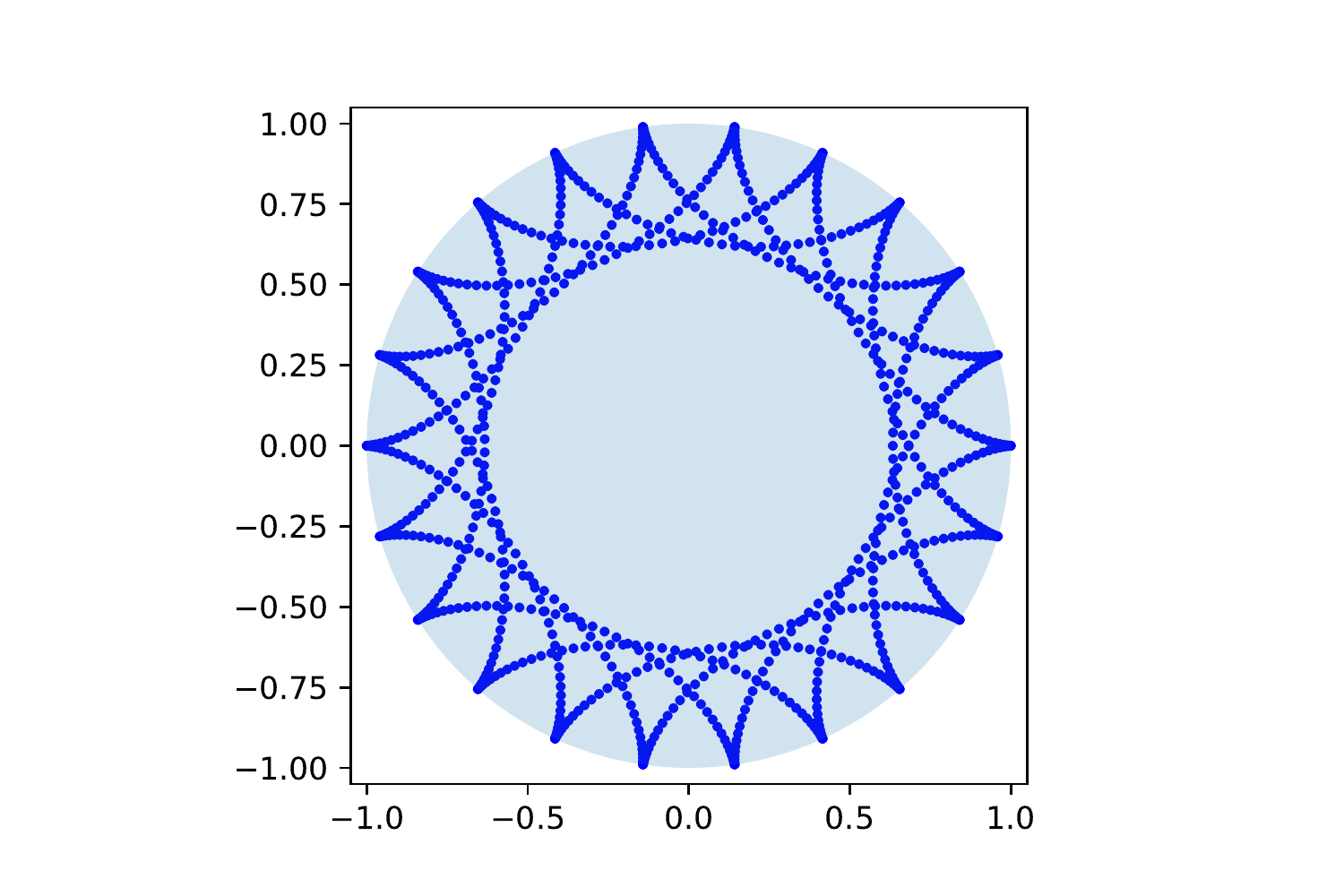}
    \includegraphics[trim=39mm 7mm 35mm 12mm, clip, width=0.185\textwidth]{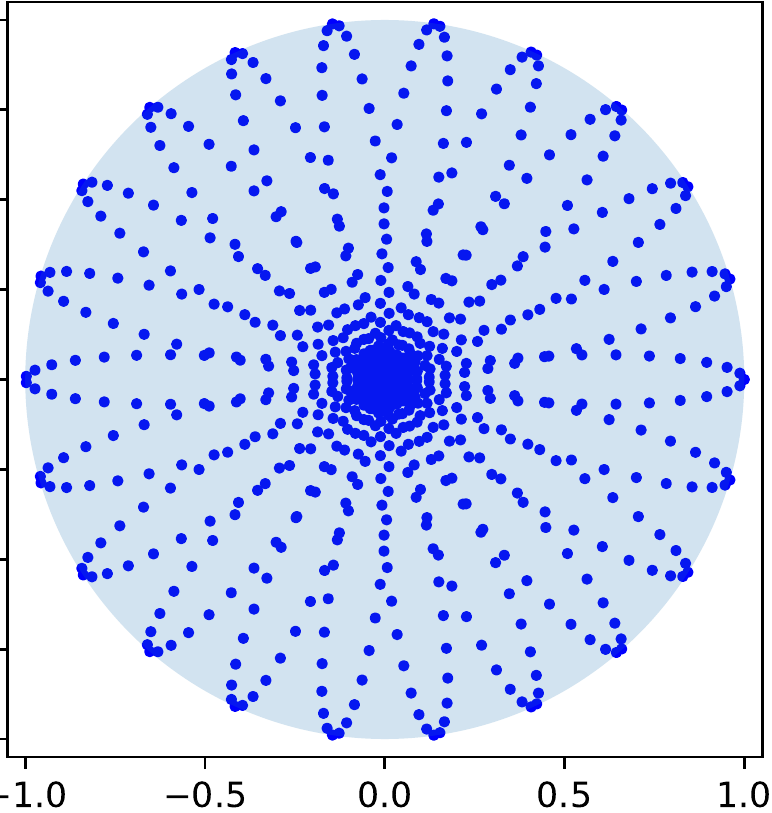}
    \includegraphics[trim=39mm 7mm 35mm 12mm, clip, width=0.185\textwidth]{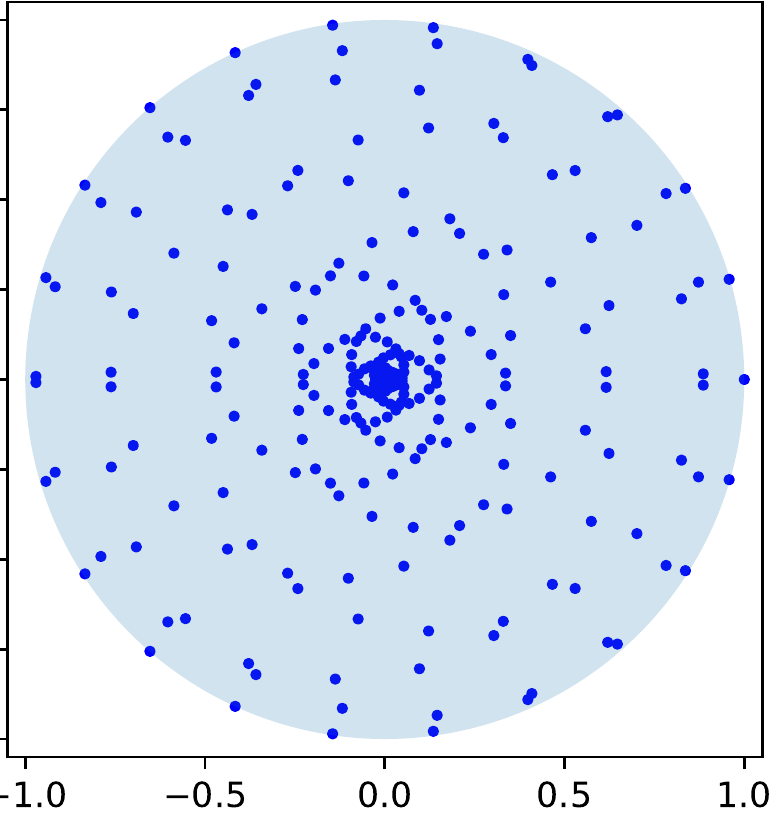}
    \includegraphics[trim=39mm 7mm 35mm 12mm, clip, width=0.185\textwidth]{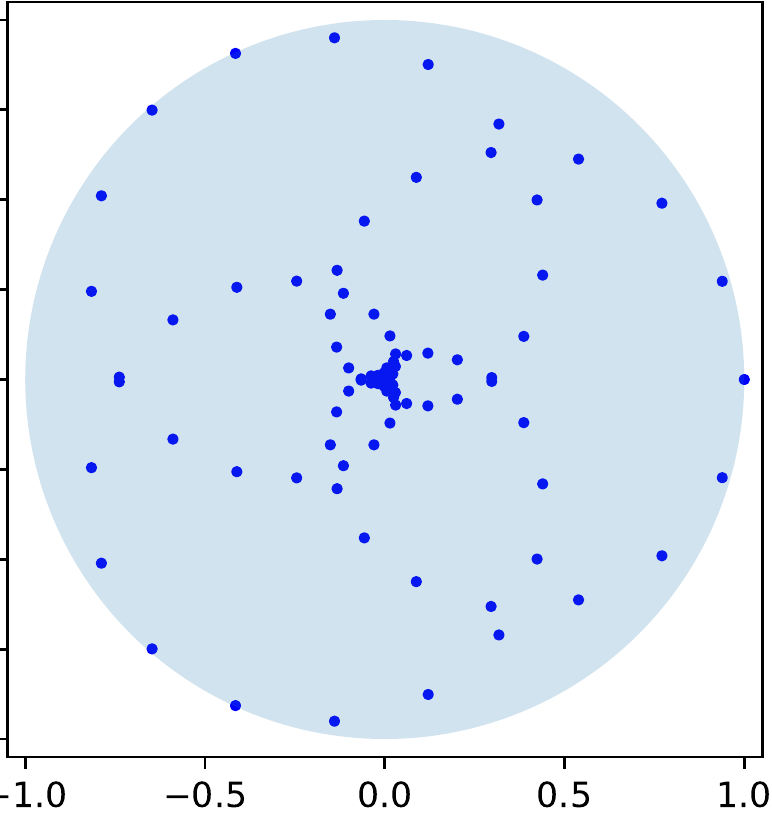}
    \includegraphics[trim=39mm 7mm 35mm 12mm, clip, width=0.185\textwidth]{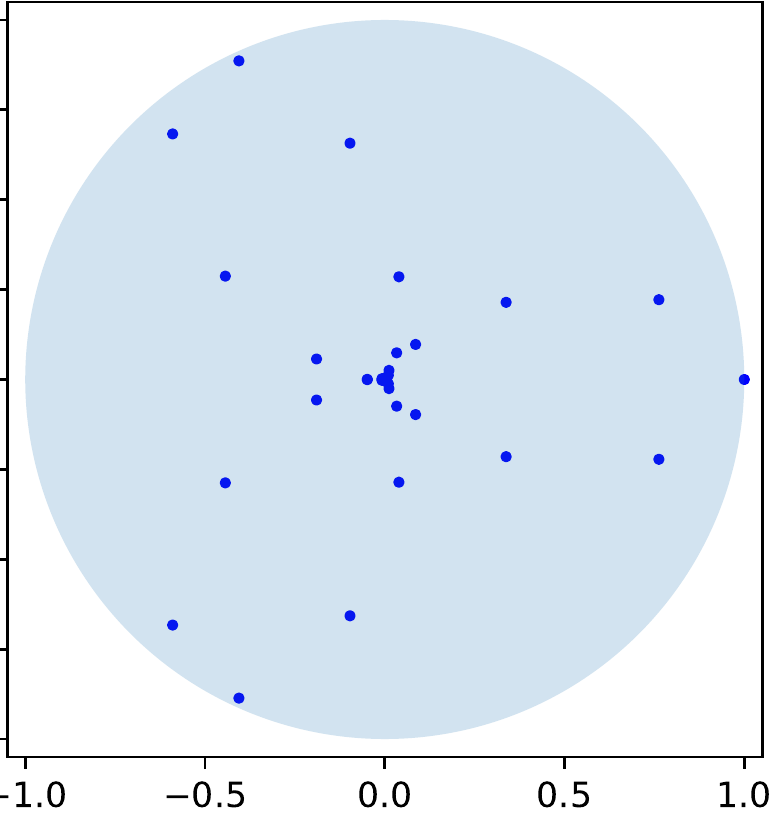}
\end{subfigure}

\begin{subfigure}[b]{\textwidth}
    \centering
    \includegraphics[trim=14mm 3mm 17mm 3mm, clip, width=0.21\textwidth]{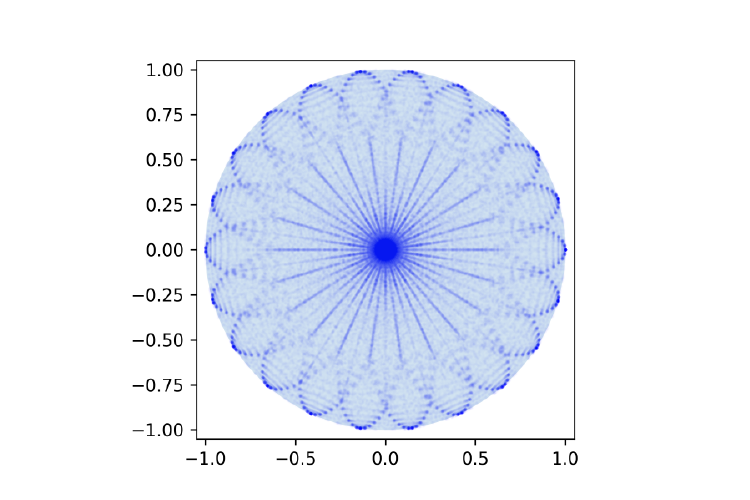}
    \includegraphics[trim=19.5mm 3mm 17mm 3mm, clip, width=0.185\textwidth]{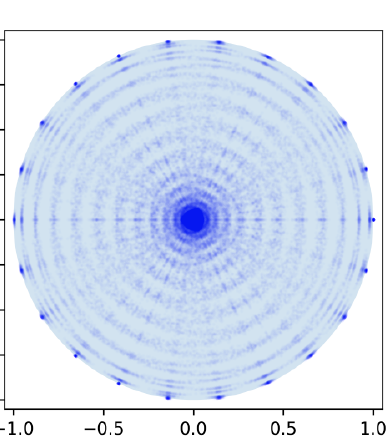}
    \includegraphics[trim=19.5mm 3mm 17mm 3mm, clip, width=0.185\textwidth]{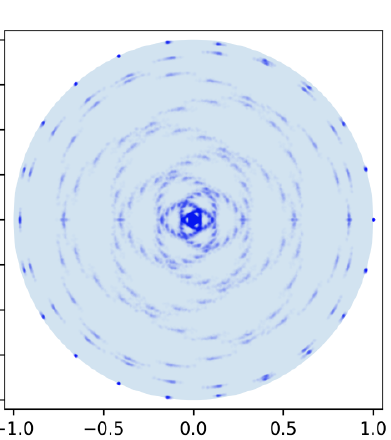}
    \includegraphics[trim=19.5mm 3mm 17mm 3mm, clip, width=0.185\textwidth]{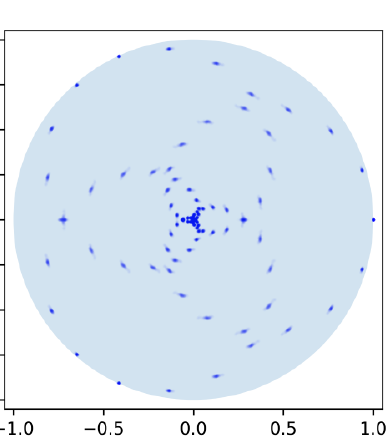}
    \includegraphics[trim=19.5mm 3mm 17mm 3mm, clip, width=0.185\textwidth]{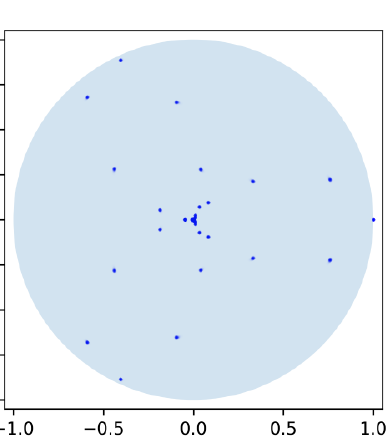}
    \label{fig:circle_map_irrational-random} 
\end{subfigure}

\begin{subfigure}[b]{\textwidth}
    \centering
    \includegraphics[trim=28mm 7mm 35mm 5mm, clip,  width=0.21\textwidth]{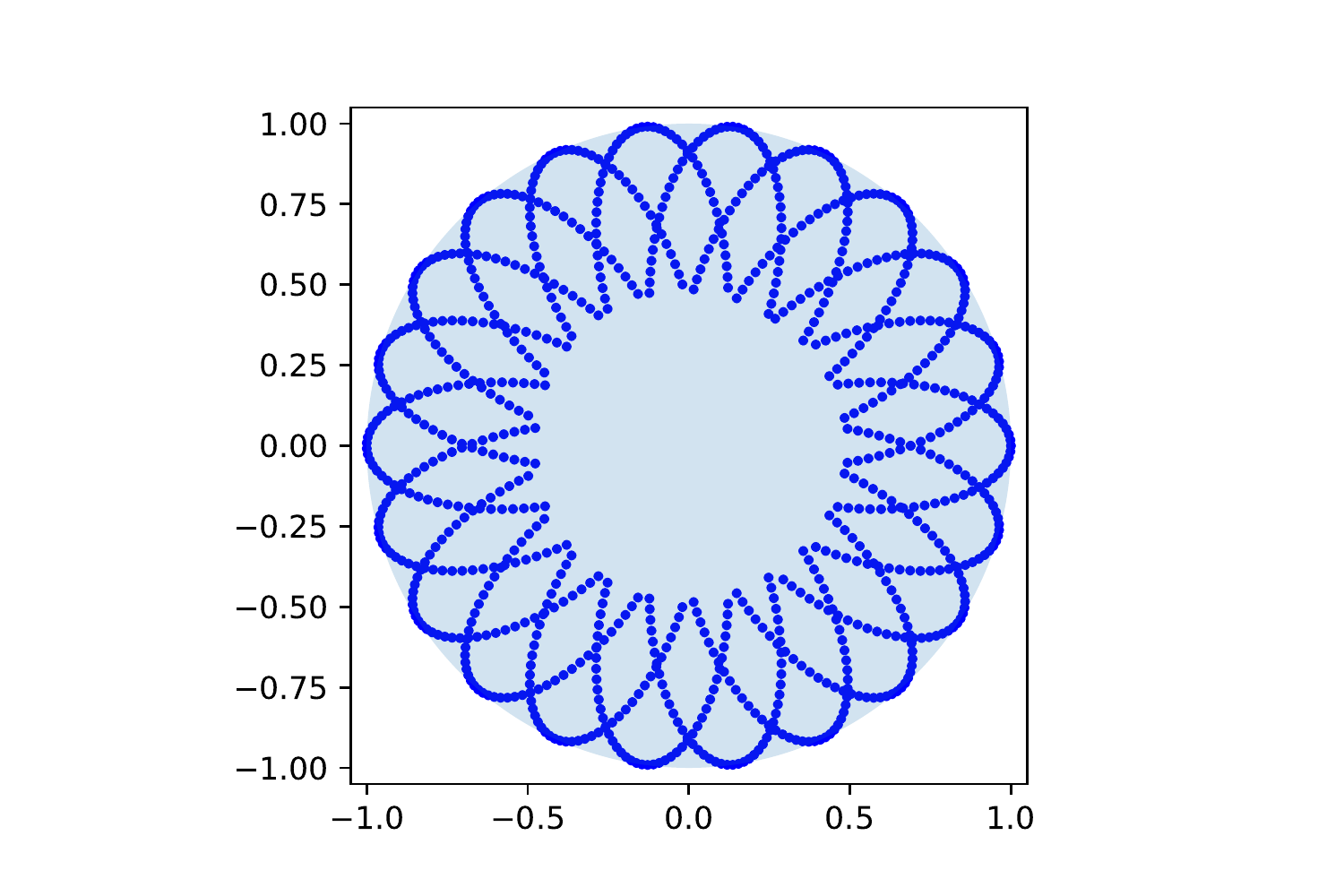}
    \includegraphics[trim=39mm 7mm 35mm 5mm, clip,  width=0.185\textwidth]{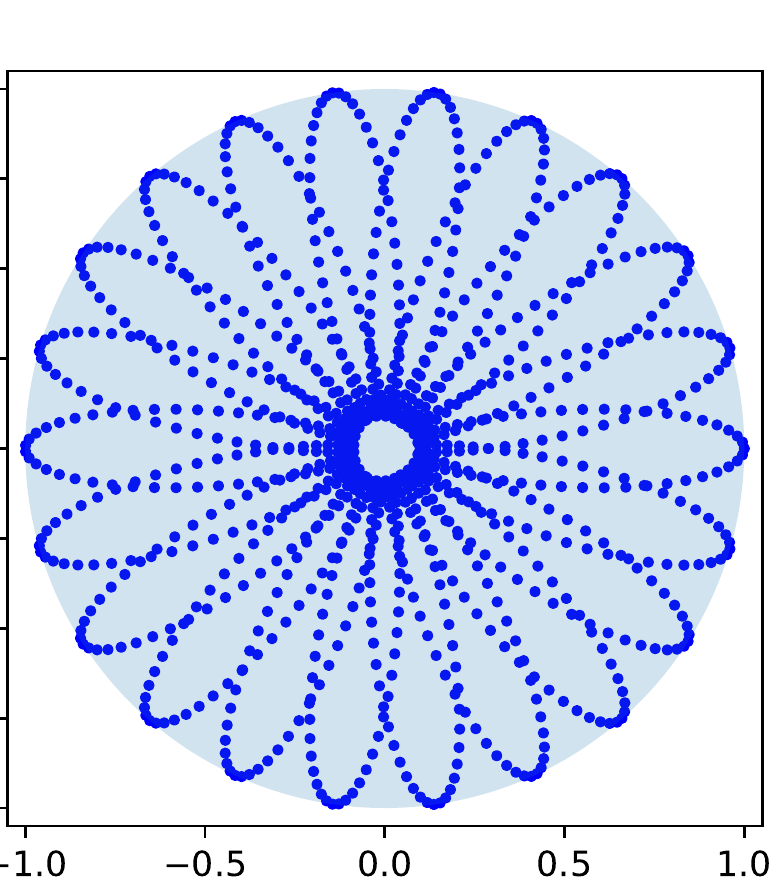}
    \includegraphics[trim=39mm 7mm 35mm 5mm, clip, width=0.185\textwidth]{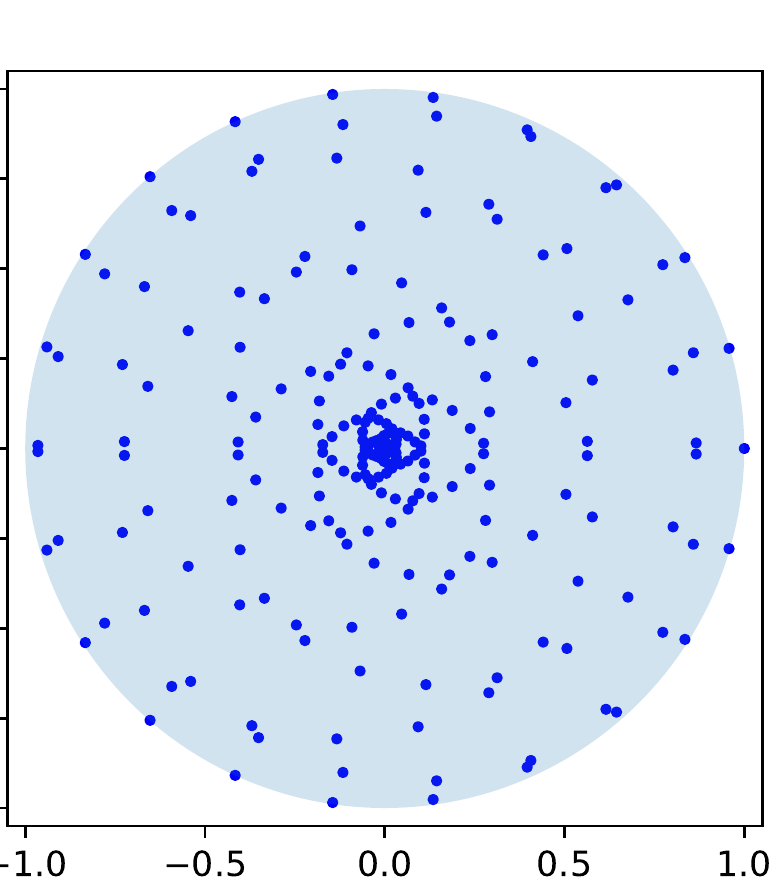}
    \includegraphics[trim=39mm 7mm 35mm 5mm, clip, width=0.185\textwidth]{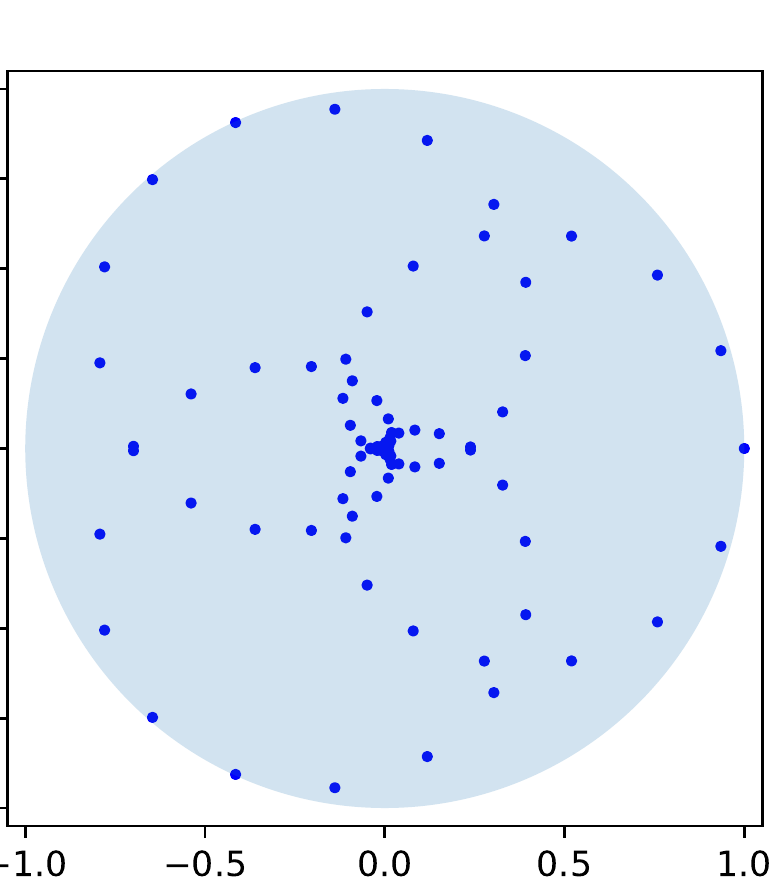}
    \includegraphics[trim=39mm 7mm 35mm 5mm, clip, width=0.185\textwidth]{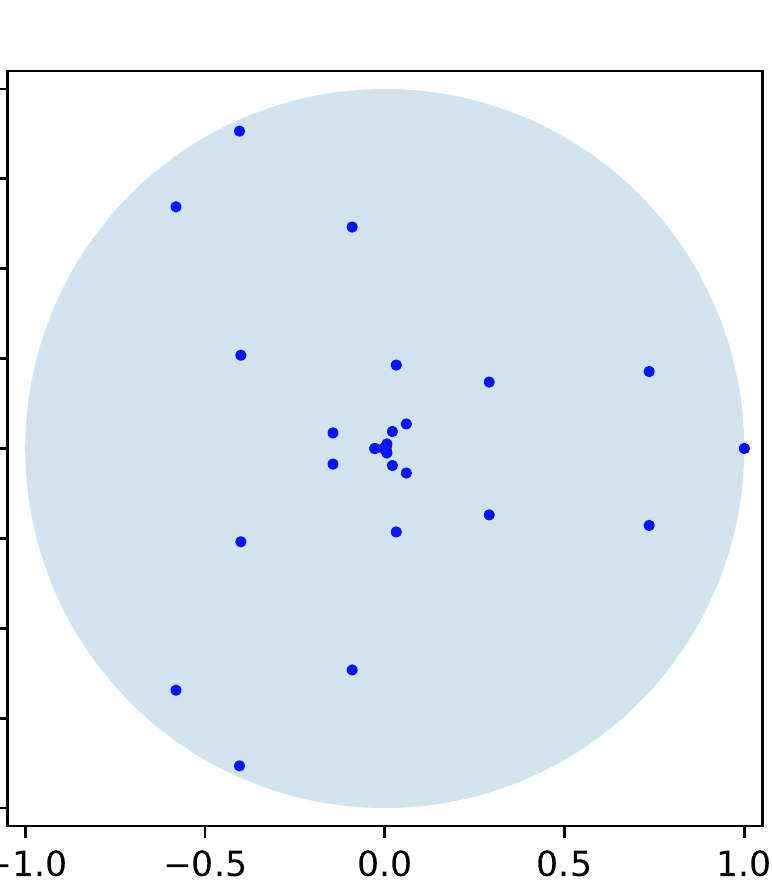}
\end{subfigure}

    \caption{Spectra of $\Gamma^{N,\eps}$ for the circle shift map with $\theta=\frac1\pi$ using $N=1000$ points, for $\eps\in \{10^{-5}, 10^{-4},10^{-3},10^{-2}, 10^{-1}\}$ (left to right) and two different choices of the discretization: Upper row: points on a regular lattice; center row: points chosen randomly from a uniform distribution. In the center row, each plot shows the spectra for 100 realisations of the discretization points. For comparison, we show the eigenvalue asymptotics from Proposition \ref{prop:eigenvalue asymptotics} for $\eps\in 3\cdot \{10^{-7}, 10^{-6},10^{-5}, 10^{-4},10^{-3}\}$ (left to right) in the lower row.}
    \label{fig:circle_map_irrational}
\end{figure}

\subsection{The Lorenz system}  
\label{sec:ExamplesLorenz}

For a second experiment, we consider the classical Lorenz system \cite{Lo63}
\begin{align*}
\dot u & = \sigma(v-u)\\
\dot v & = u(\rho-w) - v\\
\dot w & = uv-\beta w
\end{align*}
with parameter values $\sigma=10, \rho = 28, \beta = 8/3$.  The system possesses a robust strange attractor \cite{Tu99} which can be decomposed into several almost invariant sets \cite{froyland2003detecting}.  We approximate the attractor by a point cloud $x_1^N,\ldots,x^N_N$ resulting from an equidistant sampling of the trajectory  $X=(u,v,w):[200,2000]\to\R^3$ with initial value $X(0)=(1,1,1)$.  The map $F$ is the flow map $F^\tau$ with $\tau=0.1$.   

\begin{figure}[ht]
    \centering
    \includegraphics[width=0.47\textwidth]{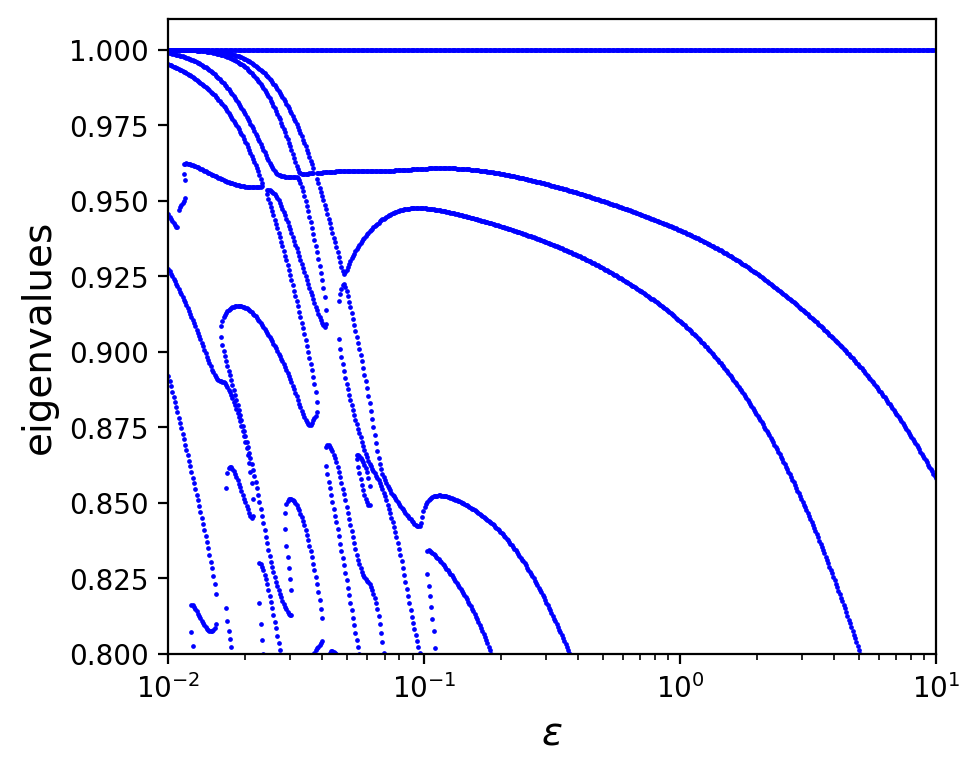}
    \quad
    \includegraphics[width=0.47\textwidth]{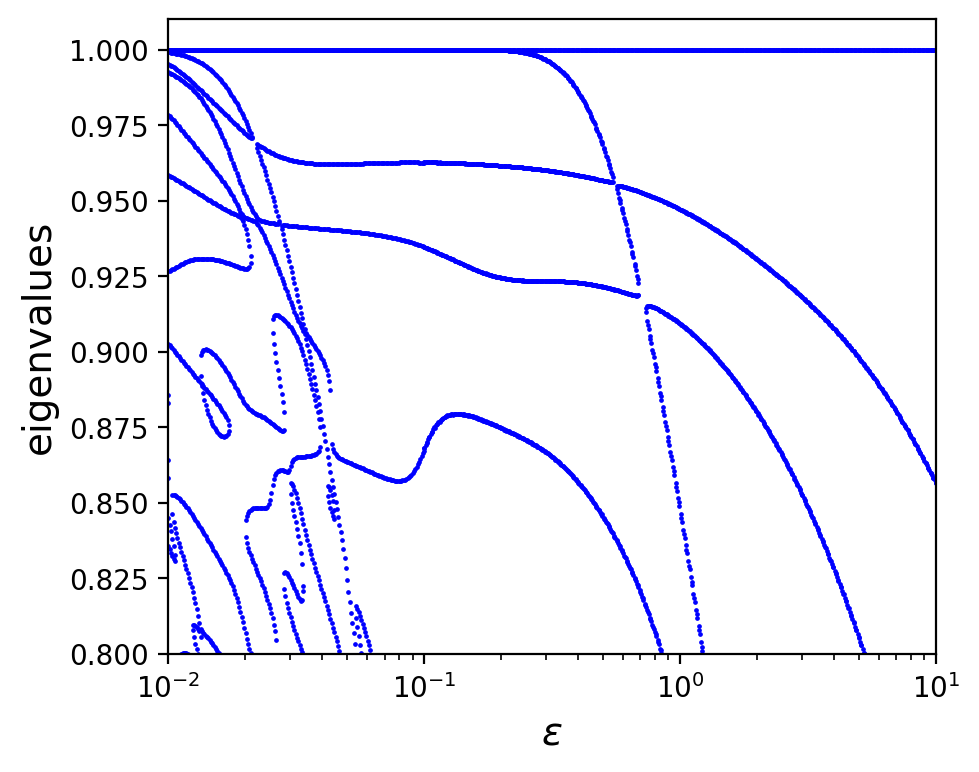}
    \caption{Largest real eigenvalues of $\Gamma^{N,\eps}$ with $N=1000$ (left) and $N=2000$ (right) as a function of $\eps$ for the Lorenz system.}
    \label{fig:Lorenz}
\end{figure}

\begin{figure}[ht]
    \centering
    \includegraphics[trim=0pt 0pt 50pt 0pt, clip, width=0.45\textwidth]{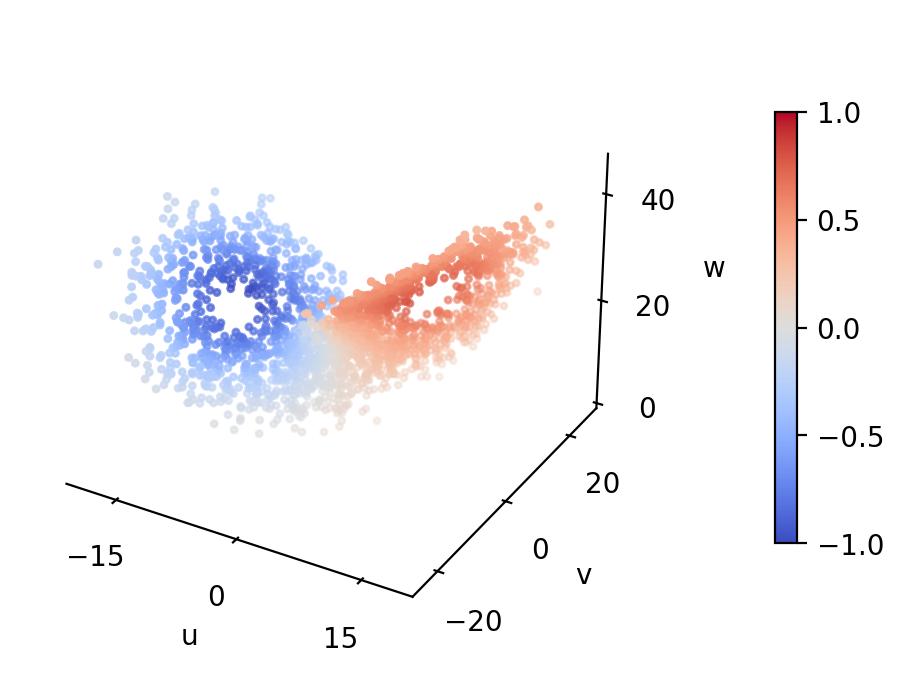}
    \includegraphics[width=0.53\textwidth]{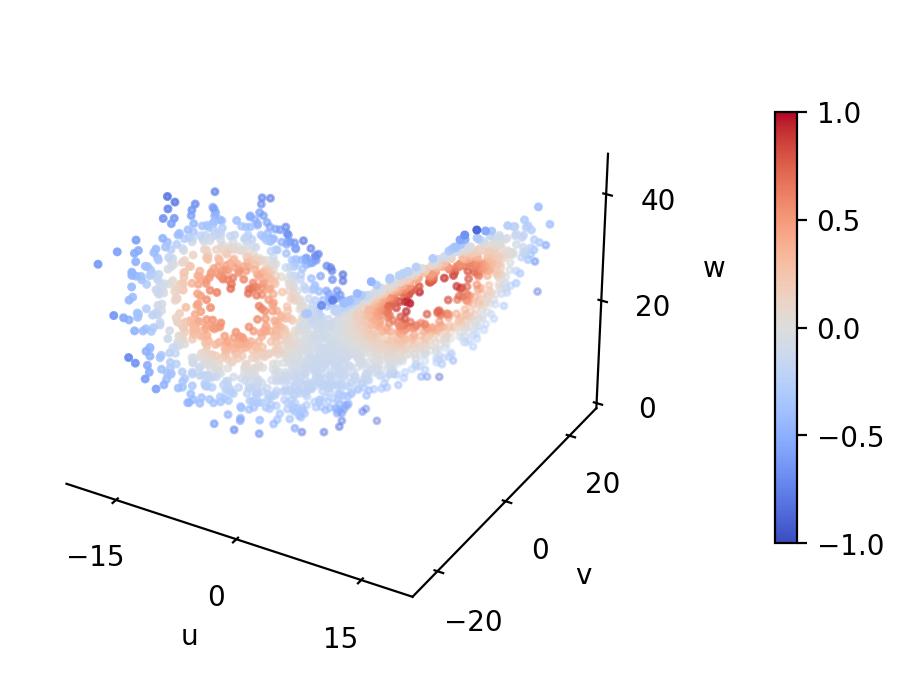}
    \caption{Eigenvectors at $\lambda_1$ (left) and  $\lambda_2$ (right) of $\Gamma^{N,\eps}$ with $N=2000$ and $\eps=10$ for the Lorenz system.}
    \label{fig:Lorenz2}
\end{figure}

The largest real eigenvalues of $\Gamma^{N,\eps}$ for $N=1000$ and $N=2000$ are shown in Figure~\ref{fig:Lorenz} as a function of $\eps$. For both values of $N$, there are two real eigenvalues $\lambda_1, \lambda_2 < 1$ that decay much slower with increasing $\eps$ than the other eigenvalues.  The corresponding eigenvectors each give rise to a decomposition of the point cloud into two almost invariant sets via their sign structure as shown in Figure~\ref{fig:Lorenz2}. This is in aggreement with previuous results on the Lorenz system in, e.g., \cite{DeJu99,froyland2003detecting}. For $N=2000$ points, there is also a third eigenvalue $\lambda_3$ very close to 1 which quickly decays at $\eps\approx 1$.
It corresponds to a point $x_i$ near the origin where the point density is low and $F(x_i) \approx x_i$, such that for sufficiently small $\eps$, mass at $F(x_i)$ gets transported almost exclusively back to $x_i$, thus forming a spurious invariant set that decays quickly as $\sqrt{\eps}$ reaches the scale of nearest neighbor distances around $x_i$. Such isolated points become less likely as $N$ is increased and their eigenvectors can be identified easily.

\subsection{Alanine dipeptide}
\label{sec:ExamplesAlanineDipeptide}

As a third experiment we analyse the dynamics of alanine dipeptide, a small biomolecule which is often used as a non-trivial model system for the analysis of macroscopic dynamics, cf.\ for example \cite{klus2018kernel}.  Like in the Lorenz system, we use trajectory data, here on the positions of the heavy atoms as provided by \cite{nuske2017markov,wehmeyer2018time}\footnote{The data is available for download at \textbf{\href{https://markovmodel.github.io/mdshare/ALA2/}{https://markovmodel.github.io/mdshare/ALA2/}}.}.  The original time series results from time integration of the molecule over 250\,ns with a time step of 1\,ps, yielding $M=250\,000$ points $\hat x_1,\ldots,\hat x_M$ in $\R^{30}$.  Here, we subsample this trajectory and use only every 50th (resp. 25th) point, yielding a sequence $x^N_1,\ldots,x^N_N$, $N=5\,000$ (resp. $10\,000$), in $\R^{30}$ which forms the data for our analysis.  We define the map $F$ by time-delay, that is we let $F(\hat x_j):=\hat x_{j+10}$ for $j=1,\ldots,M-10$.  This time lag of 10 ps has been chosen experimentally such that the real spectrum of $\Gamma^{N,\eps}$ is reasonably close to 1. We note, however, that the macroscopic structure of the eigenvectors, cf.\ Figure \ref{fig:AD2}, remains the same for time lags between 1\,ps and 30\,ps while, of course, the spectrum changes.

\begin{figure}[ht]
    \centering
    \includegraphics[width=0.48\textwidth]{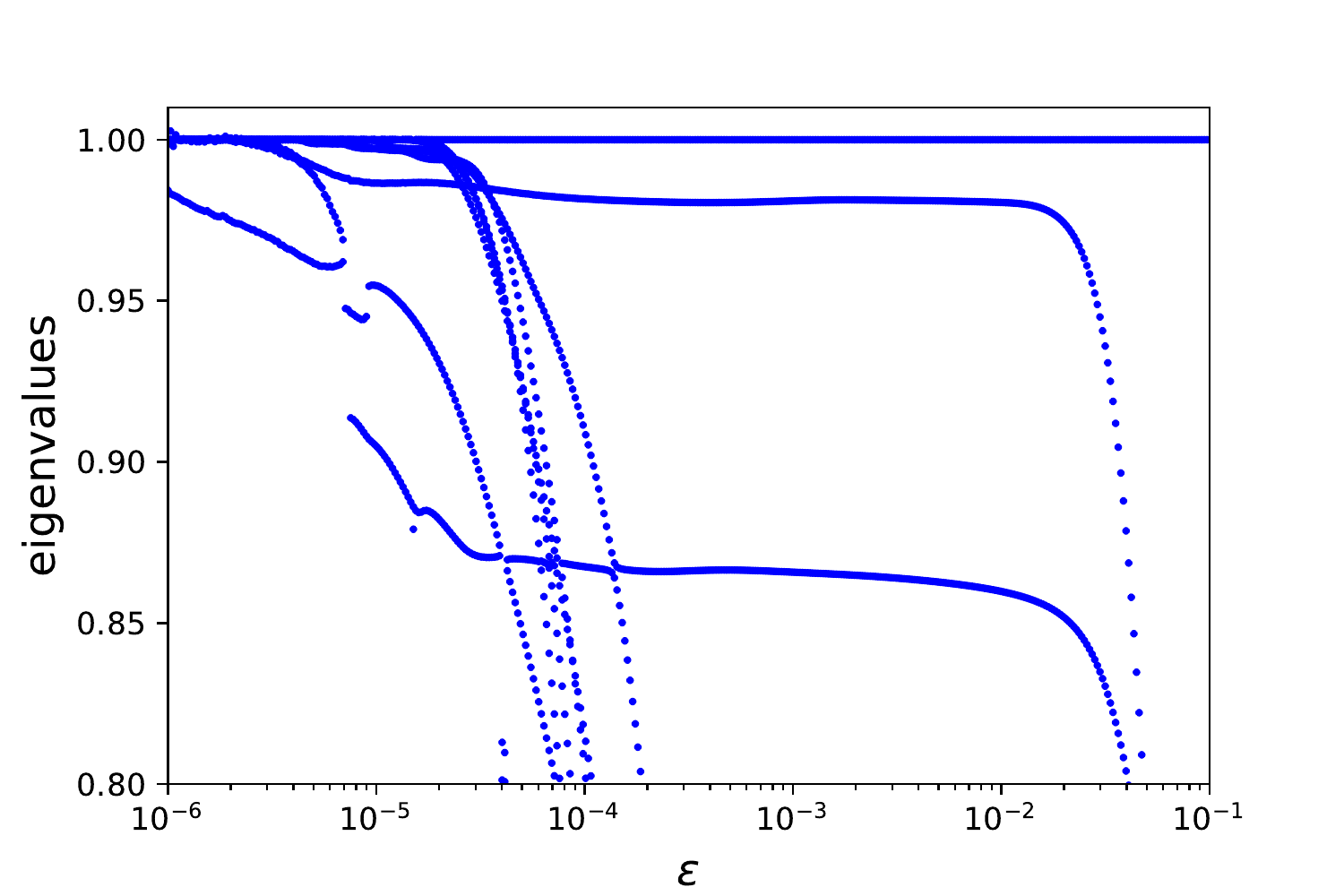}
    \includegraphics[width=0.48\textwidth]{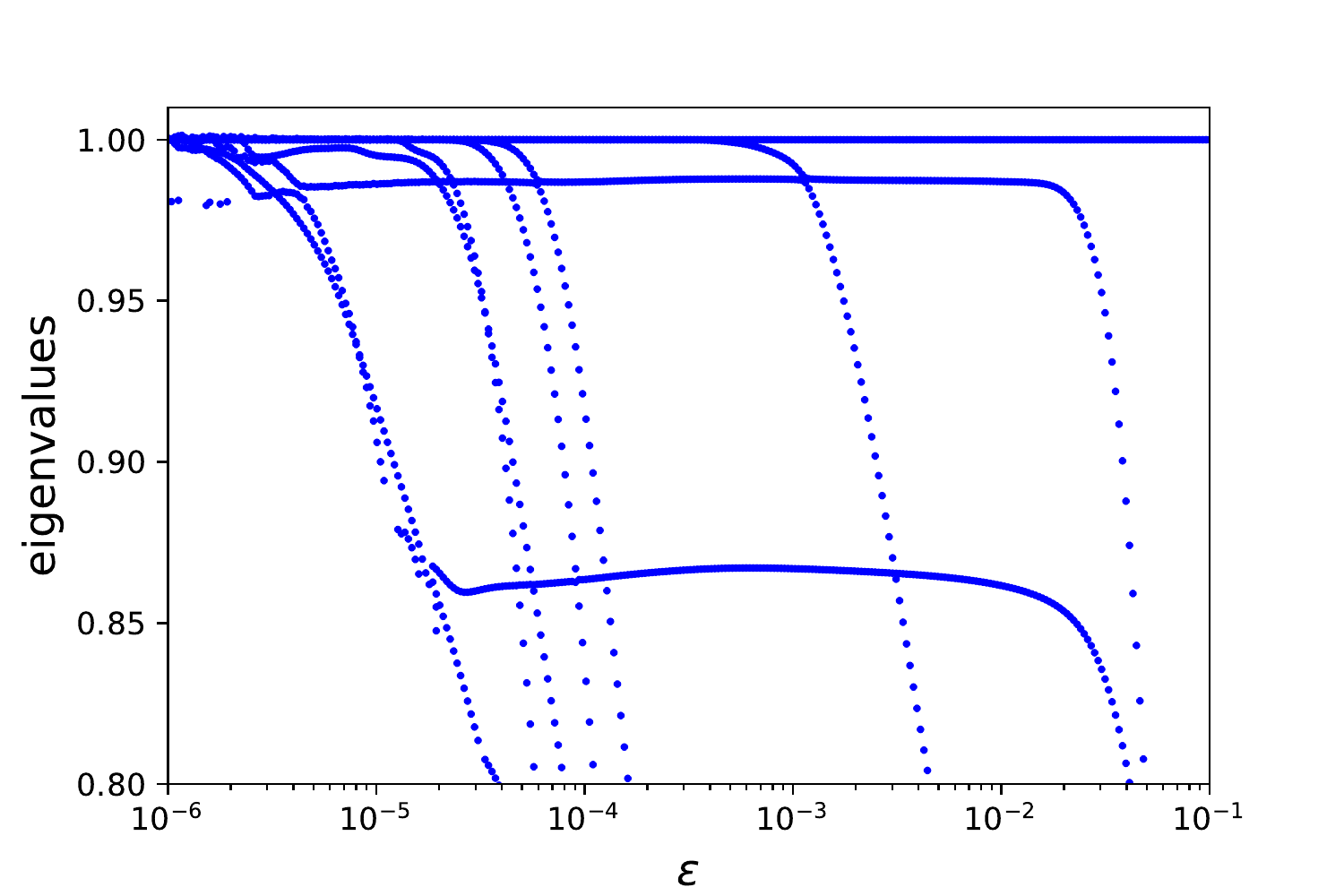} 
    \caption{The 10 largest real eigenvalues of $\Gamma^{N,\eps}$ as a function of $\eps$ for the alanine-dipeptide model for $N=5000$ (left) and $N=10000$ samples (right).}
    \label{fig:AD1}
\end{figure} 

The left panel in Figure \ref{fig:AD1} shows the dominant real spectrum of $\Gamma^{N,\eps}$ close to $1$ as a function of~$\eps$ for $N=5000$ and $N=10000$. 
There appear to be two eigenvalues $\lambda_1\approx 0.98$ and $\lambda_2 \approx 0.87$ which are essentially constant over several orders of magnitude of $\eps$. 
In fact, the corresponding eigenvectors give rise to the decomposition of the point cloud into three almost invariant sets as shown in Figure~\ref{fig:AD2}. 
There are several eigenvalues that are very close to one for $\eps\in [10^{-6},10^{-4.5}]$ and that decay rapidly  for $\eps\in [10^{-4.5},10^{-2}]$. In contrast, $\lambda_1$ and $\lambda_2$ do not decay for $\eps\in [10^{-4.5},10^{-2}]$. We conjecture that these correspond to spurious invariant subsets of the point cloud as described in the experiment on the Lorenz system. 

In order to better understand the behaviour of the eigenvalues with increasing $\eps$, we consider the simple Markov chain model with three states shown in Figure~\ref{fig:simple_MM}. We assume that the distance $d_1$ between the states 2 and 3 is much (i.e.\ an order of magnitude) smaller than the distance $d_2$ between 1 and 2 resp.\ 3. The transition matrix for this Markov chain is given by 
\[
T = \begin{bmatrix}
     1-2p_2 & p_2 & p_2 \\
     p_2 & 1-p_1-p_2 & p_1 \\
     p_2 & p_1 & 1-p_1-p_2
\end{bmatrix}.     
\]
Let $G^\eps\in\R^{3\times 3}$ denote the matrix of the entropically regularized transport plan for the associated distance matrix
\[
 \begin{bmatrix}
     0 & d_2 & d_2 \\
     d_2 & 0 & d_1 \\
     d_2 & d_1 & 0
\end{bmatrix}     ,
\]
so that the entropically smoothed transfer operator is represented by the matrix $\Gamma^\eps=G^\eps T$. There are two realms for $\eps$ for which the eigenvalues of $\Gamma^\eps$ are essentially constant: For $\eps \ll d_2^2$, the smoothing has essentially no effect and the spectrum of $\Gamma^\eps$ approximately coincides with that of $T$ itself.  For $\eps \approx d_2^2$, the smoothing starts to have an effect on the $2\leftrightarrow 3$ transition so that the associated eigenvalue starts to drop. At $\eps \approx d_1^2$, also the $1\leftrightarrow \{2,3\}$ transition is affected and the second real eigenvalue close to 1 starts to drop as well. 
 
\begin{figure}[H]
    \centering

\begin{subfigure}[c]{0.48\textwidth}
    \centering
    \begin{tikzpicture}[scale=0.60, ->, >=stealth', auto, semithick]
    
        \node[state] at (0, 1) (1)  {$1$};
        \node[state] at (5, 2) (2)  {$2$};
        \node[state] at (5, 0) (3) {$3$};

        \path 
        (1) edge [loop left] node {$1-2p_2$} (1)
        (2) edge [loop above] node {$1-p_1-p_2$} (2)
        (3) edge [loop below] node {$1-p_1-p_2$} (3)
        (1) edge [<->] node {$p_2$} (2)
        (1) edge [<->] node {$p_2$} (3)
        (2) edge [<->] node {$p_1$} (3);

        \draw[|-|] (0,-2.5) -- (5,-2.5);
        \draw[-] (2.5,-2.8) node {$d_2$};

        \draw[|-|] (6.5,0) -- (6.5,2);
        \draw[-] (6.8,0.75) node {$d_1$};
     
        \end{tikzpicture}
	\end{subfigure}
	\qquad
	\begin{subfigure}[c]{0.46\textwidth}
    \centering
        \includegraphics[width=1.\textwidth]{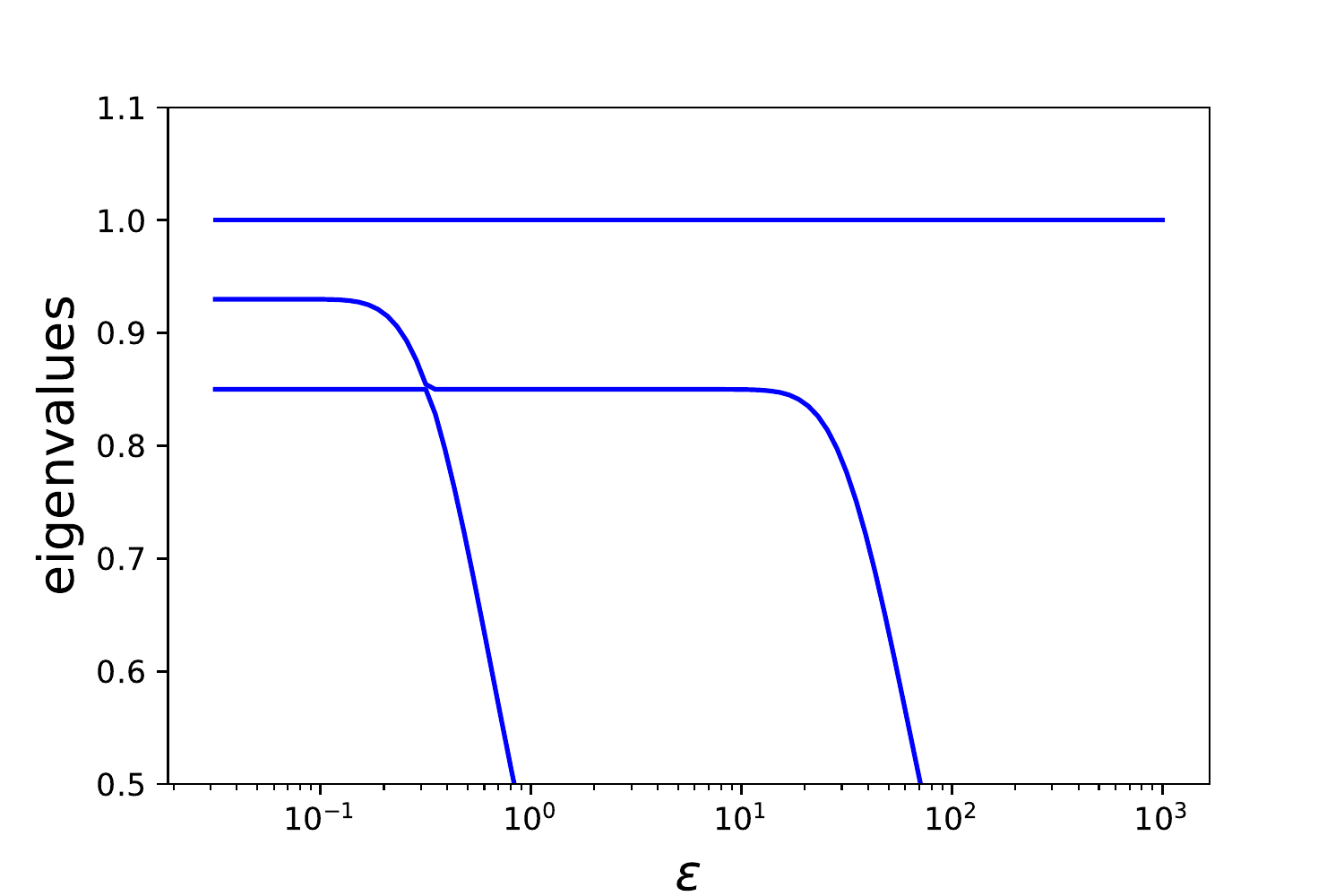}
	\end{subfigure}
    \caption{Simple Markov model showing the qualitative behaviour of the spectrum in Figure~\ref{fig:AD1} for $p_1=0.01$, $p_2=0.05$, $d_1=1$, $d_2=10$.}
    \label{fig:simple_MM}
\end{figure} 

Let us now return to the alanine dipeptide molecule. In Figure \ref{fig:AD2}, we show the eigenvectors at $\lambda_1\approx 0.98$ and $\lambda_2\approx 0.86$ for different values of $\eps$, projected onto the two relevant dihedral angles of the molecule.  A k-means clustering of these (for $\eps=0.01$) yields the three almost invariant (a.k.a.~metastable) sets shown in Figure~\ref{fig:AD3} which correspond to the well-known dominant conformations of the molecule, cf.\ \cite{klus2018kernel}. We stress the fact that we do not use any prior information on the dihedral angles in the computation but merely the raw trajectory data as described above. The angles are only used for the visualization of the eigenfunctions. 

\begin{figure}[ht]
    \centering
    \begin{subfigure}[b]{\textwidth}
        \includegraphics[width=0.24\textwidth]{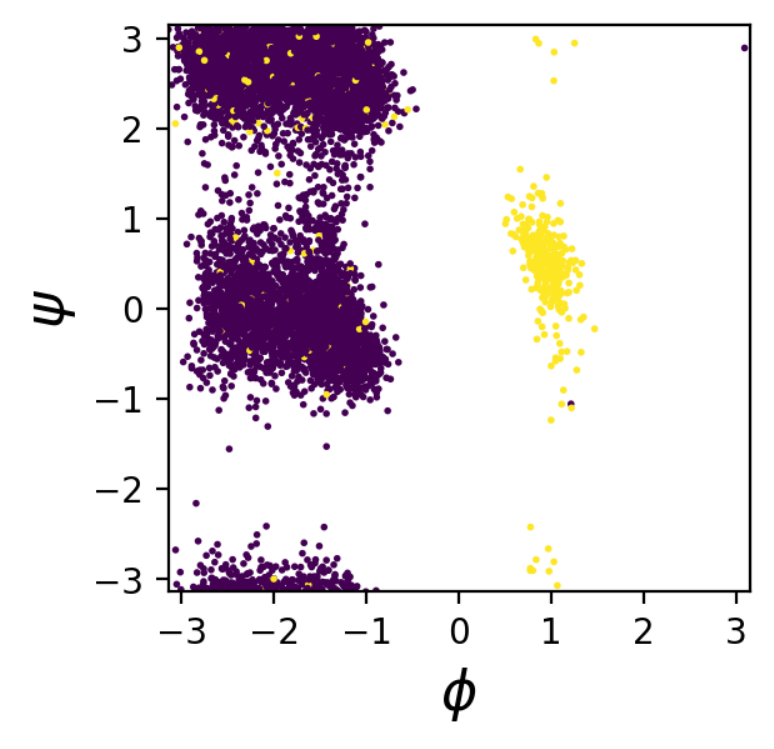}
        \includegraphics[width=0.24\textwidth]{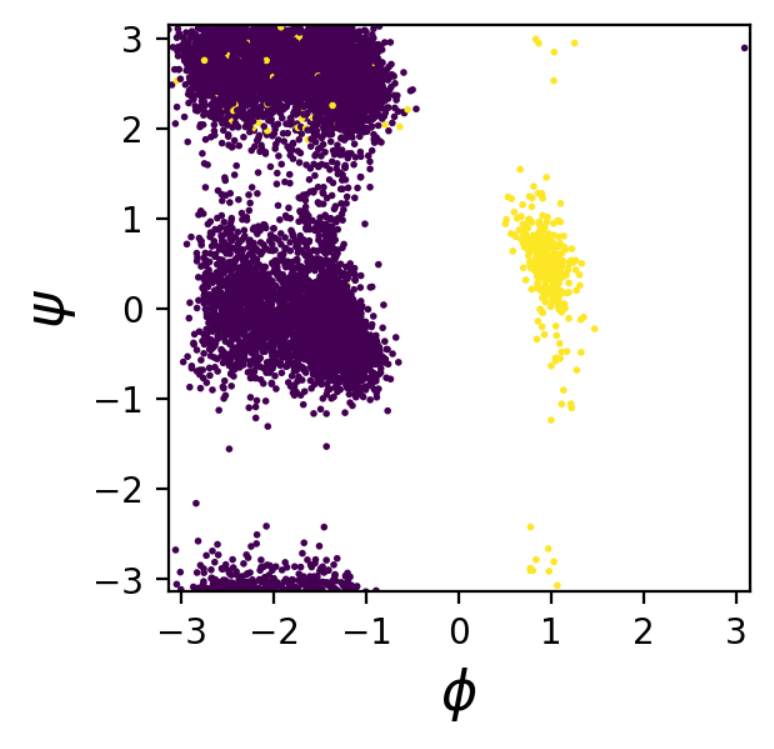}
        \includegraphics[width=0.24\textwidth]{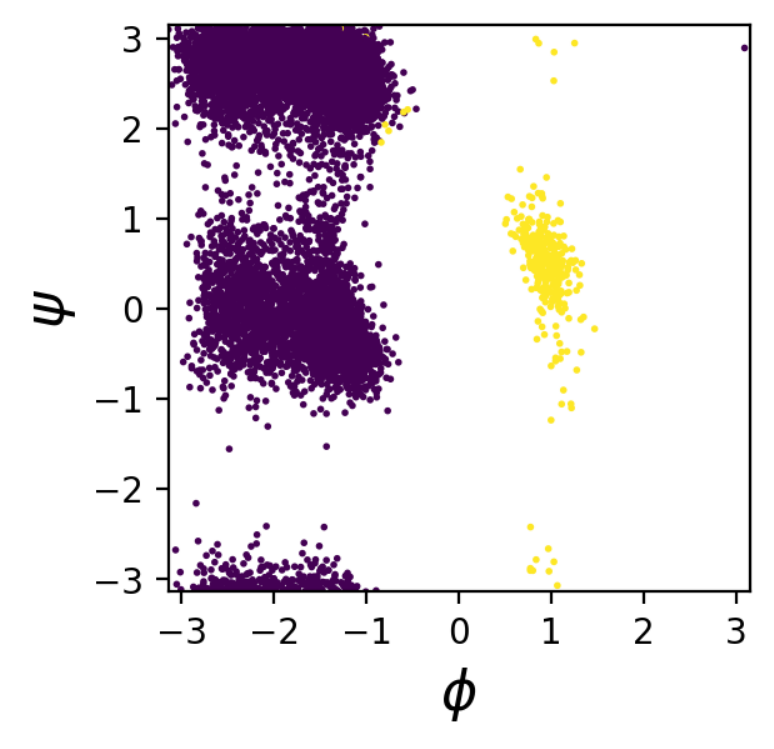}
        \includegraphics[width=0.24\textwidth]{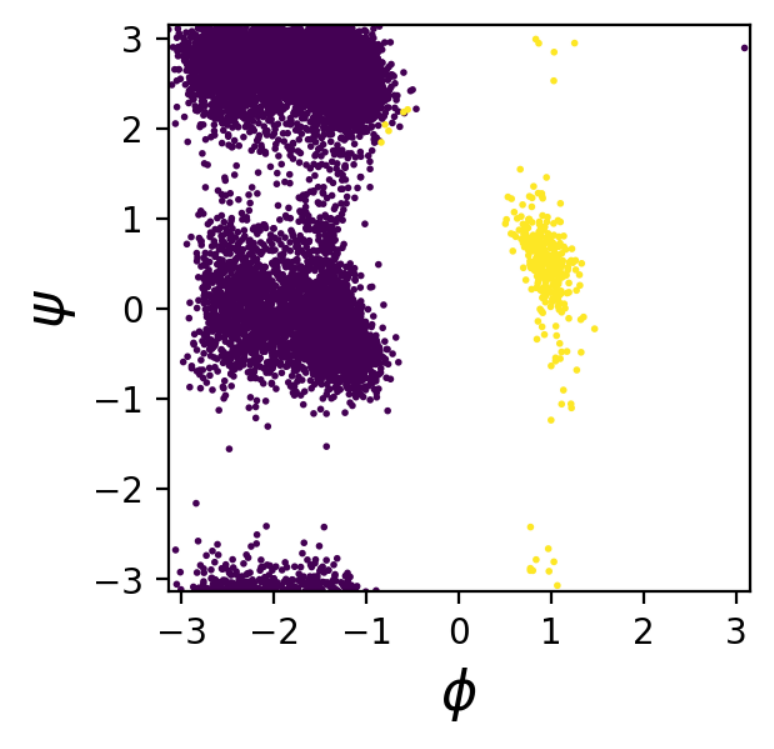}
    \end{subfigure}
    
    \begin{subfigure}[b]{\textwidth} 
        \includegraphics[width=0.24\textwidth]{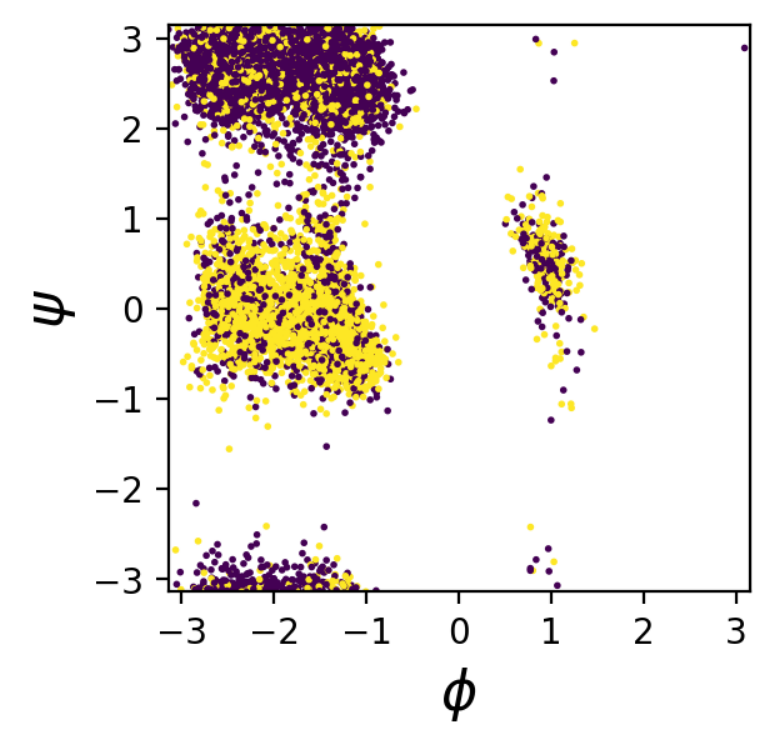}
        \includegraphics[width=0.24\textwidth]{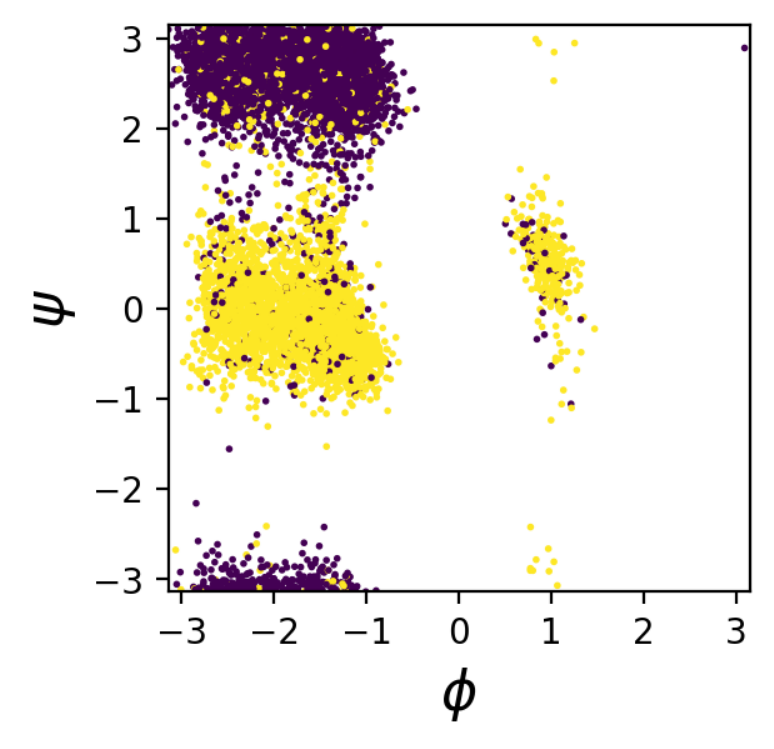}
        \includegraphics[width=0.24\textwidth]{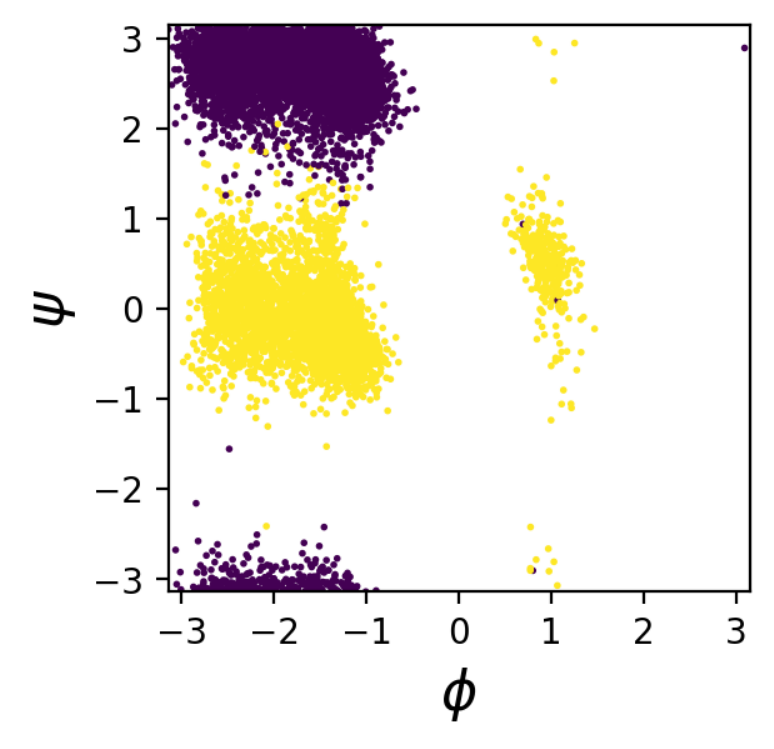}
        \includegraphics[width=0.24\textwidth]{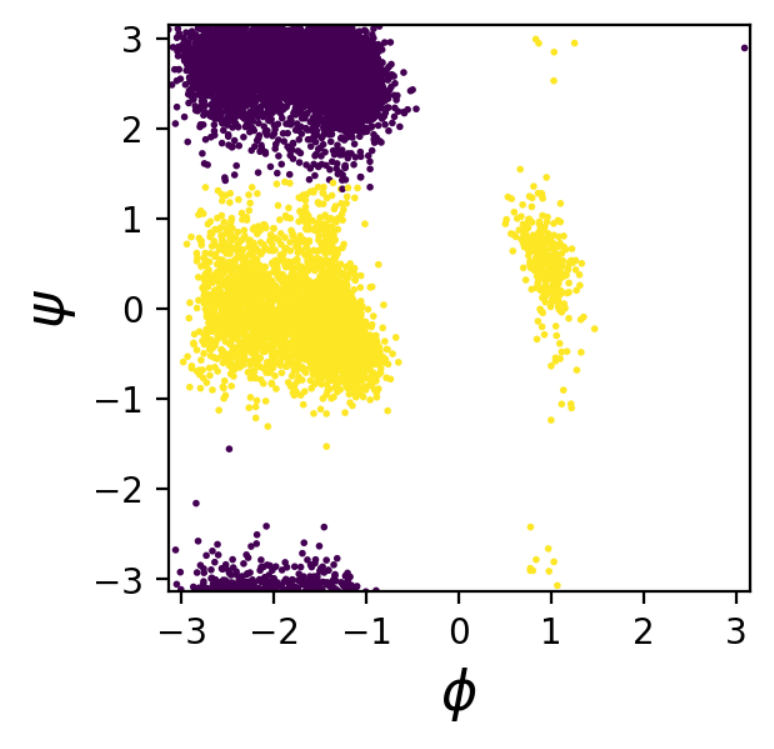}
    \end{subfigure}
    
    \caption{Projections of the eigenvectors of $\Gamma^{N,\eps}$ for the alanine dipeptide model at $\lambda_1\approx 0.98$ (upper row) and $\lambda_2\approx 0.86$ (lower row) for $N=5000$ and $\eps = 3\cdot 10^{-5}, 10^{-4}, 10^{-3}, 10^{-2}$ (from left to right).}  
    \label{fig:AD2}
\end{figure}

\begin{figure}[ht]
    \centering
    \includegraphics[width=0.24\textwidth]{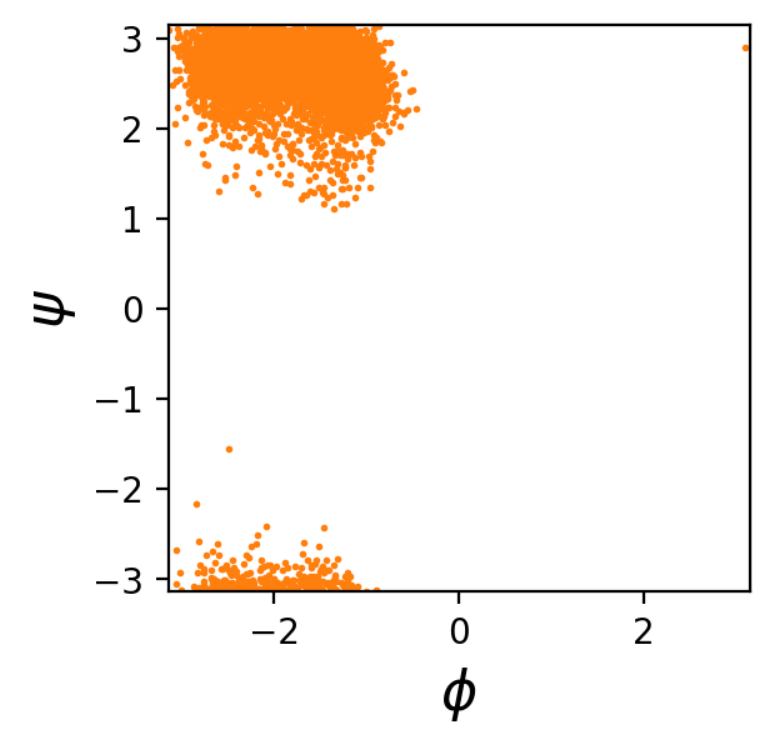}
    \includegraphics[width=0.24\textwidth]{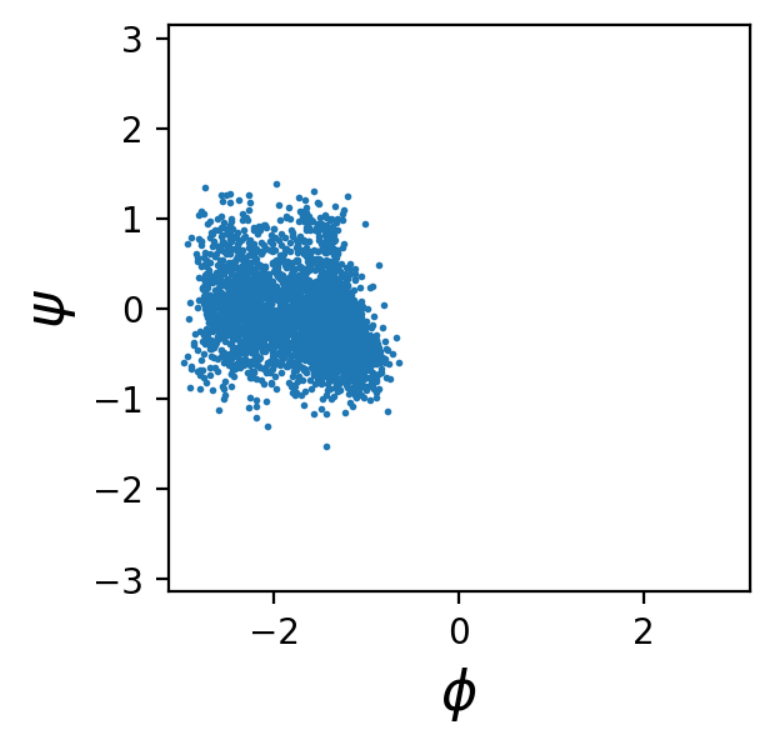}
    \includegraphics[width=0.24\textwidth]{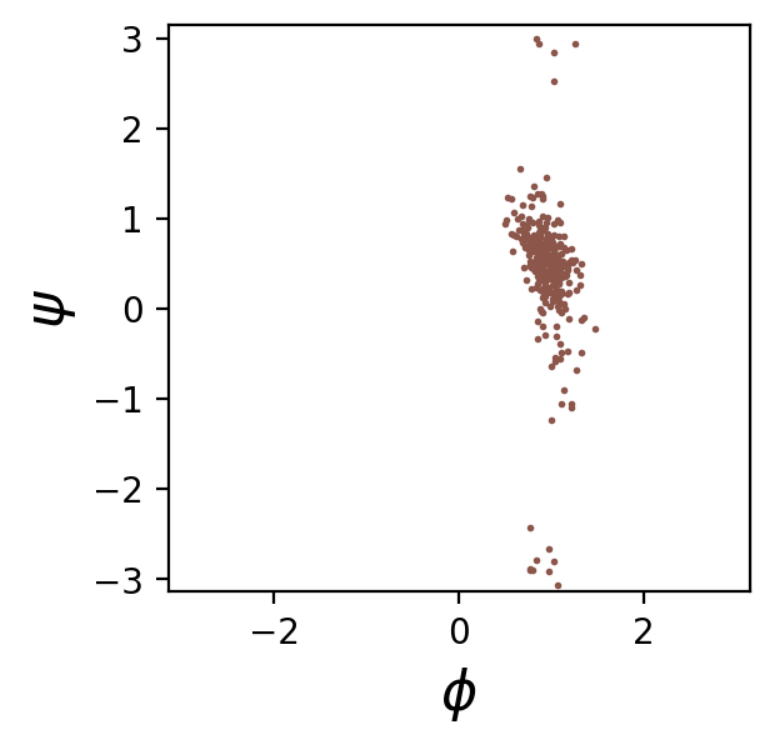}
    \caption{Projections of almost invariant sets as resulting from a k-means clustering of the two eigenvectors show in Figure~\ref{fig:AD2} for $\eps=0.01$.} 
    \label{fig:AD3}
\end{figure}

\section{Comparison to other methods}

We give a brief comparison to related data-based methods for approximating the transfer operator.  

\subsection{Normalized (conditional) Gaussian kernel}
\label{sec:NormalizedGaussians}

In \cite{froyland2021spectral}, a data-based approximation of $T$ was proposed which is related to our approach (the same normalization has also been used in \cite{KoltaiRenger2018}). 
There, a regularized transfer operator $T^\eps:L^2(\mu)\to L^2(\mu)$ also takes the form (cf.\ Section \ref{sec:TEps})
\begin{align}
	\label{eq:TFroyland}
	(T^{\eps} h)(y) & := \int_{\cX} t^{\eps}(x,y)\,h(x) \dd \mu(x), &
    t^{\eps}(x,y) & := \frac{1}{C(x)}\exp\left(\frac{-c(F(x),y)}{\eps}\right)
\end{align}
with $C(x) := \int \exp(-c(F(x),y)/\eps) \dd \mu(y)$.
The kernel $t^{\eps}(x,\cdot)$ is a conditional Gaussian kernel, normalized such that $\int t^\eps(x,y)\dd \mu(y)=1$ for $\mu$-a.e.\ $x$. It becomes equal to \eqref{eq:gFdens} and \eqref{eq:tdens} for the particular choice $\exp(\alpha(F(x))/\eps)=1/C(x)$ and $\beta(y)=0$, which corresponds to initializing $\alpha(F(x))=\beta(y)=0$ and running a single $\alpha$-half-iteration of the Sinkhorn algorithm \eqref{eq:Sinkhorn}.
Like in our approach, the discretized operator $T^{N,\eps}$ results from approximating $\mu$ by the empirical measure $\mu^N$ supported on the data points.

The dominating real spectrum of \eqref{eq:TFroyland} for the Lorenz system is shown in Figure~\ref{fig:spectrum_froyland_1} (left). For large $\eps$ it is qualitatively similar to that of the entropic transfer operator.
Only applying a single Sinkhorn half-iteration means that in general one has $$\int t^\eps(x,y)\dd \mu(x) \neq 1$$ and thus the constant density $h: x \mapsto 1 \in L^2(\mu)$ is generally not a fixed point of $T^\eps$. So this smoothing procedure perturbs the invariant measure of the operator. This also holds for the discrete approximation $T^{N,\eps}$, as shown in Figure \ref{fig:spectrum_froyland_2}. Indeed, the stationary density exhibits substantial fluctuations and several local spurious spikes.

\begin{figure}[ht]
    \centering
    \includegraphics[width=0.45\textwidth]{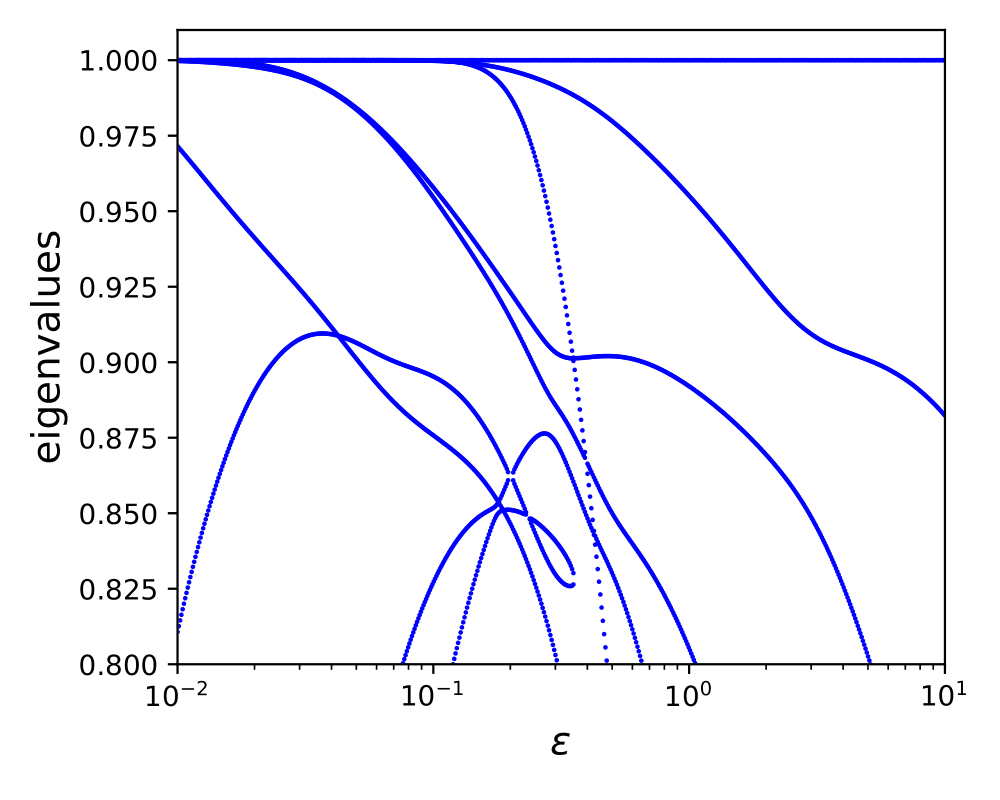}
    \quad
    \includegraphics[width=0.45\textwidth]{images/lorenz/spectrum_n1000_entropic}
    \caption{Real spectrum of $T^{N,\eps}$ with $N=1000$ as a function of $\eps$ for the Lorenz system  using the construction in \eqref{eq:TFroyland} from \cite{froyland2021spectral} (left), and using the entropic transfer operator (right, identical to the left panel in Figure \ref{fig:Lorenz}).} 
    \label{fig:spectrum_froyland_1}
\end{figure}

\begin{figure}[ht]
    \centering
    \includegraphics[width=0.45\textwidth]{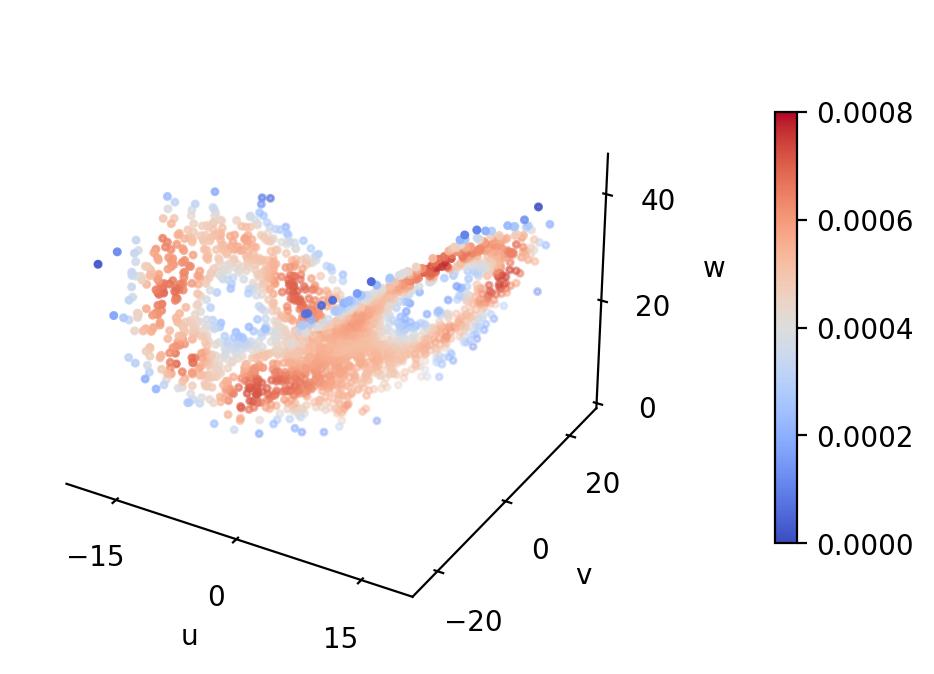}
    \quad
    \includegraphics[width=0.45\textwidth]{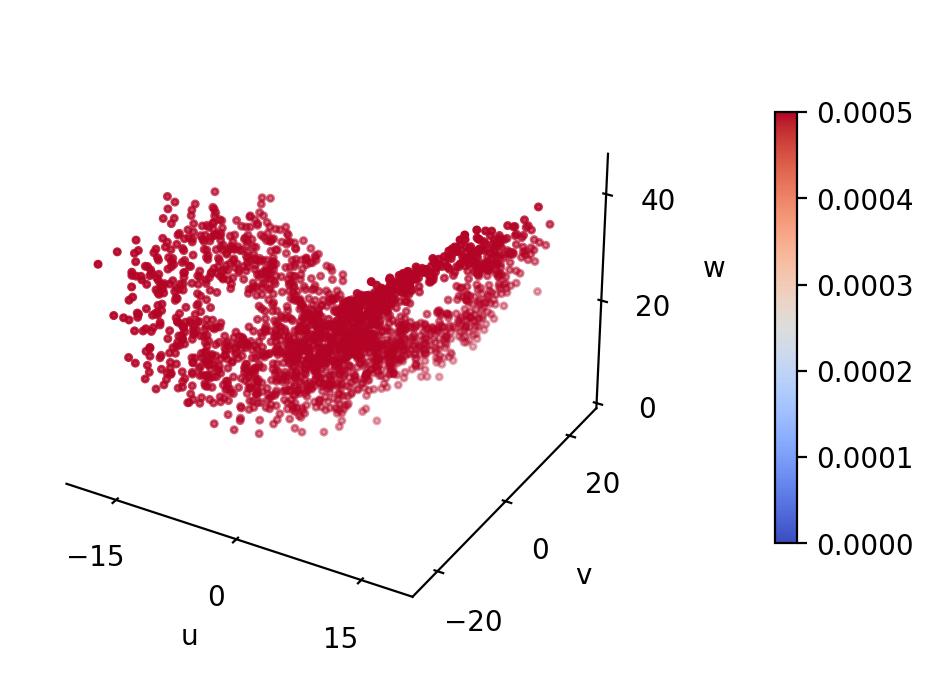}
    \caption{Stationary density of $T^{N,\eps}$ with $N=2000$, $\eps=10$ using the construction \eqref{eq:TFroyland} from \cite{froyland2021spectral} (left) and using the entropic transfer operator (right). Note that the density on the left is not constant while the one on the right is.} 
    \label{fig:spectrum_froyland_2}
\end{figure}

\subsection{Extended dynamic mode decomposition}

Secondly, we compare our approach to extended dynamic mode decomposition (EDMD) \cite{rowley2009spectral,williams2015data}, which is a prominent data based scheme for approximation of the Koopman operator.
Let $\psi_1,\ldots,\psi_J$ be a dictionary of functions in $C(\cX)$. In EDMD, one seeks an approximation $K^N$ of $K$ on $\mathrm{span}(\psi_j)_j$ by enforcing the conditions
\begin{align}
\label{eq:preEDMD}
\psi_j(F(x^N_i)) = (K \psi_j)(x^N_i), \quad i=1,\ldots,N,\ j=1,\ldots,J,
\end{align}
in a least squares sense: define rectangular matrices $A, B \in \R^{N \times J}$ by $A_{i,j}=\psi_j(x_i^N)$ and $B_{i,j}=\psi_j(F(x_i^N))$, and minimize the Frobenius norm
\begin{equation}
\label{eq:EDMD}
\| B - A K^N \|_F^2
\end{equation}
among all $K^N\in\R^{J\times J}$. A least squares loss is preferable since the system \eqref{eq:preEDMD} is often overdetermined ($N\gg J$), the evaluations of $F$ might be subject to noise, and invariance of $\mathrm{span}(\psi_j)_{j}$ under $K$ cannot be expected in general. The solution of \eqref{eq:EDMD} is given in terms of $A$'s pseudo-inverse $A^+$,
\[
K^N = A^+B.
\]

The choice of the dictionary $(\psi_j)_j$ determines how well the spectrum of $K^N$ approximates that of $K$. In the absence of prior information on the eigenvectors of $K$, a popular choice for the dictionary are radial functions centered at the sample points $(x_i^N)_i$. In this case, $J=N$, and $A$ is symmetric. Since $A$ is a square matrix, the minimizer in \eqref{eq:EDMD} can be approximated in a numerically robust way using the modified definition of $A$ by $A_{i,j}=\psi_j(x_i^N)+\sigma \delta_{i,j}$ for some (typically small) regularization $\sigma>0$.

In our numerical experiments, we have chosen radial Gaussians,
\begin{align}
    \label{eq:radial}
\psi_j(x) := \psi^\eps_j(x) := \exp\left(-\frac{c(x_j^N,x)}{\eps}\right).
\end{align}
 We denote the resulting approximation of $K$ by $K^{N,\eps}$. Then, $T^{N,\eps} := A^+ B^\top$ is a corresponding approximation of $T$ on $\mathrm{span} (\delta_{x_i^N})_i$.
This particular choice of dictionary can also be related to the famous kernel trick \cite{williams2016kernel}. For the regularization parameter $\sigma$, we had to choose the rather large value $\sigma=0.1$ in order to stabilize the computation: for smaller $\sigma$, some spectra were not contained in the unit circle anymore and their dependence on $\eps$ was not numerically stable.

\begin{figure}[h]
\centering
    \includegraphics[width=0.4\textwidth]{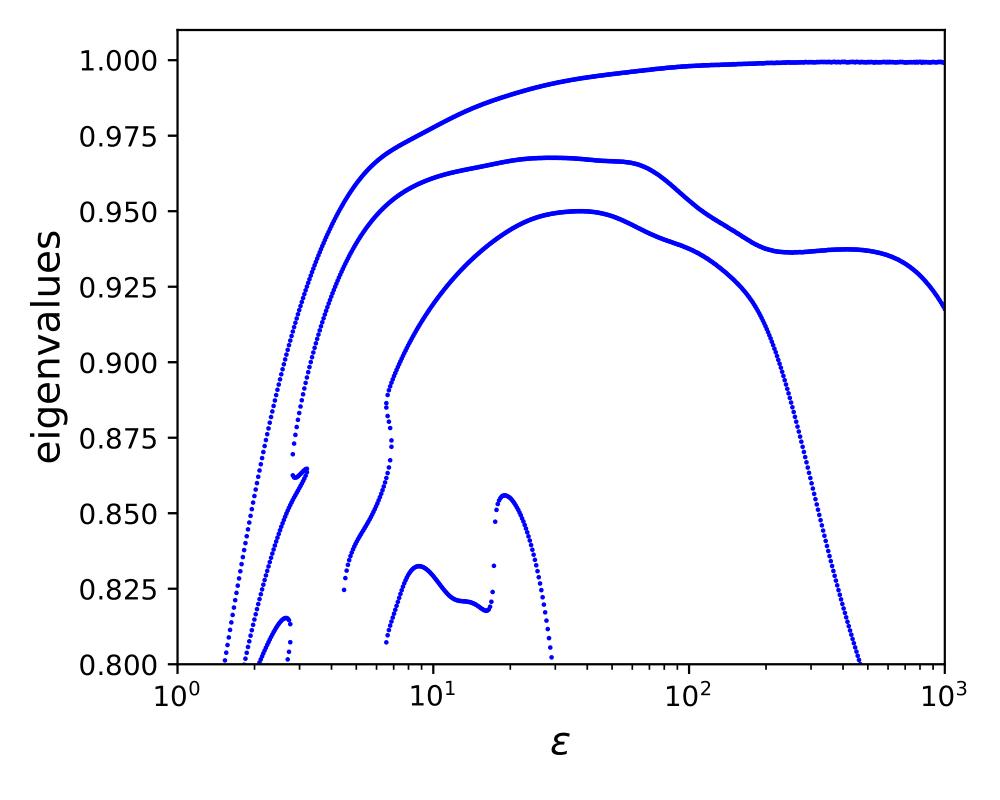}\quad
    \includegraphics[width=0.45\textwidth]{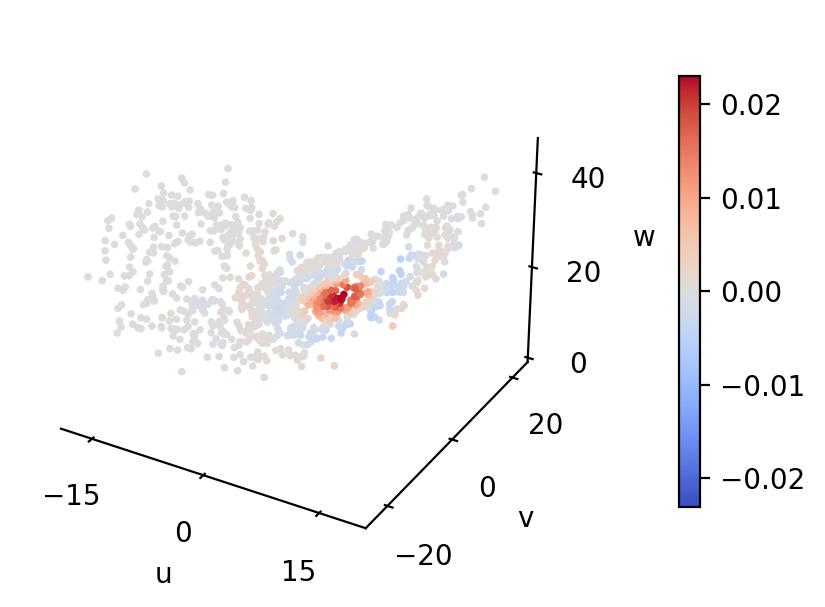}
\caption{Left: the largest real eigenvalues of $K^{N,\eps}$ constructed via EDMD for the Lorenz system with $N=1000$ points as a function of $\eps$. Right: the $i$-th column of the matrix $A^+B^\top$ (cf.~the main text), plotted in color over the point cloud $(x_j^N)_j$.  Note that this column contains negative entries and that thus $A^+B^\top$ is not a Markov matrix.}
\label{fig:LorenzKernelEDMD}
\end{figure}

The results of our numerical experiments are given in Figure~\ref{fig:LorenzKernelEDMD}. The left panel shows the largest real eigenvalues of $T^{N,\eps}$ for $N=1000$ as a function of $\eps$. In an intermediate range of $\eps$, the dominating real eigenvalues of $T^{N,\eps}$ (and the corresponding eigenvectors) encode qualitatively similar almost-invariant structures as the entropic transfer operator, cf.~Section \ref{sec:Method}, although for different values of $\eps$. The figure in the right panel illustrates a disadvantage of EDMD: the operators $K^{N,\eps}$ and $T^{N,\eps}$ are no longer Markov operators, i.e.~one loses the interpretation of jump probabilities between samples.

There is a significant formal similarity between EDMD and our approach: for our choice of Gaussian radial basis functions in \eqref{eq:radial}, the operator $G^{N,\eps}$ is obtained by a row- and column-wise rescaling of the corresponding matrix $B$ through the dual variables $\alpha$ and $\beta$ (see \eqref{eq:d-entropicsolution} and Section \ref{sec:Method}). However, the role played by $\eps$ in EDMD is a slightly different one. There is no blur on the length scale $\eps^{1/2}$ in $K^{N,\eps}$ as for entropic transfer operators. The damping of eigenvalues in $K^{N,\eps}$ depends on a more complex interplay between $\eps$, $N$ and the inversion regularization weight $\sigma$. For example, for $F(x)=x$ and $\sigma=0$, one finds that $A=B$ and thus $K^{N,\eps}$ is the identity operator on the span of the $\psi_j$, independently of the value of $\eps$. Also, the asymptotics in EDMD for small and large values of $\eps$ are different than for entropic transport: as $\eps \to \infty$, all basis functions become increasingly aligned, i.e.~$A_{i,j} \to 1$ and $B_{i,j}\to 1$ for all $i,j$, which implies that all eigenvalues of $A^+B$ approach zero, except for one, which tends to $1$ for the eigenvector with all entries one. For  $\eps\to 0$, we have $B_{i,j} \to 0$ in the generic case where $x^N_i \neq F(x^N_j)$ for all $i,j$, whereas $A^+$ remains bounded, and so all eigenvalues of $K^{N,\eps}$ approach zero.

\subsection{Diffusion maps}

Another conceptually related method are diffusion maps \cite{Coifman2006}. On a pair $(\cX,\mu)$ one defines a Markov operator $T^\eps$ via its kernel
$$t^{\eps}(x,y) := \frac{1}{C(x)}\exp\left(\frac{-c(x,y)}{\eps}\right)$$
with $C(x) := \int \exp(-c(x,y)/\eps) \dd \mu(y)$.
As above, $T^{N,\eps}$ can be approximated by using $\mu^N$ as substitute for $\mu$.
Note that this operator does not explicitely incorporate the map $F$, and thus its spectrum contains no direct information on the dynamics. Implicitly, the invariant measure $\mu$ may of course depend on $F$. Instead, the specturm of this $T^\eps$ contains information about the distribution of $\mu$ and its dominant eigenfunctions can be used to obtain a lower-dimensional embedding of $(\cX,\mu)$. On this embedding one may then apply Ulam's method, which may be unfeasible on the original high-dimensional dynamics, see for instance \cite{KoltaiWeiss2020}.
In comparison, the method of Section \ref{sec:NormalizedGaussians} and entropic transfer operators can be interpreted as mesh-free versions of Ulam's method that do not require a separate dimensionality reduction.

In \cite{SingerPNAS2009} it is proposed to estimate the metric $d$ (which induces the cost $c=d^2$) for the definition of $t^\eps$ from the local diffusivity of a stochastic dynamical system, thus assigning higher emphasis on `slow' coordinates in the subsequent low-dimensional embedding.
While stochastic systems are beyond the scope of the present paper, if such a relevant data-driven metric $d$ were available it could potentially also be used in the definition of the entropic transfer operator. 

\section{Conclusion} 

In this article we have introduced a new method for the regularization, discretization and numerical spectral analysis of transfer operators by using entropic optimal transport.
The analysis of the torus shift map and the numerical experiments indicate that this might be a promising avenue to follow.

Due to its Lagrangian nature, the method is readily applicable to problems with high-dimensional dynamics, in contrast to grid- or set-oriented methods, since no explicit covering of the support of the invariant measure $\mu$ needs to be constructed.
However, the performance of the numerical method depends on whether the discretization captures the features of interest.
The results in Section~\ref{sec:shift map} give a first impression on how ``feature size'', discretization scale and blur scale may be intertwined.
Clearly, features below the blur scale will not be uncovered.
On the other hand, the blur scale needs to be above the discretization scale since otherwise the discrete transfer operator approximation degenerates to a permutation matrix.

A better analytical understanding between the three scales beyond the shift map on the torus would therefore be a relevant open question.
For example, the invariant measure in the torus example has full support and thus the required relation between $N$ and $\eps$ was given by $N=n^d \gg \eps^{-d/2}$, i.e.~the required number of points to resolve a given blur scale increases exponentially with $d$, the so-called curse of dimensionality. In other cases, when the dimension of the support of $\mu$ is lower than the dimension of $\cX$, it seems plausible that $N$ scales based on the dimension of the former, not the latter, as suggested by the alanine dipeptide example in $d=30$. Can this be established rigorously? We refer to \cite{WeedBach19,SpikedWasserstein2019,MunkLCA22} for related results. When the number $N$ of points is small, can we still expect to reliably recover features at least on coarse scales? These questions may be related to the improved sample complexity of entropic optimal transport as compared to the unregularized version \cite{Genevay19a}.
Such results would confirm that our method can extract some spectral information even from coarse approximations of $\mu$, as indicated by some of the numerical experiments.

While here we formulated our approach for an underlying deterministic map, an application to a stochastic model is straightforward: The original transfer operator $T$ will then be defined directly in terms of a stochastic transition function, it can again be combined with entropic smoothing.  In fact, computationally a single realization of some random trajectory of the stochastic model might already be sufficient, if it samples the support of the invariant measure well enough.  

\section{Acknowledgments}

This research was supported by the DFG Collaborative Research Center TRR 109, ``Discretization in Geometry and Dynamics''. BS was supported by the Emmy Noether Programme of the DFG (project number 403056140).

\bibliographystyle{abbrv}
\bibliography{circleshift}

\end{document}